\DeclareMathOperator*{\ba}{ba}
\DeclareMathOperator*{\cl}{cl}
\DeclareMathOperator*{\dist}{dist}
\DeclareMathOperator*{\DIV}{div}
\DeclareMathOperator*{\ext}{ext}
\DeclareMathOperator*{\interior}{int}
\DeclareMathOperator*{\ran}{ran}
\DeclareMathOperator*{\supp}{supp}
\def\e{\varepsilon}
\def\i{\infty}
\def\p{\partial}
\def\N{{\mathbb N}}
\def\R{{\mathbb R}}
\def\AA{{\mathcal A}}
\def\BB{{\mathcal B}}
\def\BD{{\mathcal {BD}}}
\def\BF{{\mathcal {BF}^\i}}
\def\BV{{\mathcal {BV}}}
\def\DD{{\mathcal D}}
\def\DM{{\mathcal {BDM}}}
\def\DMN{{\mathcal {BDM}}_{0}}
\def\HH{{\mathcal H}}
\def\MM{{\mathcal M}}
\def\NN{{\mathcal N}}
\def\WW{{\mathcal W}}
\def\weakast{\overset{\ast}{\rightharpoonup}}
\newtheorem{corollary}{Corollary}[section]
\newtheorem{definition}[corollary]{Definition}
\newtheorem{lemma}[corollary]{Lemma}
\newtheorem{proposition}[corollary]{Proposition}
\newtheorem{theorem}[corollary]{Theorem}
\def\XXint#1#2#3{{\setbox0=\hbox{$#1{#2#3}{\int}$} 
		\vcenter{\hbox{$#2#3$}}\kern-.5\wd0}}
\title{A rough divergence theorem}
\author{Thomas Ruf}
\address[Thomas Ruf]{Institut für Geometrie, TU Dresden, 01069 Dresden, Germany}
\email{thomas.ruf@tu-dresden.de, thomas.ruf.math@web.de}
\begin{document}
	
	\begin{abstract}
		A generalized divergence theorem is established allowing for domains with inner boundaries. The normal trace of a rough integrand is not a Radon measure; rather, the boundary integral is expressed via a surface functional continuous with respect to the uniform convergence of integrands. We provide necessary and sufficient analytic and geometric conditions on the domain for the validity of the theorem. Central to this characterization is the introduction of the space of functions having bounded fluctuation, whose norm is precisely defined so that the divergence theorem holds if and only if the characteristic function $\chi_U$ of the integration domain $U \subset \R^m$ has finite norm.
	\end{abstract}
	
	\maketitle
	
	\section{Introduction}
	
	Many extensions of the divergence theorem admit rough data, either in the integrand or in the domain, cf. the survey \cite{CT2}. Typically, the integration is taken over the measure-theoretic interior, thus removing inner boundaries. Hence, such extensions fail even on domains that possess a simple inner structure. Here, a formulation is given that includes such domains $U \subset \R^m$ whenever the non-exterior boundary has finite 1-codimensional Hausdorff measure, i.e.,
	\begin{equation} \label{eq:pres reg}
		\HH^{m - 1}(\p U \setminus \mathrm{ext}_* U) < \i,
	\end{equation}
	where $\HH^{m - 1}$ is Hausdorff measure, $\p U$ is the topological boundary and $\ext_* U$ the measure theoretic exterior.	The divergence integral is interpreted as a functional on a space of bounded, rough vector fields, whose dual then provides a surface functional that is continuous under uniform convergence. It is concentrated on the boundary in a suitable sense. We also show that \eqref{eq:pres reg} is necessary. It is equivalent to the smooth approximability of the domain and of its indicator function, the approximations proceeding from inside or from below with uniformly bounded variation. The equivalence of these statements forms the main result. In simplified form:
	
	\begin{theorem}
		Let $U \subset \R^m$ have finite measure. The following are equivalent:
		\begin{enumerate}[(i)]
			
			\item $U$ admits a divergence theorem with a surface functional as above;
			
			\item There holds \eqref{eq:pres reg};			 
			
			\item $U$ or $\chi_U$ can be approximated from inside or below by smooth sets or functions with uniformly $L^1$-bounded gradients.
			
		\end{enumerate}
	\end{theorem}
	Remarkably, the condition on the non-exterior boundary does not entail rectifiability. It is analogous to the reduction of the boundary for sets of finite perimeter, except that only the outer part is reduced.
	\\
	
	\emph{Strategy of proof.} The main analytic device for formulating and analyzing our divergence theorem is a seminorm inspired by the $\BV$-seminorm, where smooth test fields are replaced with the space $\DM$ of bounded fields whose divergence is a Radon measure and which need not vanish near the boundary, i.e.,
	$$
		\| f \|_{BF(U) } \coloneqq \sup \left\{ \int_U f \, \mathrm{d} \DIV u \colon u \in \DM(U), \, \| u \|_{L^\i} \le 1 \right\}.
	$$
	This bounded fluctuation or $\BF$-seminorm is finite for a domain indicator $\chi_U$ if and only if the domain satisfies our divergence theorem. The space $\BF$ provides a functional framework for studying approximability: by showing that test functions are dense in a weak topology of $\BF$, we obtain the necessity of approximability for $\chi_U$. The transition from smooth functions to smooth sets follows from the coarea formula, the Sard lemma, and convolution smoothing. The equivalence between the Hausdorff measure condition on the boundary and smooth set approximability rests on \cite[Thm. 3.1]{CLT}, whose argument is extended here to the present framework. The sufficiency of smooth set approximation for the divergence theorem reduces to lower semicontinuity of the $\BF$-seminorm. Since this property is not immediate, we replace the $\BF$-seminorm by a related one whose defining supremum is manifestly lower semicontinuous under strong $L^1$-convergence. The related seminorm is then shown to agree with the $\BF$-seminorm for sufficiently regular functions such as $\chi_U$.
	\\
	
	\emph{Relation to earlier results.}
	Divergence theorems for inner boundaries occur in \cite[Thm. 3.18]{SS}, \cite[Thm. 5.2]{CLT}, \cite[Prop. 4.34(2)]{SS3}. The first assumes stronger regularity on $\p U$ and is equivalent in form to the present divergence theorem; the second first considered \eqref{eq:pres reg} as a sufficient condition in this context, permitting the formally weaker condition $u \in \DM(U) \implies \chi_U u \in DM(\R^m)$. However, it omitted independence of the surface functional from the integrand, constructing a Radon regular normal trace functional for each element of $\DM(U)$; The third assumes only \eqref{eq:pres reg} on the domain and a linear bound for the normal trace functional in terms of the uniform norm of $u \in \DM(U)$ was established. However, by contrast to \cite[Thm. 3.18]{SS}, the surface functional is no longer shown to be independent of the integrand vector field.
	\\
	The $\BF$-seminorm derives from the $\BV$-seminorm as first introduced in \cite{Mi} by admission of test fields in $\DM(U)$. The resulting functional interpretation of a surface term replaces the distributional trace by a continuous linear functional on a space of rough functions. In this respect, the theory generalizes some of Anzellotti pairing theory: if $f$ has finite bounded fluctuation, and $\BD(U)$ denotes the closure of $\DM(U)$ in $L^\i(U)$, then there is a unique functional $\ell_f \in \BD'(U)$ such that
	$$
	\int_U f \, \mathrm{d} \DIV u = \langle \ell_f, u \rangle \quad \forall u \in \DM(U).
	$$
	Thus, the distribution
	$$
	C^\i_c(U) \to \R \colon \varphi \mapsto \langle \ell_f, \varphi u \rangle
	$$
	coincides with the pairing $(u, Df)$ of \cite{CC}. Conversely, the functional $\ell_f$ cannot be recovered from $(u, Df)$ since $\ell_f$ takes the boundary behavior of $f$ into account whereas any $\varphi \in C^\i_c(U)$ vanishes near the boundary. Accordingly, the present paper does not rely on Anzellotti pairing theory, rather presenting an extension.
	\\
	
	\emph{Future directions.} Possible extensions include a formulation of the divergence theorem for domains with fractal or highly irregular boundaries, obtained by replacing the $L^\i$-norm in the $\BF$-seminorm with a fractional-order counterpart. Conversely, one may ask for the broadest class of integrand functions admitting a surface functional continuous under uniform convergence for domains of prescribed regularity, such as of finite perimeter.
	\\
	A general unbounded integrand may admit a distributional normal trace below measure regularity, even on a regular domain, cf. \cite[Ex. 2.5]{Sh3}. The present divergence theorem furnishes a framework to identify those unbounded integrands for which the surface functional can be defined.
	\\
	The weak derivative concept underlying $\BF$ generalizes the framework of Anzellotti pairings; its systematic study could yield simplified calculus rules and fine properties. In particular, the chain rule $D(F \circ u) = F'(u) \, Du$ could become true and arise with less technical effort than its classical counterpart in $\BV$, where the jump part behaves exceptionally, see \cite[Thm. 3.96]{AFP}.
	\\
	
	\emph{Structure of the paper.} \Cref{sec:not}: Notations and preliminaries. \Cref{sec:BF}: Definition of the bounded fluctuation seminorm and space; elementary properties; denseness of test functions; coincidence with an auxiliary seminorm under a sufficient condition. \Cref{sec:divthm}: Proofs of the nontrivial implications of the main theorem. Appendix: Proofs of a direct-sum decomposition for the dual of an $L^\i$-type space and of a generalized Goldstine theorem.
	
	\section{Notation and preliminary material} \label{sec:not}
	
	In addition to standard notation from analysis and measure theory, the following conventions are used throughout. Fix $m \in \N$ and let $U \subset \R^m$ be open. The $\sigma$-algebra of Borel subsets of $U$ is denoted by $\BB(U)$; its completion with respect to the restricted $(m - 1)$-dimensional Hausdorff measure $\HH^{m-1} \resmes U$ is $\bar{\BB}(U)$.
	
	The paper employs the usual function spaces: $L^p(U)$ for $1 \le p \le \i$; $C(U)$; $C^k(U)$ and $C^\i(U)$ for $k \in \N$; their subspaces of compactly supported functions $C_c(U)$, $C^k_c(U)$, $C^\i_c(U)$; the closure of $C_c(U)$ is $C_0(U)$; the space of continuous bounded functions $C_b(U)$; signed Radon measures $\MM(U)$; and functions of bounded variation $\mathcal{BV}(U)$, all taken with their standard Banach space norms.
	
	Of particular interest is the space of bounded divergence-measure fields,
	$$
	\DM(U) \coloneqq \left\{ u \in L^\i(U; \R^m) \colon \DIV u \in \MM(U) \right\},
	$$
	along with the spaces
	\begin{alignat*}{2}
		\DM_c(U) 	& \coloneq \left\{ u \in \DM(U) \colon \supp(u) \subset \subset U \right\}, \\[0.3em]
		\DM_b(U) 	& \coloneq \left\{ u \in \DM(U) \colon \supp(u) \text{ is bounded} \right\}, \\[0.3em]
		\BD(U)      & \coloneq \overline{\DM(U) } \text{ with closure in } \| \cdot \|_\i, \\[0.3em]
		\BD_0(U)    & \coloneq \overline{\DM_c(U) }  \text{ with closure in } \| \cdot \|_\i, \\[0.3em]
		\BD_b(U)    & \coloneq \overline{\DM_b(U) }  \text{ with closure in } \| \cdot \|_\i, \\[0.3em]
		\MM_{\DIV}(U) & \coloneq \left\{ \mu \in \MM(U) \colon \exists u \in \DM(U) \text{ with } \DIV u = \mu \right\}.
	\end{alignat*}
	These are equipped with the essential supremum norm, under which they form Banach spaces, except for $\MM_{\DIV}(U)$, which takes to total variation norm.
	
	If $u \in \DM(U)$, then $\DIV u \ll \HH^{m-1} \resmes U$ \cite[Cor. 2.3.1]{Co}. Any real-valued, $\bar{\BB}(U)$-measurable function coincides $\HH^{m - 1} \resmes U$-almost everywhere with a $\BB(U)$-measurable function by \cite[Lem. A.3]{Ru}. Thus, every bounded, $\bar{\BB}(U)$-measurable function is integrable with respect to $\DIV u$ for all $u \in \DM(U)$. This motivates the preference for $\bar{\BB}(U)$ over stricter notions of measurability. Typically, the role of $\bar{\BB}(U)$ is played by the collection of Lebesgue-measurable sets in similar contexts; however, Lebesgue null sets may be too large to permit modification of a function without affecting integrals against divergence measures, unlike $\HH^{m - 1} \resmes U$-null sets.
	
	For a set $E \subset \R^m$, the measure-theoretic interior, exterior and boundary are denoted by $\mathrm{\interior}_* E$, $\mathrm{\ext}_* E$, and $\p_* E$ respectively, and the reduced boundary by $\p^* E$. The perimeter of $E$ in $U$ is written $P(E; U)$, or simply $P(E)$ if $U = \R^m$. For details on sets of finite perimeter, see \cite{AFP, EG, Fe3, Ma}.
	
	Given locally convex (real) vector spaces $X, Y$ in duality via $\langle \cdot, \cdot \rangle$, $\sigma(X, Y)$ is the weak topology on $X$ induced by $Y$. For $A \subset X$, the indicator function in the sense of convex analysis is $I_A(x) = +\i$ for $x \in A$, zero otherwise. The support function of $S \subset X$ is $s_A(y) = s(A, y) = \sup_{x \in S} \langle x, y \rangle$ for $y \in Y$. For a function $f : X \to [ - \i, + \i ]$, the convex conjugate is $f^*(y) = \sup_{x \in X} \langle x, y \rangle - f(x)$, and $I_A^* = s_A$. Further background in convex analysis may be found in \cite[§3]{IT}.
	
	I record a simple lemma regarding the approximation of a bounded divergence measure field by smooth ones. The proof is omitted since it uses standard arguments involving convolution smoothing and smooth partitions of unity as in the analogous approximation results for Sobolev and bounded variation functions \cite[Thm. 4.2, Thm. 5.3]{EG}.
	
	\begin{lemma} \label{lem:MSinDM}
		For every $u \in \DM(U)$, there is a sequence of functions $u_k \in \DM(U) \cap C^\i(U; \R^m)$ with $\DIV u_k \in L^1(U) \cap C_b(U)$ and $\| u_k \|_\i \le \| u \|_\i$ for all $k \in \N$ such that $u_k$ converges to $u$ weakly* in $L^\i(U)$, $\DIV u_k$ converges to $\DIV u$ weakly* in $\MM(U)$, and $\| \DIV u_k \|(U)$ converges to $\| \DIV u \|(U)$. If moreover $u \in \DM(U) \cap C_b(U)$, then $u_k$ converges to $u$ strongly in $C_b(U)$.
	\end{lemma}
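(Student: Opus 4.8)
The plan is to adapt the classical smooth strict approximation of $BV$ functions, as in \cite[Thm.~5.3]{EG}, to the present setting; the two features beyond the textbook statement are the preservation of the uniform bound $\|u_k\|_\i \le \|u\|_\i$ and the passage from the full gradient to the divergence. Writing $M \coloneqq \|u\|_\i$, it suffices to produce, for each $\e > 0$, a single field $u_\e \in C_\i(U; \R^m)$ with $\DIV u_\e \in L_1(U) \cap C_b(U)$, with $\|u_\e - u\|_{L_1(U)} \le 2\e$, with $\|u_\e\|_\i \le M(1 + 2\e)$, and with $\limsup_{\e \to 0} \|\DIV u_\e\|(U) \le \|\DIV u\|(U)$. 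Dividing by $1 + 2\e$ and setting $\e = 1/k$ then yields the sequence $u_k$, and the three convergences claimed in the statement follow from these estimates as described at the end.

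For the construction I would fix an exhaustion $U_1 \Subset U_2 \Subset \cdots$ with $\bigcup_j U_j = U$ and form the locally finite open cover of multiplicity two given by $V_0 \coloneqq U_2$ and $V_j \coloneqq U_{j + 2} \setminus \overline{U_j}$ for $j \ge 1$. Let $\{ \zeta_j \}$ be a subordinate smooth partition of unity, so that $\sum_j \zeta_j \equiv 1$ and, the sum being locally finite, $\sum_j \nabla \zeta_j \equiv 0$. I mollify each compactly supported piece $\zeta_j u$ at a scale $\eta_j > 0$ chosen small enough that $(\zeta_j u) * \rho_{\eta_j}$ remains supported in $U$, that $\| (\zeta_j u) * \rho_{\eta_j} - \zeta_j u \|_{L_1}$ and $\| (u \cdot \nabla \zeta_j) * \rho_{\eta_j} - u \cdot \nabla \zeta_j \|_{L_1}$ are each at most $\e 2^{-j}$, and that $\eta_j \| \nabla \zeta_j \|_\i \le \e$. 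The candidate $u_\e \coloneqq \sum_j (\zeta_j u) * \rho_{\eta_j}$ is a locally finite sum of smooth fields, hence smooth, and $\| u_\e - u \|_{L_1(U)} \le \sum_j \e 2^{-j} \le 2 \e$. For the uniform bound, since $\rho_{\eta_j} \ge 0$ has unit mass one has $|u_\e(x)| \le M \sum_j \int \zeta_j(x - z) \rho_{\eta_j}(z) \, \mathrm{d} z$; subtracting and adding $\zeta_j(x)$ and using $\sum_j \zeta_j(x) = 1$, that at each $x$ only boundedly many terms are nonzero, and $|\zeta_j(x - z) - \zeta_j(x)| \le \eta_j \| \nabla \zeta_j \|_\i \le \e$, the right-hand side is at most $M(1 + 2\e)$. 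Rescaling by $(1 + 2\e)^{-1} \to 1$ thus restores the exact bound $\le M$ without affecting any limit.

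The divergence is the heart of the matter. From $\DIV \bigl[ (\zeta_j u) * \rho_{\eta_j} \bigr] = (\DIV(\zeta_j u)) * \rho_{\eta_j}$ and the product rule $\DIV(\zeta_j u) = \zeta_j \DIV u + u \cdot \nabla \zeta_j$, valid as measures because $\zeta_j \in C_{\i, c}$, I obtain
\begin{equation*}
	\DIV u_\e = \sum_j (\zeta_j \DIV u) * \rho_{\eta_j} + \sum_j (u \cdot \nabla \zeta_j) * \rho_{\eta_j} .
\end{equation*}
Every summand of the first sum is the mollification of a finite compactly supported measure, hence lies in $L_1(U) \cap C_\i(U)$, and $\sum_j \| (\zeta_j \DIV u) * \rho_{\eta_j} \|_{L_1} \le \sum_j \int \zeta_j \, \mathrm{d} \| \DIV u \| = \| \DIV u \|(U)$ since $\zeta_j \ge 0$ and $\sum_j \zeta_j = 1$. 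The second sum is controlled by the cancellation $\sum_j u \cdot \nabla \zeta_j = u \cdot \nabla \bigl( \sum_j \zeta_j \bigr) = 0$: rewriting each term as $(u \cdot \nabla \zeta_j) * \rho_{\eta_j} - u \cdot \nabla \zeta_j$ and summing the scale estimates gives an $L_1(U)$-norm at most $\sum_j \e 2^{-j} \le 2\e$. Hence $\limsup_\e \| \DIV u_\e \|(U) \le \| \DIV u \|(U)$, and $\DIV u_\e$ is a smooth $L_1$ function, which yields the regularity $\DIV u_\e \in L_1(U) \cap C_b(U)$. This telescoping cancellation — exactly the mechanism behind strict $BV$ approximation — is the step I expect to be the main obstacle, the more so since it must be reconciled with the pointwise bound underlying the rescaling above.

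It remains to collect the convergences. Testing against $\varphi \in C_{\i, c}(U)$ gives $\int \varphi \, \DIV u_\e \, \mathrm{d} x = - \int u_\e \cdot \nabla \varphi \, \mathrm{d} x \to - \int u \cdot \nabla \varphi \, \mathrm{d} x$ by the $L_1$-convergence of $u_\e$, whence $\DIV u_\e \weakast \DIV u$ in $\MM(U)$ using the uniform mass bound just obtained; weak* lower semicontinuity of the total variation then upgrades the $\limsup$ to $\| \DIV u_\e \|(U) \to \| \DIV u \|(U)$. Likewise $u_\e \weakast u$ in $L_\i(U)$ follows from $\| u_\e - u \|_{L_1(U)} \to 0$ and the uniform bound $\| u_\e \|_\i \le M(1 + 2\e)$ by a standard truncation of the test function $g \in L_1(U)$. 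All these statements are stable under division by $1 + 2\e$, so they persist for the rescaled fields, and passing to $\e = 1/k$ delivers the asserted sequence.
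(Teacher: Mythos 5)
Your construction is precisely the one the paper itself appeals to: the paper omits the proof of this lemma, citing the standard partition-of-unity-plus-mollification arguments of \cite[Thm. 4.2, Thm. 5.3]{EG}, and your write-up reproduces that Meyers--Serrin/strict-$BV$ scheme faithfully. The $L_1$ estimates, the telescoping cancellation $\sum_j u \cdot \nabla \zeta_j = 0$, the sup-norm bound up to a harmless factor $1 + C\e$ (whether the multiplicity constant is $2$ or $4$ is immaterial), and the derivation of the three convergences are all correct.

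There is, however, one genuine gap, and it is exactly the point where the lemma goes beyond the textbook statements: the sentence ``$\DIV u_\e$ is a smooth $L_1$ function, which yields the regularity $\DIV u_\e \in L_1(U) \cap C_b(U)$'' is a non sequitur. Smoothness plus integrability on the non-compact set $U$ does not give boundedness, and for your $u_\e$ boundedness can actually fail. At each $x$ only boundedly many summands are active, so $\DIV u_\e$ is continuous and finite everywhere; but the active index $j(x)$ tends to $\i$ as $x \to \p U$, and the individual sup norms blow up: $\| (\zeta_j \DIV u) * \rho_{\eta_j} \|_\i$ is of order $\eta_j^{-m} \| \DIV u \|(V_j)$ (mollification of a \emph{singular} measure), while $\| (u \cdot \nabla \zeta_j) * \rho_{\eta_j} - u \cdot \nabla \zeta_j \|_\i$ is in general of order $\| u \|_\i \| \nabla \zeta_j \|_\i$, since $u$ is merely $L_\i$ and the mollification error is small only in $L_1$, not pointwise. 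Your constraints force $\eta_j \to 0$ (thin shells, $\eta_j \| \nabla \zeta_j \|_\i \le \e$, supports inside $U$), whereas $\| \DIV u \|(V_j)$ need only decay summably, so $\sup_U \left| \DIV u_\e \right| = + \i$ can occur: e.g.\ for $U = B_1(0) \subset \R^2$ and $u = f(r) \hat{r}$ with $f$ bounded, increasing, jumping by $k^{-2}$ on the circles $r = 1 - 2^{-k}$, the divergence mass within distance $\delta$ of $\p U$ decays only like $1 / \log(1/\delta)$, much slower than any power of the admissible mollification scales. This cannot be waved away: the $C_b$ membership is load-bearing in the paper (it defines the class $L$ and is used in the dominated-convergence step of \Cref{prop:lscseminorm} and the $C_b$-duality/Portmanteau step of \Cref{lem:seminormid}). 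Nor is it restored by truncating your sum to $j \le J$: that makes every piece bounded but reinstates the boundary-flux term $u \cdot \nabla \bigl( \sum_{j \le J} \zeta_j \bigr)$, which carries $O(1)$ extra mass and destroys $\| \DIV u_k \|(U) \to \| \DIV u \|(U)$. So some additional mechanism near $\p U$ — discarding or flattening the fine divergence structure there at small strict cost, rather than mollifying it at vanishing scales — is needed to obtain the $C_b$ assertion, and neither your argument nor the cited textbook construction supplies it.
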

	
	For reference, we record one last simple observation.
	
	\begin{lemma} \label{lem:div rich}
		Let $\WW_0(U)$ be the homogeneous Sobolev space arising as the closure of test functions in the Poincaré norm $\| \nabla \cdot \|_{L^1(U) }$. The map $- \nabla \colon\WW_0(U) \to L^1(U)$ is isometric, hence the adjoint operator $\DIV \colon L^\i(U) \to \WW_0'(U)$ is a quotient map. In particular, by the Sobolev embedding theorem, its image contains all compactly supported, bounded, measurable functions.
	\end{lemma}
	
	\section{Functions of bounded fluctuation} \label{sec:BF}
	
	In this section, I define bounded fluctuation functions and study their basic properties. My terminology emphasizes analogies with bounded variation functions wherever possible. Throughout this section, $U \subset \R^m$ is an open set.
	
	\subsection{Definition and structure theory} \label{ssec:BF1}
	
	\begin{definition} \label{def:bndfluc}
		A bounded, $\bar{\BB}(U)$-measurable function $f \colon U \to \R$ has \emph{bounded fluctuation} in $U$ if for every $u \in \DM(U)$, the \emph{bounded fluctuation seminorm}
		\begin{equation} \label{eq:bfseminorm}
			 \| f \|_{BF(U) } \coloneqq \sup \left\{ \int_U f \, \mathrm{d} \DIV u \colon u \in \DM(U), \, \| u \|_{L^\i(U; \, \R^m) } \le 1 \right\}
		\end{equation}
		is finite. The vector space of such functions $f$ is denoted by
		$$
		\BF(U).
		$$
		A Borel measurable set $E \subset U$ has \emph{finite anexometer} in $U$ if the indicator function $\chi_E$ has bounded fluctuation in $U$.	If $U$ has finite anexometer in itself, we simply say that $U$ has finite anexometer.
	\end{definition}
	
	\emph{Remarks}:
	
	\begin{enumerate}[(i)]
		
		\item Short-hand notations like $\| \cdot \|_{BF}$ instead of $\| \cdot \|_{BF(U) }$ are used if the underlying set is clear from context.
		
		\item The bounded fluctuation seminorm being a supremum of linear functions, it is sublinear. The set of functions being a vector space, $\| \cdot \|_{BF}$ is even and non-negative, rendering it a seminorm on the vector space where it is finite.
		
		\item Obviously, the BV-seminorm $\| D f \|(U)$ is dominated by $\| f \|_{BF(U) }$. In particular, every $f \in \BF(U)$ has a distributional gradient in $\MM(U)$. A refinement relating to the distributional derivative of bounded fluctuation functions will be proved in \Cref{lem:muf}. The converse inequality of seminorms may fail. Indeed, below \Cref{thm:main}, I present an example of a bounded set having infinite anexometer but finite perimeter.
		\\
		
	\end{enumerate}
	
	If a divergence theorem of the form
	$$
	\exists \mu_{\chi_U} \in \BD'(U) \colon \int_U \mathrm{d} \DIV u = \langle \mu_{\chi_U}, u \rangle \quad \forall u \in \DM(U)
	$$	
	is to hold, then the set $U$ necessarily has finite anexometer. The same condition suffices by \Cref{lem:muf} below. More generally, \Cref{lem:muf} tells us that a function of bounded fluctuation not only has a distributional Radon measure derivative, but also enjoys a well-defined boundary behavior encoded in a generalized derivative possessing a unique additive decomposition where the boundary behavior is isolated in a single addend of the derivative. This is analogous to the classical integration by parts formula
	\begin{equation} \label{eq:ibpclass}
		\int_U f \DIV g \, \mathrm{d}x = \int_{\p U} f g \, \mathrm{d} \HH^{m - 1} - \int_u \nabla f \cdot g \, \mathrm{d}x
	\end{equation}
	for a smooth domain $U$ and smooth functions $f \colon \R \to \R$, $g \colon \R^m \to \R^m$ if we regard the trace $f \, \mathrm{d} \HH^{m - 1} \resmes \p U$ as pertinent to the derivative.
	
	\begin{lemma} \label{lem:muf}
		For every $f \in \BF(U)$, there is a unique, continuous, linear functional $\mu_f \in \BD'(U)$ satisfying
		\begin{equation} \label{eq:mufgauß}
			- \int_U f \, \mathrm{d} \DIV u = \langle \mu_f, u \rangle \quad \forall u \in \DM(U), \quad \| f \|_{BF} = \| \mu_f \|.
		\end{equation}
		In particular, decomposing $\mu_f = D f + B f + I f$ with $Df \in \BD'_0(U)$, $Bf \in \BD'_{\p U}(U)$, and $I f \in \BD'_\i(U)$ according to \Cref{cor:DBdeco}, there holds
		\begin{equation} \label{eq:mufnorm}
			\| f \|_{BF} = \| Df \| + \| Bf \| + \| I f \|.
		\end{equation}
		Moreover, the restriction of $Df \in \BD'_0(U)$ to the space of test functions $C^\i_c(U)$ induces a Radon measure that is a distributional derivative of $f$.
	\end{lemma}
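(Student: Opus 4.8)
\emph{Existence, uniqueness, and the norm identity.} The plan is to view the assignment $u \mapsto -\int_U f \, \mathrm{d}\DIV u$ as a bounded linear functional on $\DM(U)$ and to extend it to the uniform closure $\DB(U)$ by continuity. Concretely, I would set $\Lambda(u) \coloneqq -\int_U f \, \mathrm{d}\DIV u$ for $u \in \DM(U)$; this is well defined and linear because $f$ is bounded and $\bar{\SS}(U)$-measurable, cf. the remark opening \Cref{sec:not}. Since $\DM(U)$ is a linear space, replacing $u$ by $-u$ in \eqref{eq:bfseminorm} shows $|\Lambda(u)| \le \| f \|_{BF} \, \| u \|_\i$ for every $u$, and that the operator norm of $\Lambda$ on $(\DM(U), \| \cdot \|_\i)$ equals $\| f \|_{BF}$. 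As $\DB(U)$ is by definition the $\| \cdot \|_\i$-closure of $\DM(U)$, the functional $\Lambda$ admits a unique continuous linear extension $\mu_f \in \DB'(U)$, and restriction to a norm-dense subspace preserves the dual norm, so $\| \mu_f \| = \| f \|_{BF}$. Uniqueness is immediate from density. This establishes \eqref{eq:mufgauß}.

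\emph{The additive norm decomposition.} Next I would apply the canonical direct sum decomposition of \Cref{cor:DBdeco} to $\mu_f$, writing $\mu_f = Df + Bf + If$ with $Df \in \DB'_0(U)$ the interior part, $Bf$ the boundary part, and $If$ the part at infinity. Since that decomposition is an $\ell_1$-type direct sum, the dual norm is additive across the three summands, whence
\begin{equation*}
\| f \|_{BF} = \| \mu_f \| = \| Df \| + \| Bf \| + \| If \|,
\end{equation*}
which is \eqref{eq:mufnorm}.

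\emph{Identification of the interior part.} Finally, to identify $Df$ I would test against compactly supported smooth fields. Every $g \in C_{1,c}(U; \R^m)$ lies in $\DM(U)$, since it is bounded and $\DIV g \in C_c(U) \subset L_1(U)$ defines a finite Radon measure; hence \eqref{eq:mufgauß} gives $\langle \mu_f, g \rangle = -\int_U f \, \DIV g \, \mathrm{d}x$. Because $g$ is supported in a compact subset of $U$, the summands $Bf$ and $If$, being concentrated on $\p U$ and at infinity, annihilate $g$ by the characterization of the interior part in \Cref{cor:DBdeco}; therefore
\begin{equation*}
\langle Df, g \rangle = - \int_U f \, \DIV g \, \mathrm{d}x \quad \forall g \in C_{1,c}(U; \R^m).
\end{equation*}
The right-hand side is exactly the pairing of the distributional gradient of $f$ with $g$. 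By \eqref{eq:BFintoBV} we have $\| f \|_{BV} \le \| f \|_{BF} < \i$, which says precisely that this distributional gradient is a finite Radon measure; thus the restriction of $Df$ to $C_{1,c}(U; \R^m)$ is induced by a Radon measure that is a distributional derivative of $f$.

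\emph{Main obstacle.} The routine functional analysis is the first paragraph; the genuine content sits in the interplay with \Cref{cor:DBdeco}. The delicate point is the claim that $Bf$ and $If$ vanish on fields supported in a compact subset of $U$: this rests on the precise construction underlying the decomposition, namely on which compactification of $U$ the underlying finitely additive measures are carried and how ``concentrated on $\p U$'' and ``concentrated at infinity'' are defined. I expect the bulk of the work to lie in arranging \Cref{cor:DBdeco} so that the interior summand is characterized exactly by its agreement with interior measures on compactly supported test fields, after which the identification above becomes automatic.
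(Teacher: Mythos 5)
Your proposal is correct and follows essentially the same route as the paper's own proof: extend $u \mapsto -\int_U f \, \mathrm{d} \DIV u$ from the dense subspace $\DM(U)$ to $\DB(U)$ by uniform continuity, invoke \Cref{cor:DBdeco} for the $\oplus_1$-additivity of the norms, and identify $Df$ by testing against compactly supported fields, on which $Bf$ and $If$ vanish because they lie in the polars of $\DB_0(U)$ and $\DB_{0,\i}(U)$ by construction. The only cosmetic difference is that you justify the Radon measure regularity of the distributional gradient via \eqref{eq:BFintoBV}, a point the paper leaves implicit.
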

	
	Informally, one could phrase the statement $Df \in \BD'_0(U)$ by saying that $Df$ is concentrated on the interior of $U$, $Bf \in \BD'_{\p U}(U)$ as $Bf$ being concentrated on the boundary $\p U$, and $I f \in \BD'_\i(U)$ as $If$ being concentrated at infinity.
	
	\begin{proof}
		Consider the linear functional
		$$
		L_f \colon \DM(U) \to \R \colon u \mapsto - \int_U f \, \mathrm{d} \DIV u.
		$$
		The functional $L_f$ is well-defined because $f$ is $\bar{\BB}(U)$-measurable and of bounded fluctuation. Since $\| f \|_{BF} < + \i$, the functional is Lipschitz continuous with respect to the essentially uniform norm $\| \cdot \|_\i$ so that it may be uniquely extended to a linear and continuous functional $\mu_f$ on $\BD(U)$. By construction, the first of \eqref{eq:mufgauß} is true. The second follows from the first by taking the supremum over all $u \in \DM(U)$ such that $\| u \|_\i \le 1$. Moreover, if $\mu_f = Df + Bf + If$, then \eqref{eq:mufnorm} is immediate by \Cref{cor:DBdeco}. Finally, if $Df \in \BD'_0(U)$ and $u \in \DM(U)$ has compact support in $U$, then
		$$
		- \int_U f \, \mathrm{d} \DIV u = \langle \mu_f, u \rangle = \langle Df, u \rangle.
		$$
		Specializing to test functions $u \in C^\i_c(U; \R^m)$, the addendum follows.
	\end{proof}
	
	\begin{corollary} \label{cor:muf}
		Let $U$ have finite anexometer. Then $D \chi_U = 0$, i.e., the surface functional $\mu_{\chi_U}$ is concentrated on the boundary $\p U$ and, possibly, at infinity, but vanishes in the interior of $U$.
	\end{corollary}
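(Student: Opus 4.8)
The plan is to exploit the fact that, as a function on the open set $U$, the indicator $\chi_U$ is identically equal to $1$, so that its distributional gradient vanishes, and to combine this with the structural decomposition of \Cref{cor:DBdeco} isolating the interior part $D\chi_U$. By hypothesis $U$ has finite anexometer, i.e.\ $\chi_U \in \BF(U)$, so \Cref{lem:muf} furnishes $\mu_{\chi_U} = D\chi_U + B\chi_U + I\chi_U$ with $D\chi_U \in \DB'_0(U)$, and tells us that the restriction of $D\chi_U$ to $C_{1,c}(U;\R^m)$ is a distributional derivative of $\chi_U$. Since $\chi_U \equiv 1$ on $U$, that distributional derivative is the zero measure; this already suggests $D\chi_U$ is trivial, but the vanishing must be promoted from $C_{1,c}(U;\R^m)$ to all of $\DB'_0(U)$.

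To do this cleanly I would first record that $\DIV u(U) = 0$ for every $u \in \DM(U)$ with compact support in $U$. Indeed, choosing $\varphi \in C_{1,c}(U)$ with $\varphi \equiv 1$ on a neighborhood of $\supp u$ and using that $\DIV u$ vanishes on the open set where $u$ does, so that $\supp \DIV u \subseteq \supp u$, one gets $\DIV u(U) = \int_U \varphi \, \mathrm{d} \DIV u = - \int_U u \cdot \nabla \varphi \, \mathrm{d}x = 0$, the last integrand vanishing identically because $\nabla \varphi = 0$ on $\supp u$ and $u = 0$ off it. Consequently, for every compactly supported $u \in \DM(U)$ the defining identity \eqref{eq:mufgauß} yields $\langle \mu_{\chi_U}, u \rangle = - \int_U \chi_U \, \mathrm{d} \DIV u = - \DIV u(U) = 0$, and by the computation in the proof of \Cref{lem:muf} (where $B\chi_U$ and $I\chi_U$ drop out on interior fields) this reads $\langle D\chi_U, u \rangle = 0$ for all such $u$.

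It then remains to conclude $D\chi_U = 0$ in $\DB'_0(U)$ from its vanishing on every compactly supported $\DM$-field. This is where I would invoke \Cref{cor:DBdeco}: by the very meaning of being concentrated on the interior, an element of $\DB'_0(U)$ is determined by its action on fields supported compactly in $U$, so vanishing on all of them forces it to vanish identically. This final implication is the only delicate point, as it relies on the precise characterization of $\DB'_0(U)$ supplied by the direct sum decomposition; everything preceding it is a direct computation. The resulting identity $D\chi_U = 0$ is exactly the assertion that $\mu_{\chi_U}$ is concentrated on $\p U$ and possibly at infinity while vanishing in the interior.
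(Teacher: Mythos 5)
Your proof is correct, and it reaches the conclusion by a genuinely different — and more elementary — route than the paper. Both arguments share the same structural reduction: via \Cref{lem:muf} and \Cref{cor:DBdeco}, the claim $D\chi_U = 0$ amounts to showing $\int_U \mathrm{d}\DIV u = 0$ for every compactly supported $u \in \DM(U)$, together with the fact that such fields are dense in $\DB_0(U)$ (your closing step uses this tacitly; it follows from the same cutoff device you already employ, since multiplying a uniformly convergent sequence $u_k \in \DM(U)$ by a fixed $\varphi \in C_{\i,c}(U)$ equal to $1$ on the limit's support produces compactly supported $\DM$-approximants). Where you diverge is in proving the vanishing itself. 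The paper invokes \Cref{lem:MSinDM} to approximate $u$ by smooth compactly supported fields $u_k$ and then must upgrade weak* convergence of $\DIV u_k$ to convergence of the total masses $\DIV u_k(U) \to \DIV u(U)$; this forces a subsequence extraction so that the positive and negative parts $\left[ \DIV u_k \right]^\pm$ converge separately, an appeal to lower semicontinuity and convergence of total variations, and finally the classical identity $\int_U \DIV u_k \, \mathrm{d}x = 0$ for smooth compactly supported fields. Your argument bypasses all of that: from $\supp \DIV u \subseteq \supp u$ and the distributional identity tested against $\varphi \in C_{1,c}(U)$ with $\varphi \equiv 1$ near $\supp u$ (valid by mollifying $\varphi$, since $\DIV u$ is a finite measure and $u \in L_\i$), you get $\DIV u(U) = \int_U \varphi \, \mathrm{d}\DIV u = -\int_U u \cdot \nabla \varphi \, \mathrm{d}x = 0$ in one stroke, with no approximation of $u$, no subsequences, and no mass-convergence argument. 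The paper's route has the merit of recycling machinery (\Cref{lem:MSinDM}) used elsewhere in the text, but yours is shorter, avoids the most delicate analytic step of the paper's proof, and isolates the only genuinely structural ingredient — the identification of $\DB'_0(U)$ inside $\DB'(U)$ via the projector of \Cref{cor:DBdeco} — exactly where it is needed.
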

	
	\begin{proof}
		By \Cref{lem:muf}, it is to be proved that
		\begin{equation} \label{eq:2bproved}
			- \int_U \, \mathrm{d} \DIV u = \langle \mu_{\chi_U}, u \rangle = 0
		\end{equation}
		for every $u \in \DMN(U)$, where $\DMN(U)$ is the closure of compactly supported elements in $\DM(U)$. By continuity, it suffices to consider $u$ having compact support. By \Cref{lem:MSinDM}, there is a sequence of regular vector fields $u_k$ satisfying
		$$
		u_k \in \DM(U) \cap C^\i(U; \R^m), \quad \DIV u_k \in L^1(U) \cap C_b(U)
		$$
		and such that $\DIV u_k$ converges to $\DIV u$ weakly* in $\MM(U)$ and $\| \DIV u_k \|(U)$ converges to $\| \DIV u \|(U)$. In addition, since $u$ has compact support in $U$, multiplying $u_k$ with a smooth cut-off function, $u_k$ can be arranged to vanish outside of a compact subset of $U$. Up to extracting a further subsequence, the positive and negative parts $\left[ \DIV u_k \right]^\pm$ converge weakly* to limits $\mu^\pm$. By weak* lower semicontinuity of the norm, $\| \left[ \DIV u_k \right]^\pm \|(U)$ converge to $\| \mu^\pm \|(U)$. But then, we have
		$$
		\DIV u_k (U) = \left[ \DIV u_k \right]^+ (U) - \left[ \DIV u_k \right]^- (U) \to \mu^+(U) - \mu^-(U) = \DIV u (U).
		$$
		Hence, it suffices to consider $u \in C^\i_c(U; \R^m)$, for which \eqref{eq:2bproved} is known to hold.
	\end{proof}
	
	It is interesting to observe that if $U$ has finite anexometer and $f \in \BF(U)$, then $B f$ and $I f$ depend on $B \chi_U$ and $I \chi_U$ in a transparent manner since studying these addends for any $f$ then essentially reduces to understanding $\mu_{\chi_U}$. This is recorded in the next corollary.
	
	\begin{corollary} \label{cor:muf2}
		Let $U$ have finite anexometer and $f \in \BF(U)$. Then
		$$
		\mu_f = Df + f \mu_{\chi_U} = Df + f B \chi_U + f I \chi_U \quad \text{ in } \BD'(U).
		$$
	\end{corollary}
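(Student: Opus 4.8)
The plan is to deduce everything from the single Leibniz-type identity $\mu_f = Df + f\mu_{\chi_U}$ in $\DB'(U)$, with the middle term of the asserted chain understood as $f\mu_{\chi_U}$. Granting this identity, the second equality follows at once from \Cref{cor:muf}: since $D\chi_U = 0$ there, one has $\mu_{\chi_U} = B\chi_U + I\chi_U$ and therefore $f\mu_{\chi_U} = fB\chi_U + fI\chi_U$. Moreover, multiplication by the bounded $\bar{\SS}(U)$-measurable function $f$ leaves invariant the class of finitely additive measures concentrated on $\p U$ and the class of those concentrated at infinity, so that $fB\chi_U$ and $fI\chi_U$ lie in the boundary and infinity summands of the decomposition of \Cref{cor:DBdeco}, while $Df \in \DB'_0(U)$ lies in the interior summand; by the uniqueness of that direct sum this simultaneously identifies $Bf = fB\chi_U$ and $If = fI\chi_U$.

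To establish $\mu_f = Df + f\mu_{\chi_U}$, I note that both sides are continuous on $\DB(U)$, so by density it is enough to test against $u \in \DM(U)$ and show $-\int_U f\,\mathrm{d}\DIV u = \langle Df, u\rangle + \langle f\mu_{\chi_U}, u\rangle$. First, $\langle f\mu_{\chi_U}, u\rangle = \langle\mu_{\chi_U}, fu\rangle$ by the very definition of the product of $f$ with the finitely additive measure representing $\mu_{\chi_U}$. The substance is then a product rule: for $f \in \BF(U)$, which lies in a class of $BV(U)$ by \eqref{eq:BFintoBV}, and $u \in \DM(U)$, the field $fu$ again lies in $\DM(U)$ and satisfies $\DIV(fu) = f\,\DIV u + u\cdot Df$ as measures, the cross term being the pairing of $u$ with the distributional derivative $Df$ supplied by \Cref{lem:muf}. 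Taking the total mass over $U$ and using the defining relation of $\mu_{\chi_U}$ from \Cref{lem:muf} evaluated at $fu \in \DM(U)$, namely $\langle\mu_{\chi_U}, fu\rangle = -\int_U \mathrm{d}\DIV(fu)$, one obtains $\langle f\mu_{\chi_U}, u\rangle = -\int_U f\,\mathrm{d}\DIV u - \langle Df, u\rangle$, which is exactly the desired identity after rearranging. To make the product rule and the identification of the cross term with $\langle Df, u\rangle$ rigorous, I would approximate $u$ by the smooth fields of \Cref{lem:MSinDM} and $f$ by mollification, for which the identity is the classical integration by parts \eqref{eq:ibpclass}, and then pass to the limit using the convergence of the divergence masses.

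The principal difficulty is precisely this product rule for the rough pair $(f,u)$ together with the correct handling of representatives. One must show that $fu$ has a finite Radon measure divergence and that the cross term is carried exactly by $u\cdot Df$; this is delicate because $f$ is only determined $\HH^{m-1}\resmes U$-almost everywhere whereas $\DIV u \ll \HH^{m-1}\resmes U$ may concentrate on the non-exterior boundary $\p U \setminus \ext_* U$, so the $\bar{\SS}(U)$-representative entering $\int_U f\,\mathrm{d}\DIV u$ and the precise representative entering the Leibniz rule must be reconciled $\DIV u$-almost everywhere, and the pairing $\int_U u\cdot\mathrm{d}Df$ must be read in the weak, Anzellotti-type sense embodied by the functional $\langle Df, u\rangle$. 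A subsidiary point, needed in the reduction, is the well-definedness of $f\mu_{\chi_U}$ in $\DB'(U)$ and the stability of the three components of \Cref{cor:DBdeco} under multiplication by $f$; this follows by representing $\mu_{\chi_U}$ through a finitely additive measure and noting that multiplication by a bounded measurable function preserves both continuity in the uniform topology and the sets on which the measure is concentrated. Once these are in place, the limit passage in the approximation is routine given the mass convergence in \Cref{lem:MSinDM}.
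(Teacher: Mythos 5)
Your skeleton agrees with the paper's: read $f D\chi_U$ as $f\mu_{\chi_U}$ (this is indeed the intended meaning, since otherwise $D\chi_U = 0$ from \Cref{cor:muf} would trivialize the statement), prove the Leibniz identity $\mu_f = Df + f\mu_{\chi_U}$ in $\DB'(U)$, and expand $f\mu_{\chi_U} = fB\chi_U + fI\chi_U$ via \Cref{cor:muf}. The genuine gap is in how you propose to prove the Leibniz identity. You reduce it to a product rule for the rough pair $(f,u)$, namely $\DIV(fu) = f\,\DIV u + u\cdot Df$ with cross term of total mass $\langle Df, u\rangle$, to be obtained by mollifying $f$, smoothing $u$ via \Cref{lem:MSinDM}, and a limit passage you call routine. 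That passage is not routine, and for general $f \in \BF(U)$ it cannot work as described, for a structural reason: every object mollification produces ($f_\e = f\ast\rho_\e$, $\nabla f_\e$, the distribution $\DIV(fu)$) depends only on the Lebesgue class of $f$, whereas the two quantities you must recover, $\int_U f\,\mathrm{d}\DIV u$ and $\langle Df,u\rangle$, depend on the actual $\bar{\SS}(U)$-representative up to $\HH^{m-1}$-null sets only --- this is precisely why the paper insists on $\bar{\SS}(U)$-measurable functions rather than Lebesgue classes. Since $\DIV u \ll \HH^{m-1}\resmes U$ may charge Lebesgue-null sets, one has $\lim_{\e\to 0}\int_U f_\e\,\mathrm{d}\DIV u = \int_U f^*\,\mathrm{d}\DIV u$ for a precise representative $f^*$, which differs from $\int_U f\,\mathrm{d}\DIV u$ in general; correspondingly the mollification limit of the cross term, when it exists at all (its existence is itself the nontrivial Anzellotti/Chen--Frid pairing problem, as $u$ is only $L_\i$ and $\nabla f_\e$ converges only weakly* as measures), is a representative-blind pairing and not the functional $\langle Df,u\rangle$, which for non-compactly supported $u$ is only defined through the norm-preserving extension of \Cref{cor:DBdeco} and hence needs a cutoff/projector argument even to be identified. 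You flag this reconciliation of representatives as ``delicate'' but offer no mechanism for it, and mollification inherently cannot supply one; also the separate limit $k \to \i$ fails, since $\DIV u_k \weakast \DIV u$ does not allow passage to the limit in $\int_U f\,\mathrm{d}\DIV u_k$ for merely bounded $\bar{\SS}(U)$-measurable $f$ (compare \Cref{lem:seminormid}, which needs $\HH^{m-1}(D_f)=0$ exactly for this reason).

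The paper's proof is built to avoid all of this: neither $u$ nor $f$ is ever approximated. One applies the elementary product rule only to $\DIV(\varphi_n u)$ for smooth, compactly supported cutoffs $\varphi_n$ converging to $\chi_U$ on compact sets, uses the compact support of $\varphi_n u$ together with \Cref{lem:muf} and \Cref{cor:DBdeco} to write $-\int_U f\,\mathrm{d}\DIV(\varphi_n u) = \langle Df, \varphi_n u\rangle$, and then passes to the limit: $\langle Df,\varphi_n u\rangle \to \langle Df, u\rangle$ by \Cref{lem:unifdeco} and \Cref{cor:DBdeco}; $\int_U f\varphi_n\,\mathrm{d}\DIV u \to \int_U f\,\mathrm{d}\DIV u$ by dominated convergence; and the remaining term $\int_U f\nabla\varphi_n\cdot u\,\mathrm{d}x = -\int_U \varphi_n\,\mathrm{d}\DIV(fu)$ converges to $\langle \mu_{\chi_U}, fu\rangle$ by the defining relation of \Cref{lem:muf} applied to the field $fu \in \DM(U)$, which is supplied by \Cref{cor:extdom} through a soft estimate rather than any product formula. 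In this argument $f$ is only ever integrated against $\DIV u$ and $\DIV(\varphi_n u)$ for the fixed $u$, so no representative reconciliation, no Anzellotti pairing, and no convergence of rough integrals against weak*-convergent measures ever arises. Repairing your proposal amounts to replacing its ``principal difficulty'' by exactly this cutoff argument, i.e., to reproducing the paper's proof rather than giving an alternative to it.
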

	
	\begin{proof}
		If $\varphi \in C^1_c(\R^m)$ and $u \in \DM(U)$ then
		$$
		\DIV \left( \varphi u \right) = u \cdot \nabla \varphi + \varphi \DIV u
		$$
		in the sense of distributions by \Cref{lem:MSinDM}. Therefore, we have
		\begin{equation} \label{eq:scramt}
			\begin{aligned}
				\langle \DIV \left( f u\right), \varphi \rangle
				= \int_U \varphi \, \mathrm{d} \DIV \left( f u \right)
				& = - \int_U f \nabla \varphi \cdot u \, \mathrm{d}x \\
				& = - \int_U f \, \DIV \left( \varphi u \right) + \int_U f \varphi \, \mathrm{d} \DIV u \\
				& = \langle D f, \varphi u \rangle + \int_U f \varphi \, \mathrm{d} \DIV u.
			\end{aligned}
		\end{equation}
		In particular, the divergence $\DIV \left( f u \right)$ belongs to $\MM(\R^m)$ so that $fu \in \DM(U)$. The compact support of $\varphi u$ was used in the last step. We rearrange the terms in \eqref{eq:scramt} to find
		\begin{equation} \label{eq:unscramt}
			- \int_U f \varphi \, \mathrm{d} \DIV u = \langle D f, \varphi u \rangle - \int_U \varphi \, \mathrm{d} \DIV \left( f u \right).
		\end{equation}
		Choosing for $\varphi$ in \eqref{eq:unscramt} a sequence $\varphi_n \in C^1_c(U; [0, 1] )$ that converges to $\chi_U$ uniformly on compact subsets of $U$, using that $Df \in \BD'_0(U)$ is concentrated on the interior of $U$ in the sense specified by the possibility of approximating the projector onto $\BD'_0(U)$ as described in the addendum of \Cref{cor:DBdeco}, we obtain
		$$
		\langle D f, \varphi_n u \rangle \to \langle D f, u \rangle.
		$$
		Thus, sending $n \to + \i$, we find
		\begin{gather*}
			\langle \mu_f, u \rangle = - \int_U f \, \mathrm{d} \DIV u = \langle D f, u \rangle + \langle \mu_{\chi_U}, f u \rangle \\
			= \langle D f, u \rangle + \langle B \chi_U, f u \rangle + \langle I \chi_U, f u \rangle \quad \forall u \in \DM(U),
		\end{gather*}
		where \Cref{cor:muf} entered in the last step.
	\end{proof}
	
	Together with \Cref{thm:main}, the next corollary generalizes \cite[Thm. 4.2]{CLT}.
	
	\begin{corollary} \label{cor:extdom}
		Let $f \in \BF(U)$ and $u \in \DM(U)$. Then $f u \in \DM(\R^m)$ if $f u$ is extended to $\R^m \setminus U$ by zero. In particular, if $\chi_U \in \BF(U)$, then $U$ is a null extension domain for $\DM(U)$, i.e., there holds $\chi_U u \in \DM(\R^m)$ whenever $u \in \DM(U)$. 
	\end{corollary}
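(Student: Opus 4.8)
The plan is to verify directly the two defining properties of $\DM(\R^m)$ for the null extension of $fu$. That $fu \in L_\i(\R^m; \R^m)$ is immediate, since $\| fu \|_{L_\i(\R^m)} \le \| f \|_\i \| u \|_\i < \i$ and $fu$ vanishes identically outside $U$. The substance of the proof is therefore to show that the distributional divergence $\DIV(fu)$ on all of $\R^m$ is a finite Radon measure. By the Riesz representation theorem it suffices to produce a constant $C < \i$ with $| \langle \DIV(fu), \varphi \rangle | \le C \| \varphi \|_\i$ for every $\varphi \in C_{\i, c}(\R^m)$, because then the linear form extends continuously to $C_0(\R^m)$ and is represented by an element of $\MM(\R^m)$.

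First I would fix $\varphi \in C_{\i, c}(\R^m)$ and note that, since $fu \equiv 0$ outside $U$, one has $\langle \DIV(fu), \varphi \rangle = - \int_{\R^m} fu \cdot \nabla \varphi \, \mathrm{d}x = - \int_U f \, (u \cdot \nabla \varphi) \, \mathrm{d}x$. The key observation is that the restriction $\varphi u$ belongs to $\DM(U)$: it is bounded with $\| \varphi u \|_\i \le \| \varphi \|_\i \| u \|_\i$, and by the product rule $\DIV(\varphi u) = u \cdot \nabla \varphi + \varphi \DIV u$ (cf. the proof of \Cref{cor:muf2}, which invokes \Cref{lem:MSinDM}) its divergence is the finite Radon measure $(u \cdot \nabla \varphi)\, \mathcal{L}^m + \varphi \, \DIV u$ on $U$. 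Here $u \cdot \nabla \varphi \in L_1(U)$ because $\nabla \varphi$ is supported in a bounded set and $u \in L_\i$, while $\varphi \, \DIV u \in \MM(U)$ since $\varphi$ is bounded and $\DIV u \in \MM(U)$.

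Next I would integrate $f$ against this measure and rearrange, obtaining $\int_U f \, (u \cdot \nabla \varphi) \, \mathrm{d}x = \int_U f \, \mathrm{d} \DIV(\varphi u) - \int_U f \varphi \, \mathrm{d} \DIV u$. The first term is controlled by the bounded fluctuation seminorm: since $\varphi u \in \DM(U)$, the defining supremum \eqref{eq:bfseminorm} applied to $\pm \varphi u / \| \varphi u \|_\i$ yields $| \int_U f \, \mathrm{d}\DIV(\varphi u) | \le \| f \|_{BF} \| \varphi u \|_\i \le \| f \|_{BF} \| u \|_\i \| \varphi \|_\i$. The second term is estimated directly by $\| f \|_\i \| \varphi \|_\i \| \DIV u \|(U)$. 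Adding the two estimates gives the desired bound with $C = \| f \|_{BF} \| u \|_\i + \| f \|_\i \| \DIV u \|(U) < \i$, independent of $\varphi$; hence $\DIV(fu) \in \MM(\R^m)$ and $fu \in \DM(\R^m)$.

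Finally, the \emph{in particular} clause follows by specializing $f = \chi_U$, which lies in $\BF(U)$ exactly when $U$ has finite anexometer; then $\chi_U u$ coincides with $u$ on $U$ and vanishes outside, so $\chi_U u \in \DM(\R^m)$ for every $u \in \DM(U)$, i.e. $U$ is a null extension domain. The only point requiring genuine care — and the mild obstacle of the argument — is the verification that $\varphi u \in \DM(U)$ together with the linear scaling $\| \varphi u \|_\i \le \| \varphi \|_\i \| u \|_\i$, since this is precisely what lets the bounded fluctuation seminorm absorb the first term with a constant uniform in $\varphi$, which is what the Riesz criterion demands.
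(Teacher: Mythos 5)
Your proof is correct and follows essentially the same route as the paper: both verify $\varphi u \in \DM(U)$ via the product rule from \Cref{lem:MSinDM}, rearrange to isolate $\int_U f u \cdot \nabla \varphi \, \mathrm{d}x$, and bound the two resulting terms by $\| f \|_{BF} \| u \|_\i \| \varphi \|_\i$ and $\| f \|_\i \| \DIV u \|(U) \| \varphi \|_\i$ respectively. The only cosmetic difference is that you invoke the seminorm \eqref{eq:bfseminorm} directly where the paper uses the functional $\mu_f$ of \Cref{lem:muf} with $\| \mu_f \| = \| f \|_{BF}$, which is the same estimate.
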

	
	\begin{proof}
		If $\varphi \in C^\i_c(\R^m)$, then
		$$
		\DIV \left( \varphi u \right) = u \cdot \nabla \varphi + \varphi \DIV u
		$$
		in the sense of distributions by \Cref{lem:MSinDM}. Therefore $\varphi u \in \DM(U)$ so that
		$$
		- \langle \mu_f, \varphi u \rangle = \int_U f \, \mathrm{d} \DIV \left( \varphi u \right) = \int_U f u \cdot \nabla \varphi \, \mathrm{d} x + \int_U f \varphi \, \mathrm{d} \DIV u.
		$$
		Rearranging terms and invoking \Cref{lem:muf} gives
		$$
		\int_U f u \cdot \nabla \varphi \, \mathrm{d} x
		\le \| \mu_f \| \| u \|_\i \| \varphi \|_\i + \| f \|_\i \| \DIV u \|(U) \| \varphi \|_\i
		$$
		so that $\DIV \left( f u \right) \in \MM(\R^m)$. Since $f u \in L^\i(\R^m)$ by $\| f u \|_\i \le \| f \|_\i \| u \|_\i$, the proof is complete.
	\end{proof}
	
	\emph{Example:}	Let $\WW_0(U)$ be the homogeneous Sobolev space arising as the closure of test functions $C^\i_c(U)$ in the Poincaré norm $\| f \| = \| \nabla f \|_1$. If $f \in C^\i_c(U)$, then it is immediate to check that $\nabla f = D f = \mu_f$, hence $\| f \|_{BF(U) } = \| \nabla f \|_1$ and by approximation the same is true for the precise representative of any $f \in \WW_0(U) \cap L^\i(U)$.
	\\
	This observation applies in particular if $f \in C_0(U)$ with $\nabla f \in L^1(U)$. To see this, it suffices to prove that the positive part $f^+$ and negative part $f^-$ belong to $\WW_0(U)$. Let $\e > 0$ and $f^+_\e = (f - \e)^+$. The function $f^+_\e$ is continuous with compact support in $U$ since $f \in C_0(U)$. Moreover, $\nabla f^+_\e \in L^1(U)$ by \cite[Thm. 4.4(iii)]{EG}. A standard convolution smoothing argument shows that $f^+_\e \in \WW_0(U)$. Invoking again \cite[Thm. 4.4(iii)]{EG} gives $f^+_\e \to f^+$ in $\WW_0(U)$ as $\e \to 0$. The argument is analogous for $f^-$ so that in total $f \in \WW_0(U) \cap L^\i(U)$ whenever $f \in C_0(U)$ with $\nabla f \in L^1(U)$.
	\\
	
	By the Hahn-Banach extension theorem, \Cref{lem:muf} implies the existence of a finitely additive set function that vanishes on Lebesgue null sets of $U$, i.e., an element $\bar{\mu}_f \in \ba(U; \R^m) \cong L'_\i(U; \R^m)$, for which \eqref{eq:mufgauß} continues to hold. Conversely, the existence of such a set function suffices for $f$ to have bounded fluctuation by \eqref{eq:bfseminorm}. In this sense, my formulation of the divergence theorem implies the one first used in \cite[Thm. 3.18]{SS}. Cf. also \cite[Cor. 4.29]{SS2, SS3}. Conversely, the formulation in these sources is clearly sufficient for $U$ to have finite anexometer. While $\BD'(U)$ is isometrically isomorphic to the quotient space $\ba(U; \R^m) / \BD(U)^\circ$ if $\BD(U)^\circ$ denotes the polar space of $\BD(U) \subset L^\i(U; \R^m)$, it were also interesting to know if it can be identified directly with a vector space of (more regular) finitely additive $\R^m$-valued set functions, which would enable integration with respect to the surface functional without the need to choose a representative for it in $\ba(U; \R^m)$.
	\\
	
	I conclude this section by characterizing the kernel of the bounded fluctuation seminorm. Besides its use in the present investigation, this result provides an intrinsically interesting characterization of the extent to which two functions of bounded fluctuation may differ while still defining essentially the same element of $\BF(U)$. While the implication $\implies$ of \Cref{thm:semiker} follows easily from the absolute $\HH^{m - 1}$-continuity of the Radon measure $\DIV u$ for $u \in \DM(U)$, the converse implication is substantially complicated by the need to pass from the precise representative to the function itself. Indeed, if we contented ourselves with the precise representative, then \Cref{thm:semiker} could be obtained as a corollary to the general theory of bounded variation functions.
	
	\begin{theorem} \label{thm:semiker}
		Let $f \colon U \to \R$ be a bounded $\bar{\BB}(U)$-measurable function. Then
		$$
		f \text{ vanishes } \HH^{m - 1} \text{-a.e.} \iff \| f \|_{BF(U) } = 0.
		$$
	\end{theorem}
		
	\begin{proof}
		$\implies$: if $f$ vanishes $\HH^{m - 1}$-a.e., then
		$$
		\int_U f \, \mathrm{d} \DIV u = 0 \quad \forall u \in \DM(U)
		$$
		since $\DIV u \ll \HH^{m - 1}$ by \cite[Cor. 2.3.1]{Co}. Thus, $\| f \|_{BF(U) } = 0$ by \Cref{def:bndfluc}.
		\\
		$\impliedby$: we may assume $f$ to be $\BB(U)$-measurable by modifying it on an $\HH^{m - 1}$-null set. Moreover, $\| f \|_{BF(U) } = 0$ implies that $f = 0$ in the sense of distributions by \Cref{lem:div rich}, hence $f$ vanishes $\mathcal{L}^m$-a.e. In particular, $f \in \mathcal{W}^{1, 1}_0(U) \cap \mathcal{L}^\i(U)$. Consider the locally convex space $X = W^{1, 1}_0(U) \cap L^\i(U)$ topologized in the coarsest way that renders the projections $X \to W^{1, 1}_0(U)$ and $X \to L^\i(U)$ continuous, where $W^{1, 1}_0(U)$ carries its norm topology and $L^\i(U)$ its bounded weak* topology. This is the topology induced by the seminorms $s(x', K)$, where $K$ runs over the absolutely convex, norm compact subsets of the predual space $L^1(U)$. Crucially, the bounded weak* topology is thus locally convex and has the same continuous, linear functionals as the weak* topology by the Mackey-Arens theorem since it is finder than the weak* topology but coarser than the Mackey topology, which is defined by letting $K$ run over the absolutely convex, weakly compact subsets. Crucially, the bounded weak* topology has an equivalent description as the final topology that renders the inclusion of any norm bounded set into $L^\i(U)$ continuous when $L^\i(U)$ carries the weak* topology. Thus, a linear functional is bounded weak* continuous iff its restriction to any norm bounded set is weak* continuous. In particular, we may restrict to checking continuity along bounded nets.	Arguing similar to the case of intersected Banach spaces, cf., e.g., \cite[Kap. 1, Satz 5.13]{GGZ}, one may show that the continuous dual of $X$ is given by $W^{- 1, \i}(U) + L^1(U)$, i.e., if $x' \in X'$, then there exist $x'_1 \in W^{- 1, \i}(U) \cong W^{1, 1}_0(U)'$ and $x'_2 \in L^1(U)$ such that $x' = x'_1 + x'_2$.
		\\
		Let $K \subset U$ be compact with $\HH^{m - 1}(K) < + \i$. We denote the precise representative of a function $u \in W^{1, 1}_0(U)$ by $u^*$. Note that $u^*$ will be $\HH^{m - 1}$-essentially bounded if $u \in X$ due to \cite[Thm. 5.20(ii)]{EG}.	We claim that
		$$
		\ell \colon X \to \R \colon u \mapsto \int_K u^* \, \mathrm{d} \HH^{m - 1}
		$$
		defines an element of $X'$. Obviously, $\ell$ is linear. To check its continuity, we may reduce to nets which are norm bounded in $L^\i(U)$ since a linear functional is bounded weak* continuous iff its restriction to the unit ball is weak* continuous. Hence, we may even reduce to sequences since the weak* topology on $L^\i(U)$ is metrizable on bounded sets by merit of $L^1(U)$ being separable. Moreover, since $K \subset \subset U$, there is no loss in assuming $U = \R^m$. Let $u_n \in X$ be a (bounded) sequence with $\lim_n u_n = 0$. Arguing by convolution smoothing with the standard mollifier, we find sequences $u_{n, k} \in X \cap C^\i(U)$ with $\lim_k u_{n, k} = u_n$ in $X$ and $\HH^{m - 1}$-a.e. by \cite[Thm. 5.20(ii)]{EG}. In particular, $\lim_k \ell(u_{n, k} - u_n) = 0$ so that upon replacing $u_n$ with $u_{n, k(n) }$ for a sufficiently large $k = k(n)$, we may assume $u_n \in X \cap C^\i(U)$ for the purpose of proving $\lim_n \ell(u_n) = 0$. Thus, invoking \cite[Thm. 4.22, Cor. 4.25]{Ki} and extracting a subsequence (not relabeled), we have $\lim_n u_n(x) = 0$ for $\HH^{m - 1}$-a.e. $x \in U$. Combining this with the dominated convergence theorem and the boundedness of $u_n$ in $L^\i(U)$, we find $\lim_n \ell(u_n) = 0$ as had to be shown. Consequently, $\ell$ is continuous so that there exist $\ell_1 \in L^\i(U; R^m)$ and $\ell_2 \in L^1(U)$ with
		$$
		\int_K u^* \, \mathrm{d} \HH^{m - 1} = - \int_U \nabla u \cdot \ell_1 \, \mathrm{d} x + \int_U u \ell_2 \, \mathrm{d} x \quad \forall u \in X.
		$$
		In particular, by picking $u \in C^1_c(U)$ and rearranging terms, we see that the distributional derivative $\DIV \ell_1$ is a Radon measure. Moreover, since every element of $C^1_c(U)$ agrees with its precise Sobolev representative, we see that $\HH^{m - 1} \resmes K = \DIV \ell_1 + \ell_2$ as Radon measures. Consequently, setting $f_{a, b} \coloneqq \min\{ a, \max\{ - b, f \} \}$, we have
		$$
		\int_K f_{a, b} \, \mathrm{d} \HH^{m - 1} = - \int_U \nabla f_{a, b} \cdot \ell_1 \, \mathrm{d} x + \int_U f_{a, b} \ell_2 \, \mathrm{d} x \quad \forall f \in \mathcal{W}^{1, 1}_0(U),
		$$
		with $f_{a, b}$ instead of its precise representative on the left side. We conclude
		\begin{equation} \label{eq:int f itself}
			\lim_{a, b \to + \i} \int_K f_{a, b} \, \mathrm{d} \HH^{m - 1} = - \int_U \nabla f \cdot \ell_1 \, \mathrm{d} x + \int_U f \ell_2 \, \mathrm{d} x \quad \forall f \in \mathcal{W}^{1, 1}_0(U) \cap \mathcal{L}^\i(U).
		\end{equation}
		If $\| f \|_{BF(U) } = 0$, then
		\begin{equation} \label{eq:vani}
			- \int_U \nabla f \cdot \ell_1 \, \mathrm{d} x = 0.
		\end{equation}
		Moreover, since $f$ vanishes $\mathcal{L}^m$-a.e. by \Cref{lem:div rich}, we have
		\begin{equation} \label{eq:vani2}
			\int_U f \ell_2 \, \mathrm{d} x = 0.
		\end{equation}
		Putting \eqref{eq:int f itself}, \eqref{eq:vani}, and \eqref{eq:vani2} together implies
		\begin{equation} \label{eq:vani3}
			\lim_{a, b \to + \i} \int_K f_{a, b} \, \mathrm{d} \HH^{m - 1} = 0 \quad \forall K = \cl K \subset \subset U \colon \HH^{m - 1}(K) < + \i.
		\end{equation}
		As every Suslin set $S \subset U$ is inner regular for $\HH^{m - 1}$ by \cite[Cor. 2.10.48]{Fe3} and $f$ is Borel measurable, we conclude that $f$ vanishes $\HH^{m - 1}$-a.e. For, if $\HH^{m - 1} \left( \left\{ f > r  \right\} \right) > 0$ for some $r \in R$, then, since $\left\{ f > r \right\}$ is a Borel set, hence a Suslin set by \cite[§2.2.10, p. 66]{Fe3}, we could pick a compact set $K \subset \left\{ f > r \right\}$ with $\HH^{m - 1}(K) > 0$, contradicting \eqref{eq:vani3}.
	\end{proof}
	
	\subsection{Approximation by smooth functions}
	
	Any $f \in \BF(U)$ is $\bar{\BB}(U)$-measurable so that it agrees with an $\BB(U)$-measurable function outside of a $\HH^{m - 1}$-null set by \cite[Lem. A.3]{Ru}. Clearly, there is such a modification whose range is no larger than that of $f$.
	
	\begin{theorem} \label{thm:innerappro}
		For any real numbers $a \le 0 \le b$, for every $f \in \BF(U; [a, b] )$ with $\| f \|_{BF} > 0$ and every $\BB(U)$-measurable function $\mathfrak{f} \colon U \to \left[ a, b \right]$ satisfying $\mathfrak{f}(x) = f(x)$ for $\HH^{m - 1}$-a.e. $x \in U$, there is a net of functions
		$$
		f_\alpha \in C_0(U; [a, b] ); \quad \nabla f_\alpha \in L^1(U); \quad \| \nabla f_\alpha \|_1 \le \| f \|_{BF} \quad \forall \alpha \in I
		$$
		such that
		$$
		f_\alpha \weakast \mathfrak{f} \text{ in } \MM'(U); \quad f_\alpha \weakast f \text{ in } \MM'_{\DIV}(U); \quad \nabla f_\alpha \weakast \mu_f \text{ in } \BD'(U).
		$$
	\end{theorem}
	
	\emph{Remark:} \Cref{thm:innerappro} will fail if
	\begin{equation} \label{eq:f BF null}
		\| f \|_{BF} = 0
	\end{equation}
	because no discontinuous modification of $f$ can be approximated weakly* in $\MM'(U)$ by the constant null net. Trivially, the null net still provides a smooth approximation of the null function, which is an $\HH^{m - 1}$-a.e. modification of $f$ if \eqref{eq:f BF null} by \Cref{thm:semiker}.
	
	\begin{proof}
		We first treat the case when $0 \in (a, b)$. Let $X = L^1(U; \R^m ) \times C_0(U)$ and consider the following convex subsets of $X$:
		\begin{gather*}
			K_1 = \left\{ \| x_1 \| \le \| f \|_{BF(U) } \right\}; \quad
			K_2 = \left\{ x_2 \in C_0(U;[a, b]) \right\}; \\
			K_3 = \left\{ \langle x_1, u \rangle = - \langle x_2, \DIV u \rangle \quad \forall u \in \DM(U) \right\}.
		\end{gather*}
		The origin is an interior point of $K_1$ and $K_2$ in the strong topology of $X$. Combining this with $0 \in K_1 \cap K_2 \cap K_3$, the duality theorem \cite[§3.4, Thm. 1]{IT} together with the observation that $I_A^* = s_A$ for every subset $A \subset X$ implies
		\begin{equation} \label{eq:inficonv1}
			s \left( K_1 \cap K_2 \cap K_3, \cdot \right) = s(K_1, \cdot) \oplus s(K_2, \cdot) \oplus s(K_3, \cdot).
		\end{equation}
		Here, the symbol $\oplus$ denotes infimal convolution of functions. Now, applying \cite[§3.4, Thm. 1]{IT} to compute the convex conjugate of the function in \eqref{eq:inficonv1} for the dual pair $\left( X', X'' \right)$ yields
		\begin{equation} \label{eq:clintersect2}
			\begin{gathered}
				I \left( K_1 \cap K_2 \cap K_3, \cdot \right)^{**} = I \left( K_1, \cdot \right)^{**} + I \left( K_2, \cdot \right)^{**} + I \left( K_3, \cdot \right)^{**}  \\
				\iff
				\mathrm{cl}^* \left( K_1 \cap K_2 \cap K_3 \right) = \mathrm{cl}^* K_1 \cap \mathrm{cl}^* K_2 \cap \mathrm{cl}^* K_3,
			\end{gathered}
		\end{equation}
		where the sets $K_1, K_2, K_3$ are identified with their images under the canonical embedding of $X$ into $X''$ and the closure is taken in the weak* topology of $X''$. The Goldstine theorem \cite[Satz VIII.3.17]{We} implies
		\begin{equation} \label{eq:clos1}
			\mathrm{cl}^* K_1 = \left\{ x^{**} \in X'' \colon \| x^{**}_1 \| \le \| f \|_{BF(U) } \right\}.
		\end{equation}
		Denoting by $\delta_u$ the Dirac measure at a point $u \in U$, we check that
		\begin{equation} \label{eq:clos2}
			\mathrm{cl}^* K_2 = \left\{ x^{**} \in X'' \colon \langle \delta_u, x^{**}_2 \rangle_{X', X''} \in [a, b] \quad \forall u \in U \right\}.
		\end{equation}
		Indeed, the polar set of the right set in \eqref{eq:clos2} with respect to the pair $(X', X'')$ is a weak* closed convex set containing the origin. Moreover, $K_2$ is the polar of this set with respect to the pair $(X, X')$. Consequently, $K_2$ is weak* dense in the right set by \Cref{thm:predualdens}. Claim:
		\begin{equation} \label{eq:clos3}
			\mathrm{cl}^* K_3 = \left\{ (x^{**}_1, x^{**}_2) \in X'' \colon \langle u, x^{**}_1 \rangle = - \langle \DIV u, x^{**}_2 \rangle \quad \forall u \in \DM(U) \right\}.
		\end{equation}
		Indeed, under the isometric identification
		$$
		\DM(U) \to X' = L^\i(U) \times \MM(U) \colon u \mapsto (u, \DIV u),
		$$
		the set $K_3$ becomes the polar of $\DM(U)$ with respect to the dual pair $\left( X, X' \right)$. Moreover, $\DM(U)$ is weak* closed in $X'$ because, if $u_\alpha$ converges to $u$ weakly* in the Lebesgue space $L^\i(U; \R^m )$ and $\DIV u_\alpha$ converges to $\mu$ weakly* in $\MM(U)$, then, as $(\nabla \varphi, \varphi) \in X$ for any test function $\varphi \in C^\i_c(U)$, there holds
		$$
		- \int_U \varphi \, \mathrm{d}\mu = - \lim_\alpha \int_U \varphi \, \mathrm{d} \DIV u_\alpha = \lim_\alpha \int_U \nabla \varphi \cdot u_\alpha \, \mathrm{d}x = \int_U \nabla \varphi \cdot u \, \mathrm{d}x,
		$$
		so that $u \in L^\i(U; \R^m)$ and $\DIV u = \mu \in \MM(U)$ in $\DD'(U)$ and hence $u \in \DM(U)$. Therefore, \eqref{eq:clos3} follows by \Cref{thm:predualdens}.
		\\
		Let $\mu_{\mathfrak{f} }$ denote a norm-preserving Hahn-Banach extension of $\mu_f$ to $L^\i(U)$. Clearly, $\left( \mu_{\mathfrak{f} }, \mathfrak{f} \right) \in \mathrm{cl}^* K_2$ by \eqref{eq:clos2}. Moreover, $\left( \mu_{\mathfrak{f} }, \mathfrak{f} \right) \in \mathrm{cl}^* K_3$ by \eqref{eq:clos3} and $f \in \BF(U)$. In total, \eqref{eq:clintersect2} implies the existence of a net $\left( \nabla f_\alpha, f_\alpha \right) \in K_1 \cap K_2 \cap K_3$ such that $\left( \nabla f_\alpha, f_\alpha \right)$ converges weakly* to $\left( \mu_{\mathfrak{f} }, \mathfrak{f} \right)$ in $X''$. By definition of $K_1$, $K_2$, $K_3$ this net has the desired properties.
		\\
		It remains to remove that restriction $0 \in (a, b)$. Let $\NN(\mathfrak{f})$ be a neighborhood base of $\mathfrak{f}$ in the weak* topology of $\MM'(U)$. For every $\e > 0$ and $V \in \NN(\mathfrak{f})$, there is $f_{V, \e} \in C_0(U; \left[ a - \e, b + \e \right] )$ with $\| \nabla f_{V, \e} \|_{L^1} \le \| f \|_{BF}$ and $f_{V, \e} \in V$. Let $V_n \in \NN(\mathfrak{f})$ be any sequence with $V_{n + 1} \subset V_n$ for all $n \in \N$. Regard $\NN(\mathfrak{f})$ as a directed set via downward inclusion. Define a net $f_V$ that converges weakly* to $\mathfrak{f}$ by choosing $f_V \in V$ with $\| \nabla f_V \|_{L^1} \le \| f \|_{BF}$ and $V \subset V_n \implies f_V \in C_0(U; \left[ a - \tfrac{1}{n}, b + \tfrac{1}{n} \right] )$. Finally, replace $f_V$ by the truncated net $\mathfrak{f}_V = \max\{ a, \min\{ b, f_V \} \}$, which satisfies $\| \nabla \mathfrak{f}_V \|_{L^1} \le \| \nabla f_V \|_{L^1} \le \| f \|_{BF}$ by \cite[Thm. 4.4(iii)]{EG} and $V \subset V_n \implies \| f_V - \mathfrak{f}_V \| \le \frac{1}{n} \to 0$ so that $\mathfrak{f}_V$ also converges weakly* to $\mathfrak{f}$. Moreover, $\nabla \mathfrak{f}_V \in \BD'(U)$ is relatively compact and clearly, any cluster point of the net equals $\mu_f$ so that $\nabla \mathfrak{f}_V$ converges to $\mu_f$ weakly* in $\BD'(U)$.
	\end{proof}
	
	\begin{corollary} \label{cor:testdens}
		The net in \Cref{thm:innerappro} may be chosen in $C^\i_c(U; [a, b] )$.
	\end{corollary}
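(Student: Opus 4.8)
The plan is to keep the duality framework from the proof of \Cref{thm:innerappro} and merely shrink the set $K_1 \cap K_2 \cap K_3$ to a smooth subset with the same weak* closure. Recall that the proof of \Cref{thm:innerappro} exhibits $\left( \bar{\mu}_f, \bar{f} \right)$ as a member of $\cl^*(K_1\cap K_2\cap K_3)$, the closure taken in $X'' = L'_\i(U;\R^m)\times \MM'(U)$, and that any net in $K_1\cap K_2\cap K_3$ converging weak* to $\left( \bar{\mu}_f, \bar{f} \right)$ automatically realises all four convergences of \Cref{thm:innerappro} (the last two by restriction, using $\DB \hookrightarrow L_\i$ and $\MM_{\DIV}\hookrightarrow\MM$). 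I therefore introduce
$$ \tilde{K} := \left\{ (\nabla\phi, \phi) \in X \colon \phi \in C_{\i, c}(U;[a,b]), \ \| \nabla\phi \|_1 \le \| f \|_{BF} \right\}. $$
Since smooth fields satisfy $(v,w)\in K_3 \iff v = \nabla w$ distributionally, one has $\tilde{K} \subseteq K_1\cap K_2\cap K_3$. As the weak* closure of a set coincides with the weak* closure of its norm closure, it suffices to prove that $\tilde{K}$ is \emph{dense} in $K_1\cap K_2\cap K_3$ for the norm of $X = L_1(U;\R^m)\times C_0(U)$; then $\cl^*\tilde{K} = \cl^*(K_1\cap K_2\cap K_3) \ni \left( \bar{\mu}_f, \bar{f} \right)$, and any net in $\tilde{K}$ witnessing this membership proves the corollary.

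The heart of the matter is thus an approximation claim mirroring the \emph{Example} preceding \Cref{thm:innerappro}: given $w \in C_0(U;[a,b])$ with $\nabla w \in L_1(U)$ and $\| \nabla w \|_1 \le \| f \|_{BF}$, I must produce $\phi \in C_{\i, c}(U;[a,b])$ with $\| \nabla\phi \|_1 \le \| f \|_{BF}$, with $\| \phi - w \|_\i$ small, and with $\| \nabla\phi - \nabla w \|_1$ small. I would proceed in two steps. First, truncate towards zero: for $\e > 0$ set $w_\e := \operatorname{sgn}(w)\,(|w| - \e)^+$. Because $a \le 0 \le b$, the function $w_\e$ stays $[a,b]$-valued; because $w \in C_0(U)$, the set $\{ |w| \ge \e \}$ is compact in $U$, so $w_\e$ has compact support; by \cite[Thm. 4.4(iii)]{EG} one has $\nabla w_\e = \chi_{\{ |w| > \e \}}\,\nabla w$ a.e., whence $\| \nabla w_\e \|_1 \le \| \nabla w \|_1$, while $\| w_\e - w \|_\i \le \e$ and $\nabla w_\e \to \nabla w$ in $L_1$ as $\e \to 0$. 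Second, mollify: with a standard nonnegative mollifier $\rho_\eta$ and $\eta < \dist(\supp w_\e, \p U)$, put $\phi := w_\e * \rho_\eta \in C_{\i, c}(U)$. As $\phi$ is a weighted average of the $[a,b]$-valued $w_\e$ (extended by the admissible value $0$), it remains $[a,b]$-valued; Young's inequality gives $\| \nabla\phi \|_1 = \| (\nabla w_\e)*\rho_\eta \|_1 \le \| \nabla w_\e \|_1 \le \| f \|_{BF}$; and $\phi \to w_\e$ uniformly, by uniform continuity of the compactly supported $w_\e$, together with $\nabla\phi \to \nabla w_\e$ in $L_1$ as $\eta \to 0$. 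Choosing $\e$ and then $\eta$ small enough yields the claim, hence the desired density.

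With the density in hand, the corollary follows: I select a net $(\nabla\phi_\alpha, \phi_\alpha) \in \tilde{K}$ converging weak* in $X''$ to $\left( \bar{\mu}_f, \bar{f} \right)$, and reading off the two components gives $\phi_\alpha \weakast \bar{f}$ in $\MM'(U)$ and $\nabla\phi_\alpha \weakast \bar{\mu}_f$ in $L'_\i(U)$, together with the convergences in $\MM'_{\DIV}(U)$ and $\DB'(U)$ by restriction, exactly as at the end of the proof of \Cref{thm:innerappro}, now with $\phi_\alpha \in C_{\i, c}(U;[a,b])$ and $\| \nabla\phi_\alpha \|_1 \le \| f \|_{BF}$. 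I expect the main obstacle to be the density lemma, and within it the simultaneous preservation of all three constraints — compact support, the range $[a,b]$, and the gradient bound $\| \nabla\phi \|_1 \le \| f \|_{BF}$ — while still forcing uniform convergence of the functions themselves. Truncation towards zero is precisely what reconciles compact support with the sign-symmetric range and with the monotonicity $\| \nabla w_\e \|_1 \le \| \nabla w \|_1$, and mollification is compatible with all three constraints because averaging neither enlarges the range of an $[a,b]$-valued function nor increases the $L_1$-norm of its gradient.
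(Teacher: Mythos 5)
Your proof is correct and takes essentially the same approach as the paper: the paper's own proof likewise reduces, via \Cref{thm:innerappro}, to strongly approximating functions in $C_0(U;[a,b])$ with $L_1$ gradients by test functions, using exactly your truncate-towards-zero-and-mollify construction (the Example below \Cref{lem:muf}) together with Young's inequality to preserve the range constraint and the bound $\| \nabla \phi \|_1 \le \| f \|_{BF}$. Your repackaging of this as norm-density of the smooth pairs $\tilde{K}$ inside $K_1 \cap K_2 \cap K_3$, hence equality of their weak* closures, is only an organizational difference from the paper's post-processing of the net supplied by \Cref{thm:innerappro}.
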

	
	\begin{proof}
		By \Cref{thm:innerappro}, it suffices to consider $f \in C_0(U; [a, b] )$ with $\nabla f \in L^1(U)$. Such an $f$ can be strongly approximated by test functions as described in the example below \Cref{lem:muf}. Moreover, these satisfy $\sup_\alpha \| \nabla f_\alpha \|_1 \le \| f \|_{BF(U) }$ by the Young convolution inequality and belong to $C^\i_c(U; [a, b] )$ if $f \in C_0(U; [a, b] )$.
	\end{proof}
	
	\begin{corollary} \label{cor:GNSinBF}
		Let $1^* = \tfrac{m}{m - 1} \in (1, +\i]$ for $m \ge 1$.
		\begin{equation} \label{eq:GNSinBF}
			\| f \|_{L^{1^*}(U) } \le \omega_m \| f \|_{BF(U) } \quad \forall f \in \BF(U),
		\end{equation}
		where $\omega_m$ is the optimal constant in the Sobolev inequality \cite[Thm. 4.8]{EG}.
	\end{corollary}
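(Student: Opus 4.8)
The plan is to reduce the estimate to the classical Gagliardo--Nirenberg--Sobolev inequality via the inner approximation by compactly supported test functions from \Cref{cor:testdens}, and then to pass to the limit using lower semicontinuity of the $L_{1^*}$-norm. I emphasize that the inequality cannot be read off directly from the embedding $\BF(U) \to BV(U)$ of \eqref{eq:BFintoBV} combined with the Sobolev inequality for $BV$ functions: the interior variation $\| \cdot \|_{BV(U)}$ does not see the boundary behaviour of $f$, whereas $\| \cdot \|_{BF}$ does, and it is exactly this extra boundary information — encoded in the fact that the approximants are \emph{compactly supported} in $U$ — that makes the whole-space Sobolev inequality applicable after extension by zero.

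Concretely, since every $f \in \BF(U)$ is bounded, its range lies in some $[a, b]$ with $a \le 0 \le b$, and $f \in L_\i(U) \subset L_{1^*}(U)$ as $\left| U \right| < \i$. Fixing $\bar{f}$ and an extension as in \Cref{thm:innerappro}, \Cref{cor:testdens} supplies a net $f_\alpha \in C_{\i, c}(U; [a, b])$ with $\| \nabla f_\alpha \|_1 \le \| f \|_{BF}$ and $f_\alpha \to \bar{f}$ weakly* in $\MM'(U)$. Extending each $f_\alpha$ by zero to an element of $C_{\i, c}(\R^m)$ whose gradient is supported in the compact set $\supp f_\alpha \subset U$, the Sobolev inequality \cite[Thm. 4.8]{EG} yields
\[
\| f_\alpha \|_{L_{1^*}(U)} = \| f_\alpha \|_{L_{1^*}(\R^m)} \le \omega_m \| \nabla f_\alpha \|_{L_1(\R^m)} = \omega_m \| \nabla f_\alpha \|_1 \le \omega_m \| f \|_{BF}.
\]

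The crux is to convert the $\MM'(U)$-weak* convergence of the net into convergence in the duality controlling the $L_{1^*}$-norm. Writing $q$ for the conjugate exponent of $1^*$ — so $q = m$ if $m \ge 2$ and $q = 1$ if $m = 1$ — the finiteness $\left| U \right| < \i$ gives $L_q(U) \subset L_1(U)$, and each $g \in L_1(U)$ defines a finite Radon measure $g \, \mathrm{d}x \in \MM(U)$. Hence $\int_U f_\alpha g \, \mathrm{d}x \to \int_U \bar{f} g \, \mathrm{d}x$ for every $g \in L_q(U)$, i.e. $f_\alpha \to \bar{f}$ in $\sigma\left( L_{1^*}(U), L_q(U) \right)$; this is the weak topology of the reflexive space $L_{1^*}(U)$ when $m \ge 2$ and the weak* topology of $L_\i(U)$ when $m = 1$. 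In both cases $\| \cdot \|_{L_{1^*}(U)}$ is lower semicontinuous for this topology, so $\| \bar{f} \|_{L_{1^*}(U)} \le \liminf_\alpha \| f_\alpha \|_{L_{1^*}(U)} \le \omega_m \| f \|_{BF}$.

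It remains to replace $\bar{f}$ by $f$. The two agree off an $\HH^{m - 1}$-null set, which is also Lebesgue null (a set of zero $\HH^{m-1}$-measure has zero $\HH^m$-, hence zero Lebesgue, measure, by monotonicity of Hausdorff measures), so $\| f \|_{L_{1^*}(U)} = \| \bar{f} \|_{L_{1^*}(U)}$ and the claim follows. The single genuinely delicate step is the matching of convergence modes in the third paragraph: the approximation theorem delivers only $\MM'$-weak* convergence, and the argument goes through precisely because the finite measure of $U$ provides the chain of embeddings $L_q(U) \hookrightarrow L_1(U) \hookrightarrow \MM(U)$ needed to test against the predual of $L_{1^*}(U)$.
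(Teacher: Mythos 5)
Your strategy coincides with the paper's — approximate via \Cref{cor:testdens}, apply the classical Sobolev inequality to each compactly supported approximant, then pass to the limit by weak lower semicontinuity of the norm — but there is a genuine gap in the execution: you assume $\left| U \right| < + \i$, and this is not a hypothesis of the corollary. The standing assumption of the paper is only that $U \subset \R^m$ is open; finiteness of the measure enters in \Cref{thm:main}, not here. Your use of it is essential, not cosmetic: it is what gives you $f \in L_\i(U) \subset L_{1^*}(U)$ a priori, and, more importantly, the chain $L_q(U) \subset L_1(U) \subset \MM(U)$ by which you convert weak* convergence in $\MM'(U)$ into convergence in $\sigma \left( L_{1^*}(U), L_q(U) \right)$. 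If $\left| U \right| = + \i$, a general $g \in L_q(U)$ is not Lebesgue integrable and does not induce a finite Radon measure, so the pivotal third paragraph of your argument breaks down. And the infinite-measure case is precisely the one the corollary exists for: \Cref{cor:finmass} applies \eqref{eq:GNSinBF} to a set with $\left| U \right| = + \i$ in order to conclude that it has infinite anexometer; under your extra hypothesis that application would be impossible.

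The repair is small and amounts to the paper's actual argument. Test the bounded net $f_\alpha$ not against all of $L_q(U)$ but only against compactly supported, bounded, measurable functions $g$: these always define finite Radon measures — indeed, by the footnote in the paper's proof they even belong to $\MM_{\DIV}(U)$, so one may use the convergence $f_\alpha \to f$ weakly* in $\MM'_{\DIV}(U)$ from \Cref{thm:innerappro} and land on $f$ directly, which also makes your detour through $\bar{f}$ and the $\HH^{m - 1}$-null modification set unnecessary — and they are dense in $L_q(U)$ for every open $U$, of finite measure or not. Boundedness of $f_\alpha$ in $L_{1^*}(U)$ together with convergence against this dense family identifies every weak (or weak*, if $m = 1$) cluster point of the net with $f$ almost everywhere, and lower semicontinuity of the norm then yields \eqref{eq:GNSinBF} with no restriction on $\left| U \right|$.
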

	
	\begin{proof}
		By \cite[Thm. 4.8]{EG} in connection with the example below \Cref{lem:muf}, the inequality \eqref{eq:GNSinBF} is true for all $f \in C^\i_c(U)$. By \Cref{cor:testdens}, for any $f \in \BF(U)$, there is a net $f_\alpha \in C^\i_c(U)$ such that $\lim \| f_\alpha \|_{BF(U) } = \| f \|_{BF(U) }$ and $f_\alpha$ converges to $f$ weakly* in $\MM'_{\DIV}(U)$. By \eqref{eq:GNSinBF} being true for $f_\alpha$, the net $f_\alpha$ is bounded in $L^{1^*}(U)$ and $\lim \langle f_\alpha, g \rangle = \langle f, g \rangle$ for $g \in \MM'_{\DIV}(U)$, which is a dense subset of the dual of $L^{1^*}(U)$ so that $f_\alpha$ converges to $f$ weakly in $L^{1^*}(U)$, cf. \Cref{lem:div rich}. Consequently, \eqref{eq:GNSinBF} follows by weak lower semicontinuity of the norm.
	\end{proof}
	
	\begin{corollary} \label{cor:finmass}
		Let $m \ge 2$ and $\left| U \right| = + \i$. Then $U$ has infinite anexometer.
	\end{corollary}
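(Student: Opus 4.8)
The plan is to derive this immediately from the Sobolev-type inequality of \Cref{cor:GNSinBF}, arguing by contradiction. Suppose that $U$ had finite anexometer, i.e.\ $\chi_U \in \BF(U)$ with $\| \chi_U \|_{BF(U)} < +\i$. Applying \eqref{eq:GNSinBF} to the admissible choice $f = \chi_U$ would then give
$$
\| \chi_U \|_{L_{1^*}(U)} \le \omega_m \| \chi_U \|_{BF(U)} < +\i.
$$
On the other hand, since $\chi_U$ is the indicator of $U$, one computes directly $\| \chi_U \|_{L_{1^*}(U)}^{1^*} = \int_U 1 \, \mathrm{d}x = |U|$, so that $\| \chi_U \|_{L_{1^*}(U)} = |U|^{1/1^*}$.

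The place where the hypothesis $m \ge 2$ is used is precisely that it renders the Sobolev exponent $1^* = \tfrac{m}{m - 1}$ finite (indeed $1^* \in (1, 2]$ for $m \ge 2$). Hence the assumption $|U| = +\i$ forces $\| \chi_U \|_{L_{1^*}(U)} = |U|^{1/1^*} = +\i$, contradicting the previous display. Therefore $\chi_U \notin \BF(U)$, which is to say that $U$ has infinite anexometer. I do not expect any genuine obstacle here: the statement is essentially a reformulation of the $\BF$-to-$L_{1^*}$ embedding. The only point worth flagging is the role of $m \ge 2$: for $m = 1$ the exponent degenerates to $1^* = +\i$, so a finite $L_{1^*}$-norm no longer contradicts $|U| = +\i$, and the argument breaks down; this is exactly why the corollary is restricted to $m \ge 2$.
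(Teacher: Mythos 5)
Your proof is correct and takes essentially the same approach as the paper: both hinge on the Sobolev-type inequality of \Cref{cor:GNSinBF} together with the observation that $\| \chi_U \|_{L_{1^*}(U)} = |U|^{1/1^*}$ is infinite when $|U| = +\i$ and $m \ge 2$. The only cosmetic difference is that the paper re-runs the approximating-net argument of \Cref{thm:innerappro} at the level of $\chi_U$, whereas you apply \eqref{eq:GNSinBF} directly to $\chi_U$ — a harmless (indeed cleaner) shortcut, since that inequality is stated for every $f \in \BF(U)$.
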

	
	\begin{proof}
		Let $U$ have finite anexometer. Then there is a net $f_\alpha$ approximating $\chi_U$ as in \Cref{thm:innerappro}. By \Cref{cor:GNSinBF},
		\begin{align*}
			\left| U \right|^{1 / 1^*} = \| \chi_U \|_{L^{1^*}(U) } \le \liminf \| f_\alpha \|_{L^{1^*}(U) }
			& \le \omega_m \lim \| \nabla f_\alpha \|_1 \\
			& = \omega_m \| \chi_U \|_{BF(U) } < + \i \qedhere.
		\end{align*}
	\end{proof}
	
	\Cref{cor:finmass} may fail for $m = 1$. For example, $\R$ has finite anexometer as
	$$
	\| \chi_\R \|_{BF(\R) } = \sup \left\{ u(+ \i) - u(- \i) \colon u \in \DM(\R), \, \| u \|_\i \le 1 \right\} = 2.
	$$
	Here, we denote the limit of $u(t)$ as $t \to \pm \i$ by $u(\pm \i)$. The limit exists because the gradient of $u \in \DM(\R)$ is a finite Radon measure.
	
	\begin{corollary} \label{cor:innerappro2}
		If $f \in \BF(U; [a, b] )$ is continuous on $U$, then there is a \emph{sequence} $f_n \in C^\i_c(U; [a, b] )$ satisfying $\sup_n \| \nabla f_n \|_1 \le \| f \|_{BF(U) }$ such that $f_n$ converges to $f$ uniformly on every compact set $K \subset U$ and $\nabla f_n$ converges to $\mu_f$ weakly* in $\BD'(U)$.
	\end{corollary}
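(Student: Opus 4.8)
The plan is to transport the net supplied by \Cref{cor:testdens} into a \emph{metrizable} topology, where a sequence can be extracted; the weak* topology of $\DB'(U)$ is itself unsuited to this. Write
$$
A \coloneqq \left\{ g \in C_{\i, c}(U; [a, b] ) \colon \| \nabla g \|_{L_1(U) } \le \| f \|_{BF(U) } \right\}.
$$
By \Cref{cor:testdens} there is a net $f_\alpha \in A$ with $f_\alpha \to f$ weakly* in $\MM'_{\DIV}(U)$ and $\nabla f_\alpha \to \mu_f$ weakly* in $\DB'(U)$. As observed in the footnote to \Cref{cor:GNSinBF}, every compactly supported $g \in L_\i(U)$ induces, via $g \, \mathrm{d}x$, an element of $\MM_{\DIV}(U)$; hence $\int_U f_\alpha g \, \mathrm{d}x \to \int_U f g \, \mathrm{d}x$ for all such $g$, i.e.\ $f_\alpha \to f$ weakly in $L_1$ on compact subsets of $U$.

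The core step is to show that $f$ lies in the closure of $A$ in the Fréchet space $C(U)$ carrying the topology of uniform convergence on compacta. Fix a mollifier $\rho_\delta$ and, momentarily, a scale $\delta > 0$, and set $M \coloneqq \max\{ |a|, |b| \}$. The mollified differences obey the equi-Lipschitz estimate
$$
\| \nabla [ (f_\alpha - f) * \rho_\delta ] \|_\i = \| (f_\alpha - f) * \nabla \rho_\delta \|_\i \le 2 M \| \nabla \rho_\delta \|_{L_1}
$$
uniformly in $\alpha$, and converge to $0$ pointwise, since $( (f_\alpha - f) * \rho_\delta )(x) = \int_U (f_\alpha - f)(y) \rho_\delta(x - y) \, \mathrm{d}y \to 0$ by the weak convergence applied to $g = \rho_\delta(x - \cdot)$. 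The net form of the Arzelà–Ascoli theorem thus gives $(f_\alpha - f) * \rho_\delta \to 0$ uniformly on every compact set. As $f$ is continuous, $f * \rho_\delta \to f$ uniformly on compacta as $\delta \to 0$. Hence, given compact $K \subset U$ and $\e > 0$, choose $\delta \in (0, \dist(K, \p U))$ with $\sup_K | f * \rho_\delta - f | < \e/2$. If the index set $\{ \alpha \colon \dist(\supp f_\alpha, \p U) > \delta \}$ is cofinal, pick within it an $\alpha$ so large that $\sup_K | (f_\alpha - f) * \rho_\delta | < \e/2$; then $f_\alpha * \rho_\delta \in A$ — mollification preserves the range $[a, b]$, the bound $\| \nabla(f_\alpha * \rho_\delta) \|_1 \le \| \nabla f_\alpha \|_1$, and, at this scale, compact support in $U$ — and $\sup_K | f_\alpha * \rho_\delta - f | < \e$. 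Otherwise the supports $\supp f_\alpha$ eventually lie within distance $\delta < \dist(K, \p U)$ of $\p U$, so $f_\alpha$ vanishes on $K$ eventually, forcing $f \equiv 0$ on $K$, and the admissible function $0 \in A$ already meets the neighbourhood. In either case $A$ meets every such neighbourhood, so $f$ lies in the asserted closure. Since $U$ is $\sigma$-compact this topology is metrizable, whence the closure is sequential: there is a \emph{sequence} $f_n \in A$ with $f_n \to f$ uniformly on every compact set, delivering the range, the gradient bound, and the claimed uniform convergence.

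Finally, $\nabla f_n \to \mu_f$ weakly* in $\DB'(U)$: for $u \in \DM(U)$ one has $\langle \nabla f_n, u \rangle = - \int_U f_n \, \mathrm{d} \DIV u$ by membership in $K_3$ from \Cref{thm:innerappro}, and $\langle \mu_f, u \rangle = - \int_U f \, \mathrm{d} \DIV u$ by \Cref{lem:muf}. Splitting the \emph{finite} measure $\DIV u$ into a compact part, on which $f_n \to f$ uniformly, and a tail of arbitrarily small total variation, controlled by $\| f_n \|_\i, \| f \|_\i \le M$, shows $\langle \nabla f_n, u \rangle \to \langle \mu_f, u \rangle$; uniform boundedness $\| \nabla f_n \|_{\DB'} = \| f_n \|_{BF} = \| \nabla f_n \|_1 \le \| f \|_{BF}$ together with the density of $\DM(U)$ in $\DB(U)$ upgrades this to weak* convergence. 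The principal difficulty is exactly the net-to-sequence passage, resolved by pushing the convergence into the metrizable space $(C(U), \text{compact convergence})$; the technical heart is converting the merely weak $L_1$-type convergence of the net into \emph{uniform}-on-compacta convergence of mollified approximants while respecting both the hard range constraint $[a, b]$ and the sharp bound $\| \nabla \cdot \|_1 \le \| f \|_{BF}$, which forbids cut-offs and forces the care with compact support near $\p U$.
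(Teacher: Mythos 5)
Your overall plan---push the approximating net into the metrizable space of compact-uniform convergence and extract a sequence there---is the same as the paper's, but your mechanism for upgrading weak convergence to compact-uniform convergence (mollification at a fixed scale plus a net version of Arzel\`a--Ascoli) is genuinely different from the paper's (which uses a Mazur-type argument in the Fr\'echet space $C(U)$). Unfortunately, your mechanism has a real gap at exactly the point you yourself flag as delicate: membership of the mollified functions in $A$. The dichotomy you use to secure $f_\alpha * \rho_\delta \in C_{\i, c}(U; [a, b])$ misreads the meaning of $\dist(\supp f_\alpha, \p U) \le \delta$: this says only that \emph{some} point of $\supp f_\alpha$ lies within $\delta$ of $\p U$, not that $\supp f_\alpha$ is contained in the $\delta$-neighbourhood of $\p U$. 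Hence in the ``otherwise'' branch you cannot conclude that $f_\alpha$ vanishes on $K$. Concretely, take $U = B_1(0)$, $f \equiv 1$ (i.e.\ $f = \chi_U$ on $U$, continuous there), and approximants with $\supp f_\alpha = \overline{B_{1 - 1/\alpha}(0)}$: for every fixed $\delta > 0$ the set $\left\{ \alpha \colon \dist(\supp f_\alpha, \p U) > \delta \right\}$ fails to be cofinal, yet $f_\alpha \to 1$ on $K$, so the hypothesis of your first branch and the conclusion of your second branch both fail---and indeed $f_\alpha * \rho_\delta$ is no longer supported in $U$ once $1/\alpha < \delta$. This situation is not peripheral: whenever $f$ does not vanish identically near $\p U$ (the case of interest), the supports of \emph{any} admissible net must invade all of $U$, since cofinal containment of the supports in $\left\{ \dist(\cdot, \p U) > \delta \right\}$ would force $f \equiv 0$ outside that set by the weak convergence you established. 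Mollifying instead at an $\alpha$-dependent scale $\delta_\alpha < \dist(\supp f_\alpha, \p U)$ restores membership in $A$ but destroys the equi-Lipschitz bound $2 M \| \nabla \rho_{\delta_\alpha} \|_{L_1} \sim \delta_\alpha^{-1}$ on which your Arzel\`a--Ascoli step rests, so the argument does not close.

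The repair is available inside your own setup, and it is what the paper does. You only used the weak* convergence of $f_\alpha$ in $\MM'_{\DIV}(U)$, but \Cref{thm:innerappro} and \Cref{cor:testdens} give convergence weakly* in $\MM'(U)$, i.e.\ tested against \emph{all} finite Radon measures (for continuous $f$ one may take $\bar{f} = f$ there). For each compact $K \subset U$ this says precisely that $\left. f_\alpha \right|_K \to \left. f \right|_K$ weakly in the Banach space $C(K)$, hence $f_\alpha \to f$ weakly in the Fr\'echet space $C(U)$. Since $A$ is convex and all three constraints---range in $[a, b]$, the bound $\| \nabla \cdot \|_{L_1} \le \| f \|_{BF}$, and compact support in $U$---are stable under finite convex combinations, Mazur's lemma (weak and metric closures of a convex set coincide) produces a \emph{sequence} of convex combinations of the $f_\alpha$ converging to $f$ uniformly on compact sets with all constraints intact; no mollification, and hence no support problem, ever arises. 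Your final paragraph---splitting the finite measure $\DIV u$ into a compact part and a small tail, then using the uniform bound and the density of $\DM(U)$ in $\DB(U)$---is correct as written and would complete the repaired proof.
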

	
	\begin{proof}
		Let $g_\alpha$ be an approximating net for $f$ as in \Cref{thm:innerappro}. One may choose $g_\alpha \in C^\i_c(U; [a, b] )$ by \Cref{cor:testdens}. Then the restriction $\left. g_\alpha \right|_K$ converges to $\left. f \right|_K$ weakly in $C(K)$ because $g_\alpha$ converges to $f$ weakly* in $\MM'(U)$. By \cite[Ch. 4, §4, IV.48, Prop. 14]{Bo} and the subsequent remark, this implies that $g_\alpha$ converges to $f$ weakly in the Frechet space of continuous functions $C(U)$ equipped with the topology of uniform convergence on compact sets. Therefore, $f$ belongs to the closed convex hull of $\left\{ g_\alpha \right\}$ in this space by the Hahn-Banach separation theorem for closed convex sets. Hence, as $C(U)$ is a metric space, there is a sequence $f_n$ belonging to the convex hull of the net $\left\{ g_\alpha \right\}$ such that $f_n$ converges to $f$ uniformly on compact subsets of $U$. As $f_n$ belongs to the convex hull of $\left\{ g_\alpha \right\}$, it satisfies $f_n \in C_0(U; [a, b] )$ and $\sup_n \| \nabla f_n \|_{L^1} \le \| f \|_{BF(U) }$. By the Banach-Alaoglu theorem, the sequence $\nabla f_n$ is relatively weakly* compact in $\BD'(U)$. It is immediate that every weak* limit of a convergent subnet agrees with the unique functional $\mu_f \in \BD'(U)$ satisfying \eqref{eq:mufgauß}. Therefore, $\nabla f_n$ converges to $\mu_f$ weakly* in $\BD'(U)$ as claimed.
	\end{proof}
	
	\subsection{Another seminorm}
	
	Alas, by contrast to the BV seminorm, it is not clear if the bounded fluctuation seminorm enjoys good lower semicontinuity properties along a sequence that converges a.e. with respect to Lebesgue measure. To overcome this obstacle, we introduce another seminorm for which this property can be shown and which agrees with the BF seminorm on sufficiently regular functions.	Let
	$$
	L \coloneqq \left\{ u \in \DM(U) \cap C^\i(U) \colon \DIV u \in L^1(U) \cap C_b(U) \right\}.
	$$
	For $f \in L^1(U)$, consider the seminorm
	$$
	\vert f \vert_{BF(U) } \coloneqq \sup \left\{ \int_U f \, \mathrm{d} \DIV u \colon u \in L, \, \| u \|_\i \le 1 \right\},
	$$
	which is dominated by $\| \cdot \|_{BF}$ wherever both are defined as $L \subset \DM(U)$.
	
	\begin{proposition} \label{prop:lscseminorm}
		Let $f_\alpha \in L^1(U)$ be a net converging to $f \in L^1(U)$ in $L^1(V)$ for every bounded subset $V \subset U$. Then
		\begin{equation} \label{eq:lscseminorm}
			\vert f \vert_{BF(U) } \le \liminf \vert f_\alpha \vert_{BF(U) }.
		\end{equation}
	\end{proposition}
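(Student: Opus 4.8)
The plan is to exploit that every $u \in L$ has a divergence which is an honest function: since $\DIV u \in L_1(U) \cap C_b(U)$, the pairing reads $\int_U f \, \mathrm{d} \DIV u = \int_U f \, (\DIV u) \, \mathrm{d}x$, and likewise for each $f_\alpha$. A direct ``supremum of lower semicontinuous functionals'' argument is obstructed by the fact that the hypothesis only furnishes $L_1$-convergence on \emph{bounded} subsets, whereas the density $\DIV u$ need not be compactly supported. I would remove this obstruction by a cutoff. Fix $u \in L$ with $\| u \|_\i \le 1$ and choose smooth cutoffs $\eta_R \in C_{\i, c}(\R^m; [0, 1])$ with $\eta_R \equiv 1$ on $B_R$, $\supp \eta_R \subset B_{2R}$, and $\| \nabla \eta_R \|_\i \le C / R$. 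Set $u_R \coloneqq \eta_R u$. Then $u_R \in L$, $\| u_R \|_\i \le 1$, and $\DIV u_R = \eta_R \DIV u + \nabla \eta_R \cdot u$ is bounded, continuous, and supported in the bounded set $\overline{B_{2R}} \cap U$.

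For fixed $R$, the density $\DIV u_R$ is bounded and vanishes outside $V \coloneqq U \cap B_{2R}$, a bounded subset of $U$, so
$$
\left| \int_U (f_\alpha - f) \, \mathrm{d} \DIV u_R \right| \le \| \DIV u_R \|_\i \, \| f_\alpha - f \|_{L_1(V) } \longrightarrow 0
$$
by hypothesis. Since $u_R \in L$ with $\| u_R \|_\i \le 1$, the definition of $\vert \cdot \vert_{BF(U) }$ gives $\int_U f_\alpha \, \mathrm{d} \DIV u_R \le \vert f_\alpha \vert_{BF(U) }$ for every $\alpha$. Passing to the limit along $\alpha$ therefore yields
$$
\int_U f \, \mathrm{d} \DIV u_R = \lim_\alpha \int_U f_\alpha \, \mathrm{d} \DIV u_R \le \liminf_\alpha \vert f_\alpha \vert_{BF(U) }.
$$

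It remains to send $R \to \i$ and recover the integral against $u$ itself. Splitting $\int_U f \, \mathrm{d} \DIV u_R = \int_U f \eta_R \, \DIV u \, \mathrm{d}x + \int_U f \, \nabla \eta_R \cdot u \, \mathrm{d}x$, the first term converges to $\int_U f \, \mathrm{d} \DIV u$ by dominated convergence, since $f \, \DIV u \in L_1(U)$ (because $f \in L_1$ and $\DIV u \in L_\i$), $\eta_R \to 1$ pointwise, and $0 \le \eta_R \le 1$. The second term is the only real subtlety; I bound it using $|u| \le 1$ and the support of $\nabla \eta_R$ in the annulus $B_{2R} \setminus B_R$ by
$$
\left| \int_U f \, \nabla \eta_R \cdot u \, \mathrm{d}x \right| \le \frac{C}{R} \int_{U \setminus B_R} |f| \, \mathrm{d}x \le \frac{C}{R} \| f \|_{L_1(U) },
$$
which tends to $0$ precisely because $f \in L_1(U)$. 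Hence $\int_U f \, \mathrm{d} \DIV u \le \liminf_\alpha \vert f_\alpha \vert_{BF(U) }$, and taking the supremum over all admissible $u \in L$ proves \eqref{eq:lscseminorm}. The crux is thus the cutoff, which converts local $L_1$-convergence into a usable statement while simultaneously preserving the norm bound $\| u_R \|_\i \le 1$ and controlling the commutator term $\nabla \eta_R \cdot u$; the global integrability $f \in L_1(U)$ is exactly what forces the latter to vanish in the limit.
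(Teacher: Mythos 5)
Your proof is correct and is essentially the paper's own argument: both truncate the test field by smooth cutoffs so that its divergence becomes bounded with bounded support (making $f \mapsto \int_U f \, \mathrm{d} \DIV (\eta u)$ continuous under local $L_1$-convergence), use the product rule $\DIV(\eta u) = \eta \DIV u + \nabla \eta \cdot u$, and recover the full supremum by letting the cutoff exhaust $U$, with $f \in L_1(U)$ and the uniform gradient bound killing the commutator term. The only difference is organizational — the paper phrases this as $\vert f \vert_{BF} = \sup_n \sup_{u \in M_n(B)}$ being a supremum of lower semicontinuous functionals, while you fix $u$, pass to the limit in $\alpha$, then in $R$, and take the supremum last.
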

	
	\begin{proof}
		Let $V_n \coloneqq B_n(0) \cap U$, $\bar{\eta}$ be the standard convolution kernel and $\eta(x) \coloneqq 2^{-m} \bar{\eta} \left( \tfrac{x}{2} \right)$ so that $\supp(\eta) = \bar{B}_{\tfrac{1}{2} }(0)$. The sequence of functions $\varphi_n \coloneqq \eta * \chi_{V_n}$ is bounded in $W^{1, \i}(U; [0, 1] )$ and satisfies $\varphi_n(x) = 1$ if $x \in V_n$ and $\varphi_n(x) = 0$ if $x \in \R^m \setminus V_{n + 1}$. Define the multiplication operators $M_n \colon \DM(U) \to \DM(U) \colon u \to \varphi_n u$. Then, if $B_L = \left\{ u \in L \colon \| u \|_\i \le 1 \right\}$, there holds
		\begin{equation} \label{eq:isotony}
			M_n(B) \subset M_{n + 1}(B) \subset B
		\end{equation}
		because $\varphi_n \varphi_{n + 1} = \varphi_n$. Moreover, for every $u \in B_L$, there holds $\DIV \left( \varphi_n u \right) = \nabla \varphi_n \cdot u + \varphi_n \DIV u$. Consequently, whenever $u \in B_L$ and $f \in L^1(U)$, since $\varphi_n$ is uniformly bounded and $\nabla \varphi_n$ converges to the constant null function uniformly on every bounded set, the dominated convergence theorem implies
		\begin{equation} \label{eq:pointwisecvg}
			\lim_n \int_U f \, \mathrm{d} \DIV \left( \varphi_n u \right) = \int_U f \, \mathrm{d} \DIV u.
		\end{equation}
		Combining \eqref{eq:isotony} and \eqref{eq:pointwisecvg} implies
		$$
		\vert f \vert_{BF(U) } = \sup \left\{ \int_U f \, \mathrm{d} \DIV u \colon u \in B \right\} = \sup_n \sup \left\{ \int_U f \, \mathrm{d} \DIV u \colon u \in M_n(B) \right\}.
		$$
		Since every function
		$$
		f \mapsto \sup \left\{ \int_U f \, \mathrm{d} \DIV u \colon u \in M_n(B) \right\}, \quad n \in \N,
		$$
		is lower semicontinuous along the net $f_\alpha$ by being a supremum of continuous functions, the same is true for their supremum, which is \eqref{eq:lscseminorm}.
	\end{proof}
	
	\begin{lemma} \label{lem:seminormid}
		Let $f \colon U \to \R$ be a bounded, $\bar{\BB}(U)$-measurable function. If
		\begin{equation} \label{eq:disco null}
			\HH^{m - 1} \left( \left\{ x \in U \colon f \text{ is discontinuous in } x \right\} \right) = 0,
		\end{equation}
		then
		\begin{equation} \label{eq:normid}
			\| f \|_{BF(U) } = \vert f \vert_{BF(U) }.
		\end{equation}
	\end{lemma}
	
	\begin{proof}
		The statement remains true if $f$ is modified on an $\HH^{m - 1}$-null set by \Cref{thm:semiker}. Thus, we may assume $f$ to be Borel measurable. It suffices to prove that $\vert f \vert_{BF(U) } \ge \| f \|_{BF(U) }$. Given $\e > 0$, let $u \in\DM(U)$ with $\| u \|_\i \leq 1$ such that
		\begin{equation} \label{eq:normineq}
			\int_U f \, \mathrm{d} \DIV u > \min\{ \e^{-1}, \| f \|_{BF(U)} \} - \e.
		\end{equation}
		By \Cref{lem:MSinDM}, there is a sequence of functions $u_k \in \DM(U) \cap C^\i(U; \R^m)$ with $\DIV u_k \in L^1(U) \cap C_b(U)$, $\| u_k \|_\i \leq 1$ such that $\DIV u_k$ converges to $\DIV u$ weakly* in $\MM(U)$ and $\| \DIV u_k \|(U)$ converges to $\| \DIV u \|(U)$. Since $C_0(U)$ is separable, we may extract from every bounded sequence in $\MM(U)$ a weak* convergent subsequence by the sequential Banach-Alaoglu theorem. Therefore, upon passing to a subsequence (not relabeled), the positive part $[\DIV u_k]^+$ and the negative part $[\DIV u_k]^-$ converge weakly*. By the convergence of total mass and weak* lower semicontinuity of the total variation norm, $\lim_k \| [\DIV u_k]^+ \|(U) = \| [\DIV u]^+ \|(U)$, so $[\DIV u_k]^+$ even converges in the weak topology that $\MM(U)$ carries in duality with $C_b(U)$. The same is true for $[\DIV u_k]^-$. Note that $\lim_k [\DIV u_k]^+ - \lim_k [\DIV u_k]^- = \DIV u$ by the weak* convergence of $\DIV u_k$. The portmanteau theorem \cite[Thm. 13.16(iii)]{Kl} and $\DIV u \ll \HH^{m - 1} \resmes U$ together with \eqref{eq:disco null} imply
		$$
		\int_U f \, \mathrm{d} \DIV u = \lim_k \int_U f \, \mathrm{d} \DIV u_k \le \vert f \vert_{BF(U) },
		$$
		which in combination with \eqref{eq:normineq} implies $\vert f \vert_{BF(U) } > \min\{ \e^{-1}, \| f \|_{BF(U)} \} - \e$.
	\end{proof}
	
	\section{Equivalent conditions for a finite anexometer} \label{sec:divthm}
	
	\begin{proposition} \label{prop:approx2}
		If there is a net $f_\alpha \in C_0(U; [0, 1] )$ with $\sup_\alpha \| \nabla f_\alpha \|_1 < + \i$ such that $f_\alpha$ converges to $\chi_U$ weakly* in $\MM'_{\DIV}(U)$, e.g., a net provided by \Cref{thm:innerappro} if $U$ has finite anexometer, then there is a net of smooth sets $U_\alpha \subset \subset U$ with $\sup_\alpha \| D \chi_{U_\alpha} \|(U) < + \i$ such that $\chi_{U_\alpha}$ converges to $\chi_{U}$ in $L^1_{\text{loc} }(U)$.
	\end{proposition}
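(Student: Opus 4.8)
The plan is to realise the sets $U_\alpha$ as superlevel sets of mollified versions of the $f_\alpha$, following the classical smoothing scheme for sets of finite perimeter \cite[Thm. 3.42]{AFP}: the coarea formula controls the perimeter, Sard's theorem produces smoothness, and a Gauss--Green estimate bounds $\| \chi_{U_\alpha} \|_{BF}$ by the perimeter. The only non-routine point is that the hypothesis provides mere weak* convergence in $\MM'_{\DIV}(U)$, while $L_{1, \text{loc}}$-convergence of the resulting sets is required; the first task is therefore to upgrade the mode of convergence.

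First I would show that $f_\alpha \to \chi_U$ in $L_{1, \text{loc}}(U)$. The key point is that for every ball $V \subset\subset U$ the measure $\chi_V \mathcal{L}^m$ belongs to $\MM_{\DIV}(U)$: solving $\Delta w = \chi_V$ on $\R^m$ via the Newtonian potential yields a field $u \coloneqq \nabla w$ that is bounded (because $\chi_V$ is bounded with compact support) and satisfies $\DIV u = \chi_V$, so $u|_U \in \DM(U)$ with $\| \DIV u \|(U) = |V| < + \i$. Pairing the weak* convergence with this measure gives $\int_V f_\alpha \, \mathrm{d}x \to |V|$, and since $0 \le f_\alpha \le 1$ the nonnegative functions $1 - f_\alpha$ obey $\int_V (1 - f_\alpha) \, \mathrm{d}x \to 0$, i.e. $f_\alpha \to 1 = \chi_U$ in $L_1(V)$. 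As balls exhaust $U$, this yields the claimed $L_{1, \text{loc}}$-convergence.

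Next I would smooth and pass to level sets. Set $C \coloneqq \sup_\alpha \| \nabla f_\alpha \|_1$ and, extending $f_\alpha$ by zero, put $f_\alpha^\delta \coloneqq \eta_\delta * f_\alpha \in C_\i(\R^m; [0, 1])$. Young's inequality gives $\| \nabla f_\alpha^\delta \|_1 \le C$, and the standard $W^{1,1}$ estimate gives $\| f_\alpha^\delta - f_\alpha \|_{L_1} \le C \delta$ \emph{uniformly} in $\alpha$. Because $f_\alpha \in C_0(U)$, the set $\{ f_\alpha > 1/6 \}$ is relatively compact in $U$, and one checks that $\{ f_\alpha^\delta > t \} \subset \{ f_\alpha > 1/6 \} + B_\delta \subset\subset U$ for all $t \ge 1/3$ once $\delta$ is small enough; thus high superlevel sets of $f_\alpha^\delta$ are compactly contained in $U$. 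For each such pair the coarea formula yields $\int_{1/3}^{2/3} P(\{ f_\alpha^\delta > t \}) \, \mathrm{d}t \le C$, so the set of $t \in (1/3, 2/3)$ with $P(\{ f_\alpha^\delta > t \}) \le 6C$ has measure at least $1/6$; intersecting it with the regular values of $f_\alpha^\delta$ (full measure by Sard) lets me select $t_{\alpha, \delta}$ for which $U_{\alpha, \delta} \coloneqq \{ f_\alpha^\delta > t_{\alpha, \delta} \}$ is a smooth set with $\overline{U_{\alpha, \delta}} \subset\subset U$ and $P(U_{\alpha, \delta}) \le 6C$. I would index the new net by $\{ (\alpha, \delta) \}$ directed by $\alpha \le \alpha'$ and $\delta \ge \delta'$, so that the uniform bound $C \delta$ together with the previous step gives $f_\alpha^\delta \to \chi_U$ in $L_{1, \text{loc}}(U)$.

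It remains to bound the fluctuation and verify convergence. Since each $U_{\alpha, \delta}$ is a smooth bounded set with $\overline{U_{\alpha, \delta}} \subset U$, the Gauss--Green formula for $\DM$-fields \cite{CT, Sh2} gives, for $u \in \DM(U)$ with $\| u \|_\i \le 1$, that $\int_U \chi_{U_{\alpha, \delta}} \, \mathrm{d} \DIV u = \DIV u \, (U_{\alpha, \delta})$ equals the integral over $\p U_{\alpha, \delta}$ of an interior normal trace of modulus at most $\| u \|_\i$, whence $\| \chi_{U_{\alpha, \delta}} \|_{BF} \le P(U_{\alpha, \delta}) \le 6C$ uniformly. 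Finally, on $\{ f_\alpha^\delta \le t_{\alpha, \delta} \}$ one has $1 - f_\alpha^\delta \ge 1/3$, so for every compact $K \subset U$,
\begin{equation*}
	\int_K \left| \chi_{U_{\alpha, \delta}} - \chi_U \right| \mathrm{d}x = \int_K \chi_{\{ f_\alpha^\delta \le t_{\alpha, \delta} \}} \, \mathrm{d}x \le 3 \int_K \left( 1 - f_\alpha^\delta \right) \mathrm{d}x \longrightarrow 0,
\end{equation*}
which gives $\chi_{U_{\alpha, \delta}} \to \chi_U$ in $L_{1, \text{loc}}(U)$. I expect the main obstacle to be the opening step: identifying $\chi_V \mathcal{L}^m$ as an admissible test measure in $\MM_{\DIV}(U)$ and exploiting the one-sided constraint $f_\alpha \le 1$ to convert weak* into genuine $L_1$-convergence. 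Once this is in place, the coarea/Sard selection, the compact-containment check for $C_0$ data, and the perimeter bound for $\| \cdot \|_{BF}$ are all routine, provided the net is reindexed so that the uniform mollification estimate $\| f_\alpha^\delta - f_\alpha \|_{L_1} \le C\delta$ applies.
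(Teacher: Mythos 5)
Your proposal is correct in substance and follows the same overall scheme as the paper's proof: upgrade the weak* convergence to $L_{1, \text{loc}}(U)$ convergence, then combine the coarea formula, Sard's theorem and convolution smoothing to extract smooth, compactly contained superlevel sets with a uniform perimeter bound. Within this scheme you deviate in two places, both to your credit. For the convergence upgrade, the paper pairs against compactly supported bounded divergences (the footnote to \Cref{cor:GNSinBF}) and then invokes Rellich compactness; you instead test against the single measure $\chi_V \, \mathrm{d}x \in \MM_{\DIV}(U)$ (via the Newtonian potential) and exploit $0 \le f_\alpha \le 1$ together with the fact that the limit is identically $1$ on $U$, so that $\int_V (1 - f_\alpha) \, \mathrm{d}x \to 0$ \emph{is} $L_1(V)$-convergence. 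This positivity trick is a genuine simplification, since compactness arguments for nets are delicate. Also, your explicit derivation of $\| \chi_{U_{\alpha, \delta}} \|_{BF} \le P(U_{\alpha, \delta})$ from the interior normal trace of \cite{CT, Sh2} addresses a point that the paper's own proof leaves implicit (it only establishes the perimeter bound, which is all the main theorem uses downstream).

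There is, however, one step you must justify, and it is exactly where a careless proof would fail. You mollify the \emph{zero-extension} of $f_\alpha$ and assert that Young's inequality gives $\| \nabla f_\alpha^\delta \|_1 \le C$. This is valid only if the distributional gradient of the extension $\tilde f_\alpha$ is the zero-extension of $\nabla f_\alpha$, i.e., carries no singular part on $\p U$. For a general bounded function with $L_1$ gradient this is false (think of $f \equiv 1$ on $U$), and the spurious boundary term is a jump measure whose mass can be comparable to $\HH^{m - 1}(\p U)$ — infinite precisely for the rough domains this paper is about — which would destroy the uniform perimeter bound and everything downstream. The claim is true here because $f_\alpha \in C_0(U)$ with $\nabla f_\alpha \in L_1(U)$ implies $f_\alpha \in \WW_0(U)$; this is exactly the example worked out below \Cref{lem:muf} (approximate by $(f_\alpha - \e)^+$, which has compact support, and mollify), and it must be invoked. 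The paper's proof avoids the issue altogether by cutting first ($V_\alpha = \{ f_\alpha > r_\alpha \} \subset \subset U$), mollifying the indicator $\chi_{V_\alpha}$, and cutting a second time with Sard, so that convolution is only ever applied to compactly supported objects. With the $\WW_0(U)$ fact cited, and with your index set restricted to the admissible pairs $(\alpha, \delta)$ for which $\delta$ lies below the $\alpha$-dependent threshold ensuring $\{ f_\alpha^\delta > 1/3 \} \subset \subset U$ (these pairs still form a directed set), your single-stage argument goes through.
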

	
	\begin{proof}
		\Cref{lem:div rich}, the weak* convergence of $f_\alpha$ to $\chi_U$ in $\MM'_{\DIV}(U)$, and the Rellich compact embedding theorem together imply
		\begin{equation} \label{eq:falphcvg}
			f_\alpha \to \chi_U \text{ in } L^1_{\text{loc} }(U).
		\end{equation}
		The co-area formula \cite[Thm. 3.40]{AFP} implies
		$$
		\sup_\alpha \| \nabla f_\alpha \|_{L^1} = \sup_\alpha \int_0^1 P \left( \left\{ x \in U \colon f_\alpha(x) > r \right\}, U \right) \, \mathrm{d}r \le C
		$$
		so that
		\begin{equation} \label{eq:perimeterctrl}
			\forall \theta \in (0, 1) \, \forall \alpha \, \exists r_\alpha \in (0, \theta) \colon P \left( \left\{ x \in U \colon f_\alpha(x) > r_\alpha \right\}, U \right) \le \theta^{-1} C.
		\end{equation}
		Let $V_\alpha \coloneqq \left\{ x \in U \colon f_\alpha(x) > r_\alpha \right\}$. Setting $g_\alpha \coloneqq \chi_{V_\alpha}$, there holds
		\begin{equation} \label{eq:measurectrl}
			\begin{gathered}
				\left\{ 1 - g_\alpha(x) > \e \right\}
				= \left\{ 1 - g_\alpha(x) > 0 \right\}
				 = \left\{ f_\alpha(x) \le r_\alpha \right\} \\
				\subset \left\{ f_\alpha(x) \le \theta \right\}
				= \left\{ 1 - f_\alpha(x) \ge 1 - \theta \right\} \quad \forall \e \in (0, 1).
			\end{gathered}
		\end{equation}
		By \eqref{eq:falphcvg} and the Markov inequality, the local Lebesgue measure in $U$ of the last set goes to zero as $\alpha$ grows indefinitely, so that
		\begin{equation} \label{eq:galphcvg}
			g_\alpha \to \chi_U \text{ in } L^1_{\text{loc} }(U)
		\end{equation}
		because $g_\alpha$ is bounded in $L^\i(U)$.	Given any net $\e_\alpha > 0$ such that $\lim_\alpha \e_\alpha = 0$, the function $g_\alpha$ may be regularized by convolution smoothing using that $V_\alpha \subset \subset U$ such that the regularization $h_\alpha \in C^\i(U; [0, 1] )$ still has compact support in $U$ and satisfies
		\begin{equation}
			\| \nabla h_\alpha \|_{L^1} \le \theta^{-1} C
		\end{equation}
		by \eqref{eq:perimeterctrl} and $\| h_\alpha - g_\alpha \|_{L^1} \le \e_\alpha$. Therefore
		\begin{equation} \label{eq:halphcvg}
			h_\alpha \to \chi_U \text{ in } L^1_{\text{loc} }(U)
		\end{equation}
		by \eqref{eq:galphcvg}. The co-area formula together with the Sard lemma implies
		\begin{equation} \label{eq:perimeterctrl2}
			\begin{gathered}
				\forall \theta \in (0, 1) \, \forall \alpha \, \exists s_\alpha \in (0, \theta) \colon s_\alpha \text{ is a regular value of } h_\alpha, \\
				P \left( \left\{ x \in U \colon h_\alpha(x) > s_\alpha \right\}, U \right) \le \theta^{-1} C.
			\end{gathered}
		\end{equation}
		Setting $U_\alpha \coloneqq \left\{ x \in U \colon h_\alpha(x) > s_\alpha \right\}$ and arguing as for \eqref{eq:galphcvg} with $\chi_{U_\alpha}$ and $h_\alpha$ taking the roles of $g_\alpha$ and $f_\alpha$ in \eqref{eq:measurectrl}, respectively, shows that
		\begin{equation} \label{eq:cvgL1loc}
			\chi_{U_\alpha} \to \chi_U \text{ in } L^1_{\text{loc} }(U)
		\end{equation}
		by \eqref{eq:halphcvg}. Moreover, $U_\alpha \subset \subset U$ because $h_\alpha$ has compact support in $U$. Combining this with \eqref{eq:perimeterctrl2} and \eqref{eq:cvgL1loc}, all claimed properties of $U_\alpha$ have been proved.
	\end{proof}
	
	\begin{proposition} \label{prop:approx1}
		Let $\left| U \right| < + \i$. If there is a net of smooth sets $U_\alpha \subset \subset U$ satisfying $\liminf_\alpha \| D \chi_{U_\alpha} \|(U) < + \i$ and such that $\chi_{U_\alpha}$ converges to $\chi_U$ in $L^1_{\text{loc} }(U)$, then the set $U$ has finite anexometer.
	\end{proposition}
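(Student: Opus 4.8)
The plan is to reduce $\| \chi_U \|_{BF(U)}$ to the auxiliary seminorm $\vert \cdot \vert_{BF(U)}$ of the preceding subsection, to exploit the lower semicontinuity of the latter under local $L_1$-convergence, and to bound the auxiliary seminorm of each smooth approximant $\chi_{U_\alpha}$ by its perimeter. For the reduction I would note that, viewed as a function on the open set $U$, the indicator $\chi_U$ is identically $1$, hence continuous with empty discontinuity set, and that it lies in $L_1(U)$ because $\vert U \vert < + \i$; \Cref{lem:seminormid} then gives $\| \chi_U \|_{BF(U)} = \vert \chi_U \vert_{BF(U)}$, so it suffices to bound the right-hand side.

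Next I would invoke \Cref{prop:lscseminorm}. Its hypothesis demands convergence in $L_1(V)$ on every bounded $V \subset U$, which I would deduce from the assumed convergence $\chi_{U_\alpha} \to \chi_U$ in $L_{1, \text{loc}}(U)$: since $\chi_U = 1$ on $V$, one has $\int_V \vert \chi_{U_\alpha} - \chi_U \vert \, \mathrm{d}x = \vert V \setminus U_\alpha \vert$, and choosing by inner regularity a compact $K \subset V \subset U$ with $\vert V \setminus K \vert$ arbitrarily small (possible as $\vert V \vert \le \vert U \vert < + \i$) gives $\vert V \setminus U_\alpha \vert \le \vert V \setminus K \vert + \vert K \setminus U_\alpha \vert$, whose last term vanishes along the net by local convergence. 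Thus $\vert \chi_U \vert_{BF(U)} \le \liminf_\alpha \vert \chi_{U_\alpha} \vert_{BF(U)}$.

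The core estimate is $\vert \chi_{U_\alpha} \vert_{BF(U)} \le \| D \chi_{U_\alpha} \|(U)$. For $u \in L$ with $\| u \|_\i \le 1$ the divergence $\DIV u$ is a continuous $L_1$-function, so $\int_U \chi_{U_\alpha} \, \mathrm{d} \DIV u = \int_{U_\alpha} \DIV u \, \mathrm{d}x$; since $U_\alpha$ is a smooth set with $\overline{U_\alpha} \subset U$ and $u$ is smooth, the classical Gauss--Green theorem yields $\int_{U_\alpha} \DIV u \, \mathrm{d}x = \int_{\p U_\alpha} u \cdot \nu \, \mathrm{d} \HH^{m - 1}$, whose modulus is at most $\HH^{m - 1}(\p U_\alpha) = \| D \chi_{U_\alpha} \|(U)$. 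Taking the supremum over such $u$ proves the estimate, and chaining the three steps gives
$$
\| \chi_U \|_{BF(U)} = \vert \chi_U \vert_{BF(U)} \le \liminf_\alpha \vert \chi_{U_\alpha} \vert_{BF(U)} \le \liminf_\alpha \| D \chi_{U_\alpha} \|(U) < + \i,
$$
so $U$ has finite anexometer.

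The principal difficulty is conceptual and is already dissolved by the apparatus built in the previous subsection: $\| \cdot \|_{BF}$ is itself \emph{not} lower semicontinuous under $L_{1, \text{loc}}$-convergence, because its defining supremum ranges over all of $\DM(U)$ and the test fields cannot be carried to the limit along the approximating net. Replacing $\DM(U)$ by the smaller class $L$ restores lower semicontinuity (\Cref{prop:lscseminorm}), while \Cref{lem:seminormid} guarantees that this replacement costs nothing for $\chi_U$, whose discontinuity set is $\HH^{m-1}$-null. Consequently the only genuine verifications left are the elementary perimeter bound above and the harmless upgrade of the convergence mode.
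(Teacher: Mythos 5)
Your proposal is correct and follows essentially the same route as the paper: reduce $\| \chi_U \|_{BF(U)}$ to $\vert \chi_U \vert_{BF(U)}$ via \Cref{lem:seminormid}, upgrade the $L_{1,\text{loc}}$-convergence to $L_1(V)$-convergence on bounded $V$ so that \Cref{prop:lscseminorm} applies, and bound $\vert \chi_{U_\alpha} \vert_{BF(U)}$ by the perimeter of $U_\alpha$. The only cosmetic difference is that the paper cites \cite[§5.1, p. 197, Rem. (iii)]{EG} for the identity $\vert \chi_{U_\alpha} \vert_{BF(U)} = \| D \chi_{U_\alpha} \|(U)$, whereas you verify the (sufficient) inequality directly by the classical Gauss--Green theorem on the smooth set $U_\alpha \subset\subset U$.
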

	
	\begin{proof}
		By $U_\alpha \subset \subset U$ and \cite[§5.1, p. 197, Rem. (iii)]{EG}, there holds
		$$
		\| D \chi_{U_\alpha} \|(U) = \vert \chi_{U_\alpha} \vert_{BF(U) }.
		$$
		The seminorm $\vert \cdot \vert_{BF(U) }$ is lower semicontinuous in $\chi_U$ along the net $\chi_{U_\alpha}$ by \Cref{prop:lscseminorm}. Here, we used that $\chi_U \ge \chi_{U_\alpha}$ for every $\alpha$ so that $\chi_{U_\alpha}$ converges to $\chi_U$ in $L^1(V)$ for every bounded subset $V \subset U$. Consequently, \Cref{lem:seminormid} implies
		$$
		\| \chi_U \|_{BF(U) } = \vert \chi_U \vert_{BF(U) } \le \liminf_\alpha \vert D \chi_{U_\alpha} \vert_{BF(U) } = \liminf_\alpha \| D \chi_{U_\alpha} \|(U) < + \i
		$$
		so that $\chi_U \in \BF(U)$.
	\end{proof}
	
	\begin{proposition} \label{prop:approx3}
		Let $A \subset \R^m$ be a set with $0 < \left| A \right| < + \i$. If there is a net of smooth sets $U_\alpha \subset \subset A$ such that $\chi_{U_\alpha} \to \chi_A$ in $L^1(A)$ and $\sup_\alpha P(U_\alpha) < + \i$, then
		\begin{equation} \label{eq:bndryctrl}
			\HH^{m - 1}(\p A \setminus \mathrm{ext}_* A ) < + \i.
		\end{equation}
		Conversely, if $\HH^{m - 1}(\p A \setminus \mathrm{ext}_* A ) < + \i$, then there is not just a net, but even a sequence of smooth sets $V_n \subset \subset A$ such that $\chi_{V_n} \to \chi_A$ in $L^1(A)$ and $\sup_n P(V_n) < + \i$.
	\end{proposition}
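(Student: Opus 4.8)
The plan is to reduce both implications to the bounded case \cite[Thm. 3.1]{CLT}, which already establishes the equivalence between $\HH^{m - 1}(\p E \setminus \ext_* E) < \i$ and the existence of smooth interior approximations of $E$ with uniformly bounded perimeter whenever $E$ is bounded. The passage from bounded to merely finite-measure sets $A$ is effected by intersecting with balls $B_R(0)$ along \emph{good radii} on which the spherical part of the perimeter is negligible. The starting observation is the coarea formula for $x \mapsto |x|$, giving $\int_0^\i \HH^{m - 1}(A \cap \p B_R) \, \mathrm{d}R = |A| < \i$, whence $\liminf_{R \to \i} \HH^{m - 1}(A \cap \p B_R) = 0$. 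I would fix an increasing sequence of good radii $R_n \to \i$ with $\e_n \coloneqq \HH^{m - 1}(A \cap \p B_{R_n}) \to 0$ and, discarding a Lebesgue-null set of radii (Sard's lemma), with $A \cap B_{R_n}$ of finite perimeter. The key elementary fact is that $B_{R_n}$ is a full neighbourhood of each of its interior points, so the local densities of $A$ and of $A \cap B_{R_n}$ coincide there and $x \in \p A \iff x \in \p(A \cap B_{R_n})$ for $x \in B_{R_n}$; consequently $(\p A \setminus \ext_* A) \cap B_{R_n} = (\p(A \cap B_{R_n}) \setminus \ext_*(A \cap B_{R_n})) \cap B_{R_n}$, and the two one-codimensional quantities differ only by a spherical contribution bounded by $\e_n$.

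For the forward implication, given the interior approximation $U_\alpha \subset\subset A$ with $\sup_\alpha P(U_\alpha) \le C$ and $\chi_{U_\alpha} \to \chi_A$ in $L_1$, I would intersect with a good ball. As $\alpha$ grows, $\chi_{U_\alpha \cap B_{R_n}} \to \chi_{A \cap B_{R_n}}$ in $L_1$, while $U_\alpha \subseteq A$ forces $P(U_\alpha \cap B_{R_n}) \le P(U_\alpha) + \HH^{m - 1}(U_\alpha \cap \p B_{R_n}) \le C + \e_n$, a bound uniform in $\alpha$ \emph{and} $n$. After a routine mollification of the corners along $\p U_\alpha \cap \p B_{R_n}$ and a harmless inward shift from $B_{R_n}$ to $B_{R_n - 1/k}$ to restore strict containment $\subset\subset A \cap B_{R_n}$, these become admissible smooth interior approximations of the bounded set $A \cap B_{R_n}$. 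The bounded case \cite[Thm. 3.1]{CLT} then yields $\HH^{m - 1}(\p(A \cap B_{R_n}) \setminus \ext_*(A \cap B_{R_n})) \le c_m (C + 1)$ with a bound independent of $n$. Combining with the density identity above and continuity of $\HH^{m - 1}$ from below along $(\p A \setminus \ext_* A) \cap B_{R_n} \uparrow \p A \setminus \ext_* A$, I would let $n \to \i$ to obtain \eqref{eq:bndryctrl}.

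For the converse, the inclusion $\p_* A \subseteq \p A \setminus \ext_* A$ (an essential boundary point has positive density, hence lies outside $\ext_* A$, and being neither density $0$ nor $1$ it is a topological boundary point) shows via Federer's theorem that $A$ has finite perimeter, so each $A \cap B_{R_n}$ is a bounded finite-perimeter set with $\HH^{m - 1}(\p(A \cap B_{R_n}) \setminus \ext_*(A \cap B_{R_n})) \le \HH^{m - 1}(\p A \setminus \ext_* A) + \e_n < \i$. For each $n$, \cite[Thm. 3.1]{CLT} supplies smooth sets $W_{n, k} \subset\subset A \cap B_{R_n}$ with $\chi_{W_{n, k}} \to \chi_{A \cap B_{R_n}}$ in $L_1$ as $k \to \i$ and $\limsup_k P(W_{n, k}) \le c_m (\HH^{m - 1}(\p A \setminus \ext_* A) + \e_n)$; since $\interior(A \cap B_{R_n}) \subseteq A$, each $W_{n, k}$ is in fact $\subset\subset A$. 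A diagonal choice $V_n \coloneqq W_{n, k(n)}$ with $k(n)$ large enough that $\| \chi_{V_n} - \chi_{A \cap B_{R_n}} \|_{L_1} \to 0$ then gives $\chi_{V_n} \to \chi_A$ in $L_1(A)$ (using $|A \setminus B_{R_n}| \to 0$ as $|A| < \i$) together with $\sup_n P(V_n) < \i$, which is precisely the asserted \emph{sequence}.

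The hard part is the uniform-in-$R$ perimeter control: a naive intersection with balls makes the spherical perimeter $\HH^{m - 1}(\,\cdot\, \cap \p B_R)$ diverge as $R \to \i$, rendering the bound from \cite[Thm. 3.1]{CLT} useless in the limit. This is exactly what the good-radius selection is designed to defeat, and the crux is to verify that the spherical contribution is simultaneously negligible for the approximating sets (via $U_\alpha \subseteq A$) and for the limit set $A$ itself. The remaining difficulties — mollifying the corners produced by the slicing and re-establishing strict interior containment after intersecting with a ball — are technical and are dispatched by standard convolution smoothing together with Sard's lemma in the choice of slicing radii.
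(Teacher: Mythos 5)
Your converse direction is essentially the paper's own argument: pick good radii by integrating spherical slices of a finite-measure set, apply a bounded-case smooth interior approximation theorem to $A \cap B_{R_n}$, and diagonalize. The one substantive difference is how the boundary of the truncation is controlled: you prove the inclusion $\p(A\cap B_{R_n})\setminus\ext_*(A\cap B_{R_n}) \subseteq (\p A\setminus\ext_* A)\cup(A\cap\p B_{R_n})$ directly (which is correct, and clean), whereas the paper goes through Maggi's formula for $P(A\cap B_r)$ from \cite[Thm. 16.3]{Ma} together with $\p(A\cap B_r)\cap(A\cap B_r)^1 \subset \p A\cap\interior_* A$. Note, however, that the paper cites \cite[Thm. 1.6]{GHL} rather than \cite[Thm. 3.1]{CLT} at this step for a reason: the diagonalization needs approximating sets whose perimeters are bounded by an explicit expression ($P(A\cap B_r) + C_m\HH^{m-1}$ of the inner boundary), uniformly along the good radii, and GHL states exactly such a bound; \cite[Thm. 3.1]{CLT}, read as the equivalence you quote, does not.

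This same point is fatal, as written, to your forward direction, where it is the crux rather than a citation detail. You need the bounded-case theorem to return the \emph{quantitative} estimate $\HH^{m-1}(\p(A\cap B_{R_n})\setminus\ext_*(A\cap B_{R_n})) \le c_m(C+1)$ with a dimensional constant $c_m$, so that the bound is uniform in $n$ and survives the limit $n\to\i$. The equivalence of \cite[Thm. 3.1]{CLT} only gives finiteness for each fixed $n$, and finiteness on every ball is worthless here: an increasing union of sets of finite $\HH^{m-1}$-measure can perfectly well have infinite measure, which is precisely what the proposition must exclude. The uniform constant is in fact extractable from CLT's covering argument, but once you open that proof you find that none of its estimates use boundedness of the ambient set — they are all local and for a fixed approximation index — which is exactly the paper's one-line forward proof: the CLT argument carries over verbatim to $\left|A\right| < +\i$, with no slicing, corner smoothing, or ball truncation at all. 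A secondary gap: your ``harmless inward shift'' from $B_{R_n}$ to $B_{R_n-1/k}$ is not harmless, because the good-radius property $\HH^{m-1}(A\cap\p B_{R_n}) = \e_n$ says nothing about the slices $\HH^{m-1}(\bar U_\alpha\cap\p B_{R_n-1/k})$ at the shifted radii, so the perimeters of the shifted sets are no longer controlled uniformly in $\alpha$ and $n$; repairing this requires re-selecting the shifted radii by a coarea/Lebesgue-point argument. Both defects are fixable, but as submitted the forward implication has a genuine gap at its central step, and the fix makes the detour through bounded sets redundant.
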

	
	\begin{proof}
		The proof when $A$ is bounded and $U_\alpha$ is a sequence instead of a general net \cite[Thm. 3.1]{CLT} carries over to the present setting, as all essential arguments therein are made for a fixed index $\alpha$.
		\\
		Conversely, let $r > 0$ and set $A_r \coloneqq A \cap B_r$, $B_r \coloneqq B_r(0)$ an open ball.
		\begin{equation} \label{eq:fiperi}
			P(A) \le \HH^{m - 1}(\p^* A) \le \HH^{m - 1}(\p A \setminus \mathrm{ext}_* A ) < + \i
		\end{equation}
		by \cite[eq. 3.62]{AFP} and
		\begin{equation} \label{eq:maggid}
			P \left( A_r \right) = P(A; B_r) + P(B_r; \mathrm{int}_* A) + \HH^{m - 1}( \left\{ \nu_A = \nu_{B_r} \right\} )
		\end{equation}
		by \cite[Thm. 16.3, eq. 16.10]{Ma}. Here, $\nu_A$ denotes the measure theoretic outer normal vector field to $A$. Observe $\mathrm{int}_* A_r = \mathrm{int}_* A \cap B_r$ and $\p A_r \subset \p A \cup \p B_r$.\footnote{Let $T$ be a topological space and $M, N \subset T$. Then $\p ( M \cup N ) \subset \p M \cup \p N$.} Hence, we have
		\begin{equation} \label{eq:bdryinc}
			\p A_r \cap \mathrm{int}_* A_r \subset \left( \p A \cup \p B_r \right) \cap \left( \mathrm{int}_* A \cap B_r \right) \subset \p A \cap \mathrm{int}_* A.
		\end{equation}
		Introducing polar coordinates and invoking \cite[eq. 3.62]{AFP} gives
		$$
		\left| \mathrm{int}_* A \right| = \int_0^\i \HH^{m - 1} \left( \mathrm{int}_* A \cap \p B_r \right) \, \mathrm{d}r = \int_0^\i P(B_r; \mathrm{int}_* A) \, \mathrm{d}r < + \i.
		$$
		In particular, there is a sequence of good radii $r_k > 0$ such that
		$$
		r_k \to + \i \text{ and } P(B_{r_k}; \mathrm{int}_* A) \to 0 \text{ as } k \to + \i.
		$$
		Invoking \cite[Thm. 1.6]{GHL} and using \eqref{eq:maggid}, \eqref{eq:fiperi}, and \eqref{eq:bdryinc} provides for every $r > 0$ a sequence $V^r_n \subset \subset A_r$ such that $\chi_{V^r_n} \to \chi_{A_r}$ in $L^1(A_r)$ and
		\begin{align*}
			\limsup_n P(V^r_n)
			& \le P(A_r) + C_m \HH^{m - 1} \left( \p A_r \cap \mathrm{int}_* A_r \right) \\
			& = P(A; B_r) + P(B_r; \mathrm{int}_* A) + \HH^{m - 1}( \left\{ \nu_{\mathrm{int}_* A} = \nu_{B_r} \right\} ) \\
			& + C_m \HH^{m - 1} \left( \p A_r \cap \mathrm{int}_* A_r \right) \\
			& \le P(A) + P(B_r; \mathrm{int}_* A) + \HH^{m - 1}( \p^* \mathrm{int}_* A ) \\
			& + C_m \HH^{m - 1} \left( \p A \cap \mathrm{int}_* A \right) \\
			& \le C + P(B_r; \mathrm{int}_* A).
		\end{align*}
		Therefore, the quantity $\sup_{k \in \N} \limsup_n P(V^{r_k}_n)$ is uniformly bounded because the radii $r_k$ are good. Combining this with the convergence $\chi_{V^r_n} \to \chi_{A_r}$ in $L^1(A_r)$ for every fixed $r > 0$ yields a sequence $V_n$ such that $\chi_{V_n} \to \chi_A$ in $L^1(A)$ and $\sup_n P(V_n) < + \i$ by setting $V_n \coloneqq V^{r_n}_{a_n}$ for some sequence $a_n > 0$ that increases sufficiently rapidly.
	\end{proof}
	
	\begin{theorem} \label{thm:main}
		Let $\emptyset \ne U \subset \R^m$ be an open set with $\left| U \right| < + \i$. The following are equivalent:
		\begin{enumerate}[(i)]
			
			\item The set $U$ has finite anexometer; \label{it:main0}
			
			\item The set $U$ satisfies the divergence theorem of \cref{lem:muf}; \label{it:main1}
			
			\item $\HH^{m - 1} \left( \p U \setminus \mathrm{\ext}_* U \right) < + \i$; \label{it:main2}
			
			\item There is a sequence of smooth sets $V_n \subset \subset U$ with $\sup_n P(U_n) < + \i$ such that $\chi_{V_n} \to \chi_U$ in $L^1(U)$; \label{it:main3}
			
			\item The same as \cref{it:main3} with a \emph{net} $V_\alpha$ instead of the \emph{sequence} $V_n$; \label{it:main4}
			
			\item There is a sequence of functions $f_n \in C^\i_c(U; [0, 1])$ with $\sup_n \| \nabla f_n \|_{L^1} \le \| \chi_U \|_{BF}$ such that $f_n$ converges to $\chi_U$ uniformly on compact subsets of $U$, and $\nabla f_n$ converges to $\mu_{\chi_U}$ weakly* in $\BD'(U)$; \label{it:main5}
			
			\item There is a net of functions $f_\alpha \in C_0(U; [0, 1])$ with $\sup_n \| \nabla f_\alpha \|_{L^1} \le \| \chi_U \|_{BF}$ such that $f_\alpha$ converges to $\chi_U$ weakly* in $\MM'_{\DIV}(U)$, and $\nabla f_\alpha$ converges to $\mu_{\chi_U}$ weakly* in $\BD'(U)$. \label{it:main6}
			
		\end{enumerate}
	\end{theorem}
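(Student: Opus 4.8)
The plan is to prove all seven equivalences by a single closed chain that threads every statement, delegating the genuine analysis to the results already established and reserving new work only for reconciling the various convergence modes. Concretely, I would run
\[
\ref{it:main1} \Longleftrightarrow \ref{it:main0} \Longrightarrow \ref{it:main5} \Longrightarrow \ref{it:main6} \Longrightarrow \ref{it:main4} \Longrightarrow \ref{it:main2} \Longrightarrow \ref{it:main3} \Longrightarrow \ref{it:main0},
\]
so that each condition is reached. The equivalence $\ref{it:main1} \Longleftrightarrow \ref{it:main0}$ is the easy one: by definition finite anexometer means $\chi_U \in \BF(U)$, in which case \Cref{lem:muf} supplies the functional $\mu_{\chi_U} \in \DB'(U)$ realizing the divergence theorem \eqref{eq:divthm}; conversely, if such a $\mu_{\chi_U}$ exists, then pairing with $u$ satisfying $\| u \|_\i \le 1$ and taking the supremum yields $\| \chi_U \|_{BF} \le \| \mu_{\chi_U} \| < +\i$.

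For the function-approximation steps, the crucial and easily overlooked point in $\ref{it:main0} \Longrightarrow \ref{it:main5}$ is that, although $\chi_U$ is globally discontinuous, its restriction to the \emph{open} set $U$ is the constant $1$ and hence continuous on $U$. Thus $\chi_U \in \BF(U; [0, 1])$ is a \emph{continuous} bounded fluctuation function, and \Cref{cor:innerappro2} applies directly to produce the sequence $f_n \in C_{\i, c}(U; [0, 1])$ of statement \ref{it:main5}. The step $\ref{it:main5} \Longrightarrow \ref{it:main6}$ is then purely formal: a sequence is a net, the range and gradient bounds are inherited, and the two advertised convergences coincide. Indeed, for $f_n \in C_{\i, c}(U)$ and $u \in \DM(U)$, integration by parts gives $\langle \nabla f_n, u \rangle = - \int_U f_n \, \mathrm{d} \DIV u$, so the weak* convergence $\nabla f_n \to \mu_{\chi_U}$ in $\DB'(U)$ is literally the weak* convergence $f_n \to \chi_U$ in $\MM'_{\DIV}(U)$ demanded by \ref{it:main6}.

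The passage to sets occupies the remainder of the chain. Statement \ref{it:main6} is exactly the hypothesis of \Cref{prop:approx2}, which yields a net of smooth sets $V_\alpha \subset \subset U$ with $\sup_\alpha \| \chi_{V_\alpha} \|_{BF} < +\i$ and $\chi_{V_\alpha} \to \chi_U$ in $L_{1, \mathrm{loc}}(U)$. By \eqref{eq:BFintoBV} one has $P(V_\alpha) = \| \chi_{V_\alpha} \|_{BV(U)} \le \| \chi_{V_\alpha} \|_{BF(U)}$, so the perimeters are uniformly bounded; and since $|U| < +\i$ with $V_\alpha \subset U$, the local convergence upgrades to $L_1(U)$ because $|U \setminus V_\alpha| \le |U \setminus V| + \int_V (\chi_U - \chi_{V_\alpha})$ is made small by first fixing a large bounded $V \subset U$ and then letting $\alpha$ grow, giving \ref{it:main4}. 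Applying \Cref{prop:approx3} with $A = U$ (legitimate since $0 < |U| < +\i$) converts this net of bounded-perimeter sets into the Hausdorff bound \ref{it:main2}, and its sequential converse recovers the sequence of \ref{it:main3}. Finally $\ref{it:main3} \Longrightarrow \ref{it:main0}$ is \Cref{prop:approx1}: for $V_n \subset \subset U$ one has $\| D \chi_{V_n} \|(U) = P(V_n)$, hence $\liminf_n \| D \chi_{V_n} \|(U) \le \sup_n P(V_n) < +\i$, and $L_1(U)$-convergence implies $L_{1, \mathrm{loc}}$-convergence, so $\chi_U \in \BF(U)$.

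The genuine difficulty has already been absorbed into the cited results: the generalized Goldstine theorem and the convex-conjugate analysis of the range constraint behind \Cref{thm:innerappro} and \Cref{cor:innerappro2}, and the co-area/Sard/perimeter computations behind \Cref{prop:approx2,prop:approx3}. Within the assembly itself, the only places demanding care are the three flagged above: recognizing that $\chi_U$ is continuous on $U$, so that the \emph{sequential} \Cref{cor:innerappro2} (rather than merely the net of \Cref{thm:innerappro}) is available; the identification of $\DB'$-gradient convergence with $\MM'_{\DIV}$-weak* convergence; and the $L_{1, \mathrm{loc}} \to L_1$ upgrade, which is precisely where the hypothesis $|U| < +\i$ enters. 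I expect this bookkeeping of convergence topologies, together with the net-versus-sequence distinctions, to be the main — if modest — obstacle in writing the proof cleanly.
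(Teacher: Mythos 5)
Your proposal is correct and takes essentially the same route as the paper: the identical closed chain through all seven conditions, driven by the same ingredients (\Cref{lem:muf} and \Cref{def:bndfluc} for i) $\Leftrightarrow$ ii), \Cref{thm:innerappro}/\Cref{cor:innerappro2} for the function approximations, \Cref{prop:approx2}, both parts of \Cref{prop:approx3}, and \Cref{prop:approx1}). Your only deviation — running i) $\Rightarrow$ vi) $\Rightarrow$ vii) instead of the paper's i) $\Rightarrow$ vii) $\Rightarrow$ vi) — is immaterial, since \Cref{cor:innerappro2} is itself proved via \Cref{thm:innerappro}; and your explicit glue steps (the perimeter bound via \eqref{eq:BFintoBV} and the $L_{1,\mathrm{loc}}$-to-$L_1(U)$ upgrade, which should use a \emph{compact} exhaustion of $U$ together with $\left| U \right| < + \infty$ rather than a bounded one) correctly fill in details the paper's proof leaves implicit.
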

	
	\begin{proof}
		$\ref{it:main0} \implies \ref{it:main1}$: by \Cref{lem:muf}. $\ref{it:main0} \impliedby \ref{it:main1}$: Immediate by \Cref{def:bndfluc}.
		$\ref{it:main0} \implies \ref{it:main6}$: by \Cref{thm:innerappro}.
		$\ref{it:main6} \implies \ref{it:main5}$: by \Cref{cor:innerappro2}.
		$\ref{it:main5} \implies \ref{it:main4}$: by \Cref{prop:approx2}.
		$\ref{it:main4} \implies \ref{it:main2}$: by the first part of \Cref{prop:approx3}.
		$\ref{it:main2} \implies \ref{it:main3}$: by the second part of \Cref{prop:approx3}.
		$\ref{it:main3} \implies \ref{it:main0}$: by \Cref{prop:approx1}.
	\end{proof}
	
	\emph{Remarks}:
	
	\begin{enumerate}
		
		\item By \Cref{thm:main}, giving an example of a domain having finite perimeter but infinite anexometer is equivalent to finding an open set $U \subset \R^m$ such that $P(U) < + \i$ but $\HH^{m - 1} \left( \p U \setminus \mathrm{\ext}_* U \right) = + \i$. A set satisfying this condition can be obtained by considering the curve
		$$
		\gamma \colon \left[ 1, \i \right) \to \R^2 \colon t \mapsto t^{-1} \left( \cos(t), \sin(t) \right)
		$$
		and setting $U \coloneqq B_1(0) \setminus \ran \gamma$. Then $\p_* U = \p B_1(0)$ so that $P(U) = \HH^1(\p_* U) = \pi^2$, but $\p U \setminus \ext_* U = \p U = \p B_1(0) \cup \ran \gamma \cup \left\{ 0 \right\}$ with
		$$
		\HH^1(\ran \gamma) = \int_1^\i \| \dot{\gamma}(t) \| \, \mathrm{d}t = + \i.
		$$
		
		\item In contrast to the reduced boundary, which in the terminology of \cite[§3.2.14]{Fe3} is countably $\HH^{m - 1}$ rectifiable in $\R^m$, the relevant boundary in \Cref{thm:main} cannot be such in general: let $K \subset \R^m$ be a compact set that is purely $\HH^{m - 1}$ unrectifiable and has infinite $\HH^{m - 1}$ mass. As the interior of $K$ must be empty by pure unrectifiability, setting $V = B_r(0) \setminus K$ for some open ball satisfying $K \subset B_r(0)$, there results an open set $V$ violating \Cref{thm:main}\ref{it:main2} but whose boundary contains no countably $\HH^{m - 1}$ rectifiable set of infinite $\HH^{m - 1}$ mass. This has several consequences: First, the naive approach suggested by the slit disc example in the introduction, i.e., split the domain into finitely many subdomains having finite perimeter and no inner boundary, then recombine the results, cannot treat all domains covered by \Cref{thm:main}. For the reduced boundary of such a partition will necessarily miss the purely unrectifiable part of the boundary. Second, in general, the Hausdorff measure in \Cref{thm:main}\ref{it:main2} cannot be replaced by one of the more lenient integral geometric measures defined in \cite[§2.10.5(1)]{Fe3} due to \cite[Thm. 3.3.13, Cor. 2.10.48]{Fe3}.
		
		\item It is interesting to compare \Cref{thm:main} with \cite[Thm. 3.18]{SS}. Due to the Hahn-Banach norm preserving extension result, as explained at the end of \Cref{ssec:BF1}, the formulation of the divergence theorem used there is seen to be equivalent to mine, at least for the class of bounded sets with finite perimeter treated in \cite[Thm. 3.18]{SS}. Working under a more demanding assumption, \cite[Thm. 3.18]{SS} requires the existence of $\delta_0 > 0$ such that, setting $U_{-\delta} = \left\{ x \in U \colon \dist(x, \p U) > \delta \right\}$, there holds the uniform estimate
		\begin{equation} \label{eq:unifest}
			\sup_{\delta \in (0, \delta_0) } \HH^{m - 1}( \p U_{-\delta} ) < \i,
		\end{equation}
		which implies all of the equivalent conditions given in \Cref{thm:main} by merit of being sufficient for concluding a divergence theorem that implies the underlying domain to have finite anexometer. However, conversely, there is a set of finite anexometer which violates \eqref{eq:unifest}, as can be seen by \cite[Ex. 1]{Kr}. There, an open set $\Omega_0 \subset [ 0, 2] \times [0, 1] \subset \R^2$ of finite perimeter is constructed as an infinite union of disjoint balls. While this rules out the possibility of $\Omega_0$ having a differentiable boundary, because a bounded domain in $\R^m$ cannot have infinitely many connected components if its boundary is differentiable, the set $\p \Omega_0 \setminus \p^* \Omega_0$ may be seen to coincide with $\left\{ 2 \right\} \times [0, 1]$ so that
		$$
		\HH^1(\p \Omega_0) = \HH^1(\p^* \Omega_0) + \HH^1(\p \Omega_0 \setminus \p^* \Omega_0) = P(\p \Omega_0) + 1 < \i.
		$$
		Moreover, defining $\Omega_{0, \delta} = \left\{ x \in \R^2 \colon d(x, \Omega_0) < \delta \right\}$ to be a $\delta$-tube around $\Omega_0$, it its shown in \cite{Kr} that
		$$
		\sup_{\delta \in (0, \delta_0) } \HH^1(\p \Omega_{0, \delta} ) = \i \quad \forall \delta_0 > 0.
		$$
		Thus, setting $\Omega_1 = \interior \left( \R^2 \setminus \Omega_0 \right)$ and using that $\p \Omega_1 = \p \Omega_0$ by the particular structure of $\Omega_0$, we obtain a planar open set such that
		\begin{gather*}
			\HH^1(\p \Omega_1) = \HH^1(\p \Omega_0) < \i, \\
			\sup_{\delta \in (0, \delta_0) } \HH^1(\p \Omega_{1, - \delta} ) = \sup_{\delta \in (0, \delta_0) } \HH^1(\p \Omega_{0, \delta} ) = \i \quad \forall \delta_0 > 0.
		\end{gather*}
		Consequently, the set $\Omega_1 \times (0, 1)^{m - 2}$ is a counterexample ruling out the possibility that every open domain $U \subset \R^m$ of finite anexometer might actually satisfy the stronger uniform estimate \eqref{eq:unifest}.
		
	\end{enumerate}
	
	\newpage
	
	\appendix
	
	\section{Generalized Goldstine theorem}
	
	The next result generalizes the Goldstine theorem \cite[Satz VIII.3.17]{We} in a natural way. It seems absent from the literature. Its versatility is illustrated by the crucial role it has in proving the central approximation result \Cref{thm:innerappro}.
	
	\begin{theorem} \label{thm:predualdens}
		Let $X$ be a Banach space and $K \subseteq X'$ be a weak* closed convex set containing the origin. The predual polar
		$$
		K_\circ = \left\{ x \in X \colon \langle x', x \rangle_{X', X} \le 1 \quad \forall x' \in K \right\}
		$$
		is weak* dense in the bidual polar
		$$
		K^\circ = \left\{ x'' \in X'' \colon \langle x'', x' \rangle_{x'', X'} \le 1 \quad \forall x' \in K \right\}.
		$$
	\end{theorem}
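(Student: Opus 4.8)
My plan is to reduce the statement to the bipolar theorem for the pairing $(X, X')$, combined with Hahn–Banach separation in the locally convex space $(X'', \sigma(X'', X'))$, whose topological dual is precisely $X'$. Writing $J \colon X \to X''$ for the canonical embedding and $C \coloneqq \overline{J(K_\circ)}^{\sigma(X'', X')}$ for the weak* closure in question, I would first dispose of the easy inclusion $C \subseteq K^\circ$: each $x \in K_\circ$ satisfies $\langle Jx, x'\rangle = \langle x', x\rangle \le 1$ for all $x' \in K$, so $J(K_\circ) \subseteq K^\circ$, and since $K^\circ$ is a polar it is $\sigma(X'', X')$-closed, which gives the inclusion of closures.

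For the reverse inclusion $K^\circ \subseteq C$ I would argue by contradiction. If some $x_0'' \in K^\circ \setminus C$ existed, then, $C$ being a weak* closed convex set containing $0$, strict separation in $(X'', \sigma(X'', X'))$ would furnish $x' \in X'$ and $\gamma \in \R$ with $\langle x'', x'\rangle \le \gamma < \langle x_0'', x'\rangle$ for all $x'' \in C$; testing against $0 \in C$ gives $\gamma \ge 0$, and restricting to $x'' = Jx$ gives $\langle x', x\rangle \le \gamma$ for every $x \in K_\circ$. The crucial input is then the bipolar theorem in the pairing $(X, X')$: since $K$ is weak* (that is, $\sigma(X', X)$-) closed, convex, and contains the origin, its bipolar equals itself, i.e. $(K_\circ)^{\bullet} = K$, where $\bullet$ denotes the polar taken in $X'$. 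If $\gamma > 0$, then $\langle x'/\gamma, x\rangle \le 1$ for all $x \in K_\circ$ shows $x'/\gamma \in K$, so $x_0'' \in K^\circ$ forces $\langle x_0'', x'\rangle \le \gamma$, contradicting the strict inequality. If $\gamma = 0$, then $s x' \in K$ for every $s > 0$, so $\langle x_0'', s x'\rangle \le 1$ for all $s>0$ forces $\langle x_0'', x'\rangle \le 0$, again a contradiction. This would complete the proof.

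Two verifications I expect to be routine are that the $\sigma(X'', X')$-continuous linear functionals on $X''$ are exactly the elements of $X'$, and that a $\sigma(X', X)$-closed convex set is automatically $\sigma(X', X'')$-closed (the finer topology has more closed sets), so that the bipolar theorem applies to $K$ without modification. I expect the main obstacle to be not a single hard estimate but the careful bookkeeping of the three distinct pairings $(X, X')$, $(X', X'')$ and $(X'', X')$, and of which polar and which weak topology is meant at each step. The subtlest point is the homogeneous case $\gamma = 0$, where the separating functional yields only a one-sided cone condition rather than a genuine $\le 1$ bound; this is exactly where the scaling argument above is needed, and it is the detail I would be most careful to get right. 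As a sanity check, the classical Goldstine theorem is recovered by taking $K = B_{X'}$, for which $K_\circ = B_X$ and $K^\circ = B_{X''}$.
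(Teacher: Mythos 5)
Your proof is correct and follows essentially the same route as the paper: the easy inclusion via weak* closedness of $K^\circ$, then a contradiction argument using Hahn--Banach separation in $(X'',\sigma(X'',X'))$ followed by the bipolar theorem in the pairing $(X,X')$ to place the separating functional in $K$. The only cosmetic difference is that you handle the normalization of the separating level $\gamma$ (including the case $\gamma=0$) by hand, whereas the paper absorbs this into the cited form of the separation theorem, which directly yields $\sup_{y''\in C}\langle y'',x'\rangle\le 1<\langle x'',x'\rangle$.
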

	
	\begin{proof}
		The weak* closure $C$ of $K_\circ$ is contained in $K^\circ$ since $K^\circ$ is weak* closed. Arguing by contradiction, suppose there were $x'' \in K^\circ \setminus C$. Applying the Hahn-Banach theorem \cite[Thm. VIII.2.12]{We} to the dual pair $\left( X', X'' \right)$ gives us $x' \in X'$ such that
		\begin{equation} \label{eq:contradic}
			\sup_{y'' \in C} \langle y'', x' \rangle \le 1 < \langle x'', x' \rangle.
		\end{equation}
		In particular, $\langle x', x \rangle \le 1$ for all $x \in K_\circ$ so that $x' \in K$ by the bipolar theorem \cite[Satz VIII.3.9]{We} as $K$ is a weak* closed convex set containing the origin. But then $\langle x'', x' \rangle \le 1$ for all $x'' \in K^\circ$, which contradicts \eqref{eq:contradic}.
	\end{proof}
	
	\section{A dual space decomposition}
	
	This subappendix provides a result to decompose the dual of an $L^\i$-like function space into a direct topological sum that follows the limit of a given net of measurable subsets decomposing the underlying space. The limiting processes enables to isolate the mass that a finitely additive measure in the dual carries outside the underlying space, e.g., at infinity in the Euclidean setting.
	\\
	This result is formulated in greater generality than is strictly necessary for my application to $\BD(U)$ because abstraction simplifies the exposition. For example, \Cref{cor:DBdeco} carries over from $\BD(U)$ to $C_b(U)$ or $L^\i(U)$ without an essential change.
	\\
	The following notation is in force throughout this subappendix: Let $\left( \Omega, \mathcal{A}, \mu \right)$ be a measure space and $X$ be a real locally convex vector space. Let $L$ be a locally convex vector space of $\mathcal{A}$-measurable functions $u \colon \Omega \to X$ such that
	\begin{subequations}
			\begin{equation}
				\text{The space $L$ is barreled, e.g., Banach or Fréchet;} \label{eq:mondeco3}
			\end{equation}
			\begin{equation} 
				\text{For all $A \in \AA$, the map $L \to L \colon u \mapsto \chi_A u$ is defined, continuous;} \label{eq:mondeco1}
			\end{equation}
			\begin{equation} 
				\text{For all $u \in L$, the family $\chi_A u$ is bounded in $L$ as $A$ ranges over $\AA$.} \label{eq:mondeco2}
			\end{equation}
	\end{subequations}
	The (continuous) dual $L'$ need not be a function space, as it happens if $L = L^\i(0, 1)$. By definition as a space of linear functionals, the dual space $L'$ is separated by $L$, namely
	\begin{equation} \label{eq:sepaprop}
		\forall u' \in L' \setminus \left\{ 0 \right\} \, \exists u \in L \colon \langle u', u \rangle \ne 0.
	\end{equation}
	
	Here and in the following $\langle \cdot, \cdot \rangle$ refers to the dual pairing of $(L, L')$. The idea of proof in \Cref{prop:mondeco} is to study an arbitrary linear functional $\ell$ on a space of measurable functions by considering the auxiliary set function $\AA \to \R \colon A \mapsto \ell \left( \chi_A u \right)$ for a fixed element $u$. This goes back to Giner \cite[Ch. II]{Gn}, who employed it in the context of duality theory. Cf. also \cite[§3.5]{Ru}.
	
	\begin{proposition} \label{prop:mondeco}
		Let $\Omega_\alpha \in \mathcal{A}$ be an isotone (or antitone) net. The multiplication operators $M_\alpha \colon L \to L \colon u \mapsto \chi_{\Omega_\alpha} u$ are linear and continuous. Their adjoint operators $P_\alpha \colon L' \to L'$ converge pointwise in the weak topology $\sigma \left( L', L \right)$ to a linear and continuous projector $P \colon L' \to L'$. If $\| \cdot \|_L$ is a seminorm on $L$ satisfying the implication
		\begin{equation} \label{eq:normimp}
			\max\{ \| u \|_L, \| v \|_L \} \le 1 \land A \in \mathcal{A} \implies \| \chi_A u + \chi_{\Omega \setminus A} v \|_L \le 1,
		\end{equation}
		and $\| u' \|_{L'} \coloneqq \sup_{\| u \|_L \le 1} \langle u', u \rangle$ denotes the dual seminorm, then
		$$
		\| u' \|_{L'} = \| P u' \|_{L'} + \| \left( 1 - P \right) u' \|_{L'} \quad \forall u' \in L'.
		$$
	\end{proposition}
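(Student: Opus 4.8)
The plan is to build the projector $P$ out of the adjoints $P_\alpha$ by a convergence-of-set-functions argument, verify that it is an idempotent continuous operator, and then read off the norm identity from the gluing hypothesis \eqref{eq:normimp}. Each $M_\alpha = M_{\Omega_\alpha}$ is linear and continuous by \eqref{eq:mondeco1}, so its adjoint $P_\alpha$ exists and satisfies $\langle P_\alpha u', u \rangle = \langle u', \chi_{\Omega_\alpha} u \rangle$. Fixing $u' \in L'$ and $u \in L$, I would study the scalar set function $\nu_{u',u} \colon \AA \to \R$, $A \mapsto \langle u', \chi_A u \rangle$, in the spirit of Giner. Since $\chi_{A \cup B} u = \chi_A u + \chi_B u$ in $L$ for disjoint $A, B$ by \eqref{eq:mondeco1}, the set function $\nu_{u',u}$ is finitely additive, and by \eqref{eq:mondeco2} together with the continuity of $u'$ it is bounded. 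It therefore admits a Jordan decomposition $\nu_{u',u} = \nu^+ - \nu^-$ into nonnegative bounded finitely additive measures. Along an isotone (resp. antitone) net the real nets $\alpha \mapsto \nu^{\pm}(\Omega_\alpha)$ are monotone and bounded, hence convergent, so $\langle P_\alpha u', u \rangle = \nu_{u',u}(\Omega_\alpha)$ converges; this defines $\langle P u', u \rangle \coloneqq \lim_\alpha \langle P_\alpha u', u \rangle$, which is linear in $u$.

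To place $Pu'$ in $L'$, I would observe that $\{P_\alpha u'\}_\alpha$ is pointwise bounded on $L$, so barreledness \eqref{eq:mondeco3} and the Banach--Steinhaus theorem render it equicontinuous; the pointwise limit of an equicontinuous net of linear forms is continuous, giving $Pu' \in L'$ and $P_\alpha u' \to Pu'$ in $\sigma(L', L)$. For idempotency I compute, for fixed $\beta$, $\langle Pu', \chi_{\Omega_\beta} u \rangle = \lim_\alpha \langle u', \chi_{\Omega_\alpha \cap \Omega_\beta} u \rangle$; since $\Omega_\alpha \cap \Omega_\beta$ is eventually $\Omega_\beta$ in the isotone case and eventually $\Omega_\alpha$ in the antitone case, this yields $P_\beta P = P_\beta$ either way, and letting $\beta$ run through the net gives $P^2 = P$. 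Continuity of $P$ for the strong dual topology then follows because $\{M_A\}_{A \in \AA}$ is equicontinuous (again Banach--Steinhaus, via \eqref{eq:mondeco2} and \eqref{eq:mondeco3}), so the adjoint family $\{P_\alpha\}$ is strongly equicontinuous and its weak* pointwise limit $P$ stays continuous, polars of bounded sets forming a neighborhood base of $0$ that is weak* closed.

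The norm identity is the payoff of \eqref{eq:normimp}. The inequality $\| u' \|_{L'} \le \| Pu' \|_{L'} + \| (1 - P) u' \|_{L'}$ is mere subadditivity of the dual seminorm applied to $u' = Pu' + (1 - P) u'$. For the reverse direction, given $\e > 0$ I choose $u_1, u_2 \in L$ with $\| u_i \|_L \le 1$, $\langle Pu', u_1 \rangle \ge \| Pu' \|_{L'} - \e$, and $\langle (1 - P) u', u_2 \rangle \ge \| (1 - P) u' \|_{L'} - \e$, and glue them across the net by setting $w_\alpha \coloneqq \chi_{\Omega_\alpha} u_1 + \chi_{\Omega \setminus \Omega_\alpha} u_2$. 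By \eqref{eq:normimp} one has $\| w_\alpha \|_L \le 1$, whence $\langle u', w_\alpha \rangle \le \| u' \|_{L'}$ for every $\alpha$; on the other hand $\langle u', w_\alpha \rangle = \langle u', \chi_{\Omega_\alpha} u_1 \rangle + \langle u', \chi_{\Omega \setminus \Omega_\alpha} u_2 \rangle$ converges to $\langle Pu', u_1 \rangle + \langle (1 - P) u', u_2 \rangle$ as $\alpha \to \infty$, using that $(1 - P) u' = \lim_\alpha (1 - P_\alpha) u'$. Combining the two estimates and sending $\e \to 0$ delivers the reverse inequality and hence equality.

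The main obstacle is the construction of $P$ rather than the final identity: one must recognize the auxiliary set function as a bounded finitely additive measure and reduce its convergence along the net to monotone scalar nets through the Jordan decomposition, and then secure continuity of the limit via barreledness. Once $P$ is in hand, the decomposition is short, its only delicate point being the gluing step, where \eqref{eq:normimp} is invoked in exactly the form for which it was designed.
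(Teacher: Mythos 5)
Your proposal is correct and takes essentially the same route as the paper's proof: the same Giner-style auxiliary finitely additive set function $A \mapsto \langle u', \chi_A u \rangle$ with its Jordan decomposition and monotone convergence along the net, the same appeal to barreledness (Banach--Steinhaus) to keep the pointwise limit continuous, and the same gluing of near-maximizers via \eqref{eq:normimp} for the norm identity. One small slip that does not affect the conclusion: in the antitone case your computation actually yields $P_\beta P = P$ rather than $P_\beta P = P_\beta$, but the limit in $\beta$ of either expression is $P$, so $P^2 = P$ follows either way.
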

	
	\begin{proof}
		Each $M_\alpha$ and hence every $P_\alpha$ defines a linear and continuous operator by \eqref{eq:mondeco1}. Clearly, $M^2_\alpha = M_\alpha$, i.e., $M_\alpha$ projects and hence so does $P_\alpha$. Fix $u \in L$ and $u' \in L'$. \Cref{eq:mondeco1} allows to define the finitely additive set function $\nu \colon \mathcal{A} \to \R \colon A \mapsto \langle u', \chi_A u \rangle$, which is finite since the continuous functional $u'$ is bounded and \eqref{eq:mondeco2} holds. Consequently, $\nu$ has a positive part $\nu^+$ and a negative part $\nu^-$. If the net $\Omega_\alpha$ is isotone, then
		$$
		\nu^+ \left( \Omega_\alpha \right) \le \nu^+ \left( \Omega_\beta \right) \le \nu^+ \left( \Omega \right) < \i \quad \forall \alpha \le \beta
		$$
		so that $\lim_\alpha \nu^+ \left( \Omega_\alpha \right) = \sup_\alpha \nu^+ \left( \Omega_\alpha \right)$ and $\lim_\alpha \nu^- \left( \Omega_\alpha \right) = \sup_\alpha \nu^- \left( \Omega_\alpha \right)$. If $\Omega_\alpha$ is antitone, the suprema are to be replaced by infima. Putting these observations together obtains the pointwise convergence
		$$
		\lim_\alpha \langle P_\alpha u', u \rangle = \lim_\alpha \nu \left( \Omega_\alpha \right) = \lim_\alpha \nu^+ \left( \Omega_\alpha \right) - \lim_\alpha \nu^- \left( \Omega_\alpha \right) \quad \forall (u, u') \in L \times L'.
		$$
		The linear limit operator $P$ is a continuous by \cite[Cor. 7.1.4]{Ed} and \eqref{eq:mondeco3}. To see that $P$ projects, observe that for every $\alpha$,
		$$
		\chi_{\Omega_\alpha} u = 0 \implies \langle P_\beta u', u \rangle = 0 \quad \forall \beta \ge \alpha \implies \langle P u', u \rangle = 0.
		$$
		Therefore, as $\chi_{\Omega \setminus \Omega_\alpha} u \in L$ by \eqref{eq:mondeco1},
		$$
		\langle P u', u \rangle = \langle P u', \chi_{\Omega_\alpha} u \rangle = \langle P_\alpha P u', u \rangle \to \langle P^2 u', u \rangle
		$$
		as $\alpha$ surpasses any bound. The element $u$ being arbitrary, the functionals $Pu'$ and $P^2 u'$ define the same element of $L'$ for any $u'$. Addendum: for every $\e > 0$, there are $u, v \in L$ such that $\max\{ \| u \|, \| v \| \} \le 1$ and
		\begin{align*}
			\| u' \|_{L'}
			\le \| P u' \|_{L'} + \| \left( 1 - P \right) u' \|_{L'}
			& \le \langle Pu ', u \rangle + \langle \left( 1 - P \right) u', v \rangle + \e \\
			& = \lim_\alpha \langle u', \chi_{\Omega_\alpha} u + \chi_{\Omega \setminus \Omega_\alpha} v \rangle + \e \\
			& \le \| u' \|_{L'} + \e
		\end{align*}
		because $\| \chi_{\Omega_\alpha} u + \chi_{\Omega \setminus \Omega_\alpha} v \|_L \le 1$ by assumption.
	\end{proof}
	
	\begin{proposition} \label{prop:unideco}
		In the setting of \Cref{prop:mondeco}, if two isotone (or antitone) nets $\Omega_\alpha$ and $\Omega'_{\alpha'}$ are eventually contained in each other, meaning
		\begin{equation} \label{eq:evcontain}
			\begin{aligned}
				& \forall \alpha \, \exists \alpha'_0 \colon \alpha' \ge \alpha'_0 \implies \Omega_\alpha \subseteq \Omega'_{\alpha'}, \\
				& \forall \alpha' \, \exists \alpha_0 \colon \alpha \ge \alpha_0 \implies \Omega'_{\alpha'} \subseteq \Omega_\alpha,
			\end{aligned}
		\end{equation}
		(reverse inclusions for antitone nets), then $\lim_\alpha P_\alpha = \lim_{\alpha'} P_{\alpha'}$. In particular, if the net is isotone, each $\Omega_\alpha$ belongs to some system of measurable sets $\mathfrak{S} \subset \mathcal{A}$ and each set $S \in \mathfrak{S}$ is eventually contained in $\Omega_\alpha$, then the limit projector $P$ from \Cref{prop:mondeco} does not depend on the particular choice of such a net.
	\end{proposition}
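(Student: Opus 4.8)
The plan is to reduce the statement to the scalar finitely additive set functions $\nu \colon \AA \to \R$, $A \mapsto \langle u', \chi_A u \rangle$ introduced in the proof of \Cref{prop:mondeco}. Fix $u \in L$ and $u' \in L'$. Since $P_\alpha$ is the adjoint of $M_\alpha$, one has $\langle P_\alpha u', u \rangle = \langle u', \chi_{\Omega_\alpha} u \rangle = \nu(\Omega_\alpha)$, and by the computation in that proof $\lim_\alpha \langle P_\alpha u', u \rangle = \lim_\alpha \nu(\Omega_\alpha)$; likewise $\lim_{\alpha'} \langle P_{\alpha'} u', u \rangle = \lim_{\alpha'} \nu(\Omega'_{\alpha'})$, the very same $\nu$ appearing in both because it depends only on the pair $(u, u')$ and not on the chosen net. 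As convergence of the adjoints is, by definition, pointwise in $\sigma(L', L)$, it therefore suffices to establish $\lim_\alpha \nu(\Omega_\alpha) = \lim_{\alpha'} \nu(\Omega'_{\alpha'})$ for every fixed $(u, u')$; the operator identity $\lim_\alpha P_\alpha = \lim_{\alpha'} P_{\alpha'}$ then follows by ranging over all $u \in L$.

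To compare the two limits I would pass to the Jordan decomposition $\nu = \nu^+ - \nu^-$. Each of $\nu^{\pm}$ is a nonnegative finitely additive set function, hence monotone: $A \subseteq B$ implies $\nu^{\pm}(A) \le \nu^{\pm}(B)$. For isotone nets the limits in question are suprema, $\lim_\alpha \nu^{\pm}(\Omega_\alpha) = \sup_\alpha \nu^{\pm}(\Omega_\alpha)$, as already recorded in \Cref{prop:mondeco}. Now fix $\alpha$. By the first inclusion in \eqref{eq:evcontain} there is $\alpha'_0$ with $\Omega_\alpha \subseteq \Omega'_{\alpha'}$ for all $\alpha' \ge \alpha'_0$, so monotonicity gives $\nu^{+}(\Omega_\alpha) \le \nu^{+}(\Omega'_{\alpha'}) \le \sup_{\alpha'} \nu^{+}(\Omega'_{\alpha'})$. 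Taking the supremum over $\alpha$ yields $\sup_\alpha \nu^{+}(\Omega_\alpha) \le \sup_{\alpha'} \nu^{+}(\Omega'_{\alpha'})$, and the reverse inequality follows symmetrically from the second inclusion in \eqref{eq:evcontain}. Hence the two suprema coincide; the same argument applies verbatim to $\nu^-$, and subtracting gives $\lim_\alpha \nu(\Omega_\alpha) = \lim_{\alpha'} \nu(\Omega'_{\alpha'})$, as required.

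For antitone nets the only changes are that the limits become infima and the inclusions in \eqref{eq:evcontain} are reversed, so the identical two-sided comparison (with $\inf$ in place of $\sup$) carries over. Finally, for the ``in particular'' clause, let $\Omega_\alpha$ and $\Omega'_{\alpha'}$ be two isotone nets taking values in $\mathfrak{S}$ such that every $S \in \mathfrak{S}$ is eventually contained in each net. Then for any index $\alpha$ the set $\Omega_\alpha \in \mathfrak{S}$ is eventually contained in $\Omega'_{\alpha'}$, which is precisely the first line of \eqref{eq:evcontain}; the second line holds by exchanging the roles of the two nets. Thus the nets are eventually contained in each other, and the first part forces $\lim_\alpha P_\alpha = \lim_{\alpha'} P_{\alpha'}$, so the limit projector $P$ is independent of the chosen net.

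The argument is essentially routine once the reduction to $\nu^{\pm}$ is made. The only point demanding care is that the containments in \eqref{eq:evcontain} are merely \emph{eventual}, so monotonicity must be combined with the supremum (respectively infimum) over a cofinal tail rather than applied at matched indices; relatedly, one must keep $\nu^+$ and $\nu^-$ separate throughout, since $\nu$ itself is not monotone and a direct comparison of $\nu(\Omega_\alpha)$ with $\nu(\Omega'_{\alpha'})$ would not go through.
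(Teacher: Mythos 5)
Your proof is correct and follows essentially the same route as the paper: reduce to the scalar finitely additive set function $\nu(A) = \langle u', \chi_A u \rangle$, use monotonicity of the Jordan parts $\nu^{\pm}$ together with the eventual containments to identify the limits, and then conclude the operator identity from the separation of $L'$ by $L$. The only cosmetic difference is that you prove two-sided equality for each of $\nu^{+}$ and $\nu^{-}$ directly, whereas the paper derives one inequality for $\nu^{+}$ and the reverse for $\nu^{-}$ and then invokes symmetry, which amounts to the same argument.
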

	
	\begin{proof}
		The notation from \Cref{prop:mondeco} remains in force. \Cref{eq:evcontain} implies
		$$
		\forall \alpha \, \exists \alpha'_0 \colon \alpha' \ge \alpha'_0 \implies \nu^+ \left( \Omega_\alpha \right) \le \nu^+ \left( \Omega'_{\alpha'} \right) 
		$$
		for isotone nets $\Omega_\alpha$ and $\Omega'_{\alpha'}$ so that $\lim \nu^+ \left( \Omega_\alpha \right) \le \lim \nu^+ \left( \Omega'_{\alpha'} \right)$.	Switching the roles of $\Omega_\alpha$ and $\Omega'_{\alpha'}$, one analogously deduces $\lim \nu^- \left( \Omega_\alpha \right) \ge \lim \nu^- \left( \Omega'_{\alpha'} \right)$. Hence, in total,
		\begin{equation} \label{eq:nuineq}
			\lim \nu \left( \Omega_\alpha \right) \le \lim \nu \left( \Omega'_{\alpha'} \right).
		\end{equation}
		As $\Omega_\alpha$ and $\Omega_{\alpha'}$ have symmetric roles, equality follows in \eqref{eq:nuineq}. Consequently,
		$$
		\lim \langle P_\alpha u', u \rangle = \lim \langle P_{\alpha'} u', u \rangle \quad \forall (u, u') \in L \times L'
		$$
		so that $\lim P_\alpha = \lim P_{\alpha'}$ by \eqref{eq:sepaprop}. For antitone nets, reverse the inequalities. Addendum: if $\Omega_\alpha$ and $\Omega'_{\alpha'}$ are such nets, then $\Omega_\alpha \subset \Omega'_{\alpha'}$ eventually and vice versa so that the first part applies.
	\end{proof}
	
	\begin{lemma} \label{lem:unifdeco}
		Let $\mathfrak{S} \subset \mathcal{A}$ be a system of measurable sets and $\Omega_\alpha \in \mathfrak{S}$ be an isotone net that eventually contains any given $S \in \mathfrak{S}$. Moreover, let $\varphi_{\alpha'} \colon \Omega \to \left[ 0, 1 \right]$ be a net of measurable functions such that $\varphi_{\alpha'} \to 1$ uniformly on every $S \in \mathfrak{S}$ and $\left\{ \varphi_{\alpha'} > 0 \right\} \in \mathfrak{S}$. If $u \in L \implies \varphi_{\alpha'} u \in L$, then the net of operators $M'_{\alpha'} \colon L' \to L'$ defined as the adjoint of the multiplication operators $M_{\alpha'} \colon L \to L \colon u \mapsto \varphi_{\alpha'} u$ converges pointwise in the weak topology $\sigma \left( L', L \right)$ to the projector $P \colon L' \to L'$ associated with the net $\Omega_\alpha$ by \Cref{prop:mondeco}.
	\end{lemma}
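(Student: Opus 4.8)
The plan is to verify the defining condition of $\sigma(L',L)$-convergence directly, namely that $\langle M'_{\alpha'}u',u\rangle=\langle u',\varphi_{\alpha'}u\rangle\to\langle Pu',u\rangle$ for every fixed pair $(u,u')\in L\times L'$; by \eqref{eq:sepaprop} this is exactly what pointwise convergence of the adjoints amounts to. Following the Giner-style philosophy already used in \Cref{prop:mondeco}, I would fix $(u,u')$ and introduce the finitely additive set function $\nu\colon\mathcal A\to\R$, $\nu(A)=\langle u',\chi_A u\rangle$, which is well defined and of finite total variation by \eqref{eq:mondeco1}, \eqref{eq:mondeco2} and the continuity of $u'$, and let $\nu=\nu^+-\nu^-$ be its Jordan decomposition. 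From the proof of \Cref{prop:mondeco} (isotone case) I already have $\langle Pu',u\rangle=\lim_\alpha\nu(\Omega_\alpha)=\sup_\alpha\nu^+(\Omega_\alpha)-\sup_\alpha\nu^-(\Omega_\alpha)$. Since each $\Omega_\alpha\in\mathfrak S$ while every $S\in\mathfrak S$ is eventually contained in $\Omega_\alpha$, and since $\nu^\pm$ are monotone, these suprema coincide with $\sup_{S\in\mathfrak S}\nu^\pm(S)$; I denote them $p$ and $q$, so that $\langle Pu',u\rangle=p-q$.

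The technical heart is the integral representation $\langle u',\varphi_{\alpha'}u\rangle=\int_\Omega\varphi_{\alpha'}\,\mathrm d\nu$ against the finitely additive measure $\nu$. On simple multipliers this is literal, by linearity of the pairing, and carries the bound $\lvert\langle u',s\,u\rangle\rvert\le\lVert s\rVert_\infty\,\lVert\nu\rVert_{\mathrm{TV}}$; the representation for the bounded measurable $\varphi_{\alpha'}$ then amounts to passing this bound to uniform limits of simple functions, i.e.\ to the continuity of $g\mapsto\langle u',g\,u\rangle$ in the supremum norm of the multiplier $g$ on $\{g:g u\in L\}$. This is exactly the Giner-style correspondence between functionals on spaces of measurable functions and finitely additive integration developed in \cite[Ch.~II]{Gn} and \cite[§3.5]{Ru}, and it is the step I expect to be the main obstacle, since it is where the abstract hypotheses \eqref{eq:mondeco1}--\eqref{eq:mondeco2} on $L$ must be converted into genuine integrability of $\varphi_{\alpha'}$ rather than mere continuity of multiplication by a single indicator.

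With the representation in hand, the conclusion follows from a sandwich estimate on the two positive parts. For the upper bound I would use $\{\varphi_{\alpha'}>0\}\in\mathfrak S$ together with $0\le\varphi_{\alpha'}\le1$ to get $\int\varphi_{\alpha'}\,\mathrm d\nu^+\le\nu^+(\{\varphi_{\alpha'}>0\})\le p$ for every $\alpha'$. For the lower bound I would fix $S\in\mathfrak S$ and $\e>0$: since $\varphi_{\alpha'}\to1$ uniformly on $S$, eventually $\varphi_{\alpha'}\ge1-\e$ on $S$, whence $\int\varphi_{\alpha'}\,\mathrm d\nu^+\ge(1-\e)\nu^+(S)$, so that $\liminf_{\alpha'}\int\varphi_{\alpha'}\,\mathrm d\nu^+\ge(1-\e)\nu^+(S)$; taking the supremum over $S$ and letting $\e\to0$ gives $\liminf\ge p$, hence $\lim_{\alpha'}\int\varphi_{\alpha'}\,\mathrm d\nu^+=p$. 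The identical argument yields $\lim_{\alpha'}\int\varphi_{\alpha'}\,\mathrm d\nu^-=q$, and subtracting the two limits gives $\lim_{\alpha'}\langle u',\varphi_{\alpha'}u\rangle=p-q=\langle Pu',u\rangle$. As $(u,u')$ was arbitrary, this is precisely the claimed pointwise $\sigma(L',L)$-convergence $M'_{\alpha'}\to P$.
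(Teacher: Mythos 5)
Your proposal is correct and follows essentially the same route as the paper: both arguments reduce to the finitely additive set function $\nu(A)=\langle u',\chi_A u\rangle$ with Jordan decomposition $\nu^\pm$, represent $\langle u',\varphi_{\alpha'}u\rangle$ as the finitely additive integral $\int\varphi_{\alpha'}\,\mathrm{d}\nu$, and then sandwich $\int\varphi_{\alpha'}\,\mathrm{d}\nu^\pm$ between $\lim_\alpha\nu^\pm(\Omega_\alpha)$ from above (via $\{\varphi_{\alpha'}>0\}\in\mathfrak{S}$ being eventually contained in $\Omega_\alpha$) and from below (via uniform convergence $\varphi_{\alpha'}\to 1$ on each $S\in\mathfrak{S}$). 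The integral-representation step you single out as the main obstacle is in fact passed over silently in the paper, which simply writes $\nu^+(\varphi_{\alpha'})$ for the integral and identifies it with the pairing, so your explicit flagging of it (with the uniform-limit-of-simple-functions resolution) only makes the argument more careful, not different.
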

	
	\begin{proof}
		The notation from \Cref{prop:mondeco} remains in force. Let $\nu^+ \left( \varphi_{\alpha'} \right)$ denote the integral of $\varphi_{\alpha'}$ with respect to $\nu^+$. For any indices $\alpha$ and $\alpha'$,
		$$
		\nu^+ \left( \varphi_{\alpha'} \right)
		\le	\nu^+ \left( \Omega_\alpha \right) + \int_{\Omega \setminus \Omega_\alpha} \varphi_{\alpha'} \, \mathrm{d} \nu^+.
		$$
		As $\left\{ \varphi_{\alpha'} > 0 \right\} \in \mathfrak{S}$ for every fixed $\alpha'$, this set is eventually contained in $\Omega_\alpha$ so that taking the limit first in $\alpha$ using $\varphi_{\alpha'} \ge 0$ and then in $\alpha'$ gives
		\begin{equation} \label{eq:ineq1}
			\limsup \nu^+ \left( \varphi_{\alpha'} \right)
			\le \liminf \nu^+ \left( \Omega_\alpha \right).
		\end{equation}
		Also, as $\varphi_{\alpha'} \le 1$ and $\varphi_{\alpha'} \to 1$ uniformly on $\Omega_\alpha \in \mathfrak{S}$ for every fixed $\alpha$,
		$$
		\nu^+ \left( \Omega_\alpha \right)
		= \lim \int_{\Omega_\alpha} 1 - \varphi_{\alpha'} \, \mathrm{d} \nu^+
		+ \lim \int_{\Omega_\alpha} \varphi_{\alpha'} \, \mathrm{d} \nu^+
		\le \liminf \nu^+ \left( \varphi_{\alpha'} \right).
		$$
		Taking the limit in $\alpha$ gives
		\begin{equation} \label{eq:ineq2}
			\limsup \nu^+ \left( \Omega_\alpha \right)
			\le \liminf \nu^+ \left( \varphi_{\alpha'} \right).
		\end{equation}
		Putting \eqref{eq:ineq1} and \eqref{eq:ineq2} together shows that $\lim \nu^+ \left( \Omega_\alpha \right) = \lim \nu^+ \left( \varphi_{\alpha'} \right)$. Arguing analogously, one obtains the same for $\nu^-$ instead of $\nu^+$. Consequently,
		$$
		\lim \langle M'_{\alpha'} u', u \rangle = \lim \langle P_\alpha u', u \rangle = \langle P u', u \rangle \quad \forall u', u
		$$
		so that \eqref{eq:sepaprop} implies $\lim M'_{\alpha'} = P$.
	\end{proof}
	
	\begin{theorem} \label{thm:duadeco}
		Let $Y$ be a Banach space and $\Lambda \left( \mu ; Y \right)$ be the Banach space of all (equivalence classes of) measurable functions $u \colon \Omega \to Y$ that are $\mu$-a.e. equal to a bounded measurable function equipped with the essential supremum norm
		$$
		\| u \|_\i = \inf \left\{ \alpha > 0 \mid \| u(\omega) \|_Y \le \alpha \text{ for $\mu$-a.e. } \omega \in \Omega \right\}.
		$$
		Moreover, let $V \subset \Lambda \left( \mu ; Y \right)$ be a closed subspace, $\mathfrak{S} \subset \mathcal{A}$ be a system of measurable sets and $\Omega_\alpha \in \mathfrak{S}$ be an isotone net such that $S \subset \Omega_\alpha$ eventually for every $S \in \mathfrak{S}$. Let $V_0$ be the closure of functions $u \in V$ such that $\left\{ u \ne 0 \right\} \in \mathfrak{S}$ and $\varphi_{\alpha'} \colon \Omega \to \left[ 0, 1 \right]$ be a net of measurable functions inducing multiplication operators on $\Lambda \left( \mu ; Y \right)$ as in \Cref{lem:unifdeco} with the additional property that $u \in V \implies \varphi_{\alpha'} u \in V$.
		\\
		
		If $P \colon \Lambda \left( \mu ; Y \right)' \to \Lambda \left( \mu ; Y \right)'$ is the projector induced by the net $\Omega_\alpha$ according to \Cref{prop:mondeco}, then
		$$
		P_V \ell' = P \lambda' + V^\circ \text{ if } \ell' = \left[ \lambda' \right] = \lambda' + V^\circ,
		$$
		is a well-defined, linear and continuous projector $P_V \colon V' \to V'$. Moreover,
		\begin{equation} \label{eq:isometrisom}
			\ran(P_V) \cong V' / V_0^\circ \cong V'_0, \quad
			\ker(P_V) = V_0^\circ, \quad
			V' = \ran(P_V) \oplus_1 \ker(P_V),
		\end{equation}
		where $P_V$ induces an isometric isomorphism $\ran(P_V) \cong V'_0$ by unique norm preserving extension from $V_0$ to $V$ as an element of $\ran(P_V)$.
	\end{theorem}
	
	\begin{proof}
		Let $M_{\alpha'} \colon \Lambda \left( \mu; Y \right) \to \Lambda \left( \mu; Y \right) \colon u \to \varphi_{\alpha'} u$. Setting $P_{V, \alpha'}$ to be the adjoint of $\left. M_{\alpha'} \right|_V$ defines a linear, continuous operator. Since $\left. M_{\alpha'} \right|_V$ projects, $P_{V, \alpha'}$ also does. Moreover, as the adjoint $P_{\alpha'}$ converges pointwise in the weak* sense, so does $P_{V, \alpha'}$ and $P_V = \lim P_{V, \alpha'}$. It remains to prove the addenda.
		\\
		
		To prove the first of \eqref{eq:isometrisom}, if suffices to show that $\ran(P_V) \cong V' / V_0^\circ$ because $V' / V_0^\circ \cong V'_0$ is always true by \cite[Satz 3.1.10]{We}. As $\ker(P_V) = V_0^\circ$, we have a well-defined, linear, and continuous mapping
		$$
		Q_V \colon V' / V_0^\circ \to \ran(P_V) \colon u' + V_0^\circ \to P_V u'.
		$$
		The mapping is bijective by definition. Moreover, $\| Q_V \| = \| P_V \| \le \| P \| \le \liminf \| P_{\alpha'} \| \le 1$ shows that $Q_V$ is non-expansive. If $v' = P_V u'$, then
		$$
		Q_V \left( P_V u' + V_0^\circ \right) = P_V^2 u' + P_V V_0^\circ = P_V u' = v'
		$$
		because $P_V$ projects. Combining this with $\| P_V u' + V_0^\circ \|_{V' / V_0^\circ} \le \| P_V u' \|_{V'}$ demonstrates that $Q_V$ maps the unit ball onto the unit ball. In total, one concludes that it is the required isometric isomorphism.
		\\
		
		Regarding the second of \eqref{eq:isometrisom}, let $u' \in V'$ with $u' = \left[ \ell' \right]$ for $\ell' \in \Lambda \left( \mu ; Y \right)'$ and $u \in V$ with $\left\{ u \ne 0 \right\} \in \mathfrak{S}$. Then
		\begin{equation} \label{eq:annihi}
			\begin{aligned}
				\langle P_V u', u \rangle_{V', V}
				& = \langle P \ell', u \rangle_{\Lambda', \Lambda}
				= \lim_\alpha \langle \ell', \chi_{\Omega_\alpha} u \rangle_{\Lambda', \Lambda} \\
				& = \langle \ell', u \rangle_{\Lambda', \Lambda}
				= \langle u', u \rangle_{V', V}.
			\end{aligned}
		\end{equation}
		Therefore $\left( 1 - P_V \right) u' \in L^\circ$ so that $\ran \left( 1 - P_V \right) = \ker \left( P_V \right) \subset V_0^\circ$. Conversely, if $u' \in \ker \left( P_V \right)$, then \eqref{eq:annihi} shows $0 = \langle u', u \rangle_{V', V}$ for every $u \in V_0$ so that $V_0^\circ \subset \ker \left( P_V \right)$.
		\\
		
		The third of \eqref{eq:isometrisom} follows similarly to the corresponding direct sum decomposition in \Cref{prop:mondeco} because now, in analogy with \eqref{eq:normimp},
		\begin{equation*}
			\max\{ \| u \|_\i, \| v \|_\i \} \le 1 \implies \| \varphi_{\alpha'} u + \left( 1 - \varphi_{\alpha'} \right) v \|_\i \le 1 \quad \forall u, v \in V.
		\end{equation*}
		Finally, to see that $\ran(P_V) \cong V'_0$ by means of unique norm preserving extension, note that extending from $V_0$ to $\ran(P_V)$ is unique since $Q_V$ is an isomorphism and the equivalence class of any extension is mapped into the extension by $Q_V$ since $P_V$ projects. Moreover, no further norm preserving extension outside of $\ran(P_V)$ can exist because if $v' \in V'$ is an extension, then $\| v' \| = \| P_V v' \| + \| (1 - P_V) v' \| = \| v' \| + \| (1 - P_V) v' \|$ so that $v' = P_V v'$ belongs to $\ran(P_V)$.
	\end{proof}
	
	\begin{corollary} \label{cor:DBdeco}
		Let $U \subset \R^m$ be open and
		\begin{alignat*}{2}
			\BD_0(U)   & \coloneqq \cl \left\{ u \in \BD(U) \colon \supp(u) \subset\subset U \right\}, \\[0.3em]
			\BD_b(U)   & \coloneqq \cl \left\{ u \in \BD(U) \colon \supp(u) \text{ is bounded } \right\}, \\[0.3em]
			\BD'_\p(U) & \coloneqq \BD_0(U)^\circ \text{ in } \BD'_b(U), \\[0.3em]
			\BD'_\i(U) & \coloneqq \BD_b(U)^\circ \text{ in } \BD'(U).
		\end{alignat*}
		Then $\BD'_0(U)$ is a subspace of $\BD'_b(U)$ and $\BD'_b(U)$ is a subspace of $\BD'(U)$ by unique norm preserving extension and there hold the isometric isomorphisms
		$$
		\BD'_b(U) \cong \BD'_0(U) \oplus_1 \BD'_\p(U), \quad
		\BD'(U) \cong \BD'_b(U) \oplus_1 \BD'_\i(U).
		$$
		Moreover, the projection operator $P \colon \BD'_b(U) \to \BD'_0(U)$ can be obtained by choosing in \Cref{lem:unifdeco} the system of sets that are compactly contained in $U$. Similarly, the projection operator $P \colon \BD'(U) \to \BD'_b(U)$ can be obtained by choosing in \Cref{lem:unifdeco} the system of bounded sets contained in $U$.
	\end{corollary}
	
	\begin{proof}
		It suffices to check the addenda by \Cref{thm:duadeco}. First, to check $\BD'_b(U) \cong \BD'_0(U) \oplus_1 \BD'_\p(U)$, let $V = \BD_b(U)$, take $\mathfrak{S}$ as the system of sets that are compactly contained in $U$, and define
		\begin{gather*}
			U_n = \left\{ x \in U \colon \dist(x, U^c) > \tfrac{1}{n} \right\}, \\
			\varphi_n(x) = \left( 1 - 2 n \dist \left( x, U_n \right) \right)^+, \quad
			\Omega_n = \left\{ \varphi_n > \tfrac{1}{2} \right\}.
		\end{gather*}
		The non-decreasing sequence of open sets $\Omega_n$ converges to $U$ so that every relatively compact subset of $U$ is eventually contained in $\Omega_n$. Moreover, $\BD_0(U)$ is the closure of functions $u$ for which $\left\{ u \ne 0 \right\} \in \mathfrak{S}$ while the boundedness and Lipschitz continuity of $\varphi_n$ together with a product rule for elements of $\DM(U)$ proves that
		\begin{equation} \label{eq:sp inc}
			\varphi_n \BD(U) \subset \BD(U).
		\end{equation}
		Next, we claim that $\BD'(U) \cong \BD'_b(U) \oplus_1 \BD'_\i(U)$. To check this, let $V = \BD(U)$, take $\mathfrak{S}$ as the system of bounded sets in $U$, and define
		$$
		B_n = B_n(0), \quad \varphi_n(x) = \left( 1 - n \dist(x, B_n ) \right)^+, \quad \Omega_n = U \cap \left\{ \varphi_n > \tfrac{1}{2} \right\}.
		$$
		Then $\Omega_n$ is an isotone sequence of opens sets that eventually contains any $B_n \cap U$, hence contains any given bounded subset of $U$. Moreover, $\BD_b(U)$ is the closure of functions $u$ for which $\left\{ u \ne 0 \right\} \in \mathfrak{S}$ while \eqref{eq:sp inc} for the same reasons as before.
	\end{proof}
	
%	\section{Data availability statement}
%	
%	I do not analyse or generate any datasets, because my work proceeds within a theoretical and mathematical approach.

\end{document}